\newtheorem{theorem}{Theorem}            
\newtheorem{corollary}[theorem]{Corollary}
\newtheorem{lemma}[theorem]{Lemma}
\newtheorem{prop}[theorem]{Proposition}
\theoremstyle{definition}              
\newtheorem{definition}{Definition}
\theoremstyle{remark}                  
\newtheorem{remark}{Remark}
\newcommand{\R}{\mathbb{R}}                 
\newcommand{\weak}{\rightharpoonup}              
\renewcommand{\d}{\mathrm{d}}               
\newcommand{\vf}{\varphi}
\newcommand{\eps}{\varepsilon}
\newcommand{\cy}{\textup{cyl}}
\newcommand{\argmin}{\textup{argmin}}
\renewcommand{\exp}{\textup{exp}}
\renewcommand{\weak}{\rightharpoonup}
\DeclareMathOperator{\diam}{diam}                                   
\DeclareMathOperator{\curl}{\curl}                                   
\renewcommand{\div}{\mathrm{div}\,}                                 
\newcommand{\Vg}{\d\mathrm{vol}_g}
\newcommand{\Vge}{\d\mathrm{vol}_{g_\eps}}
\newcommand{\Vbge}{\d\mathrm{vol}_{\bar{g}_\eps}}
\newcommand{\dpu}{\coloneqq}
\newcommand{\abs}[1]{\left| #1 \right|}                             
\newcommand{\norm}[1]{\left\| #1 \right\|}                          
\numberwithin{equation}{section}
\numberwithin{definition}{section}
\numberwithin{theorem}{section}
\numberwithin{remark}{section}
\begin{document}

\title[]
{Heat flow of harmonic maps into CAT($0$)-spaces}

\author{Fang-Hua Lin}
\address{Courant Institute of Mathematical Sciences, New York University, NY 10012, USA}
\email{linf@math.nyu.edu}
\author{Antonio Segatti}
\address{Dipartimento di Matematica ``Felice Casorati'',
Universit\`a di Pavia, via Ferrata 1, 27100 Pavia, Italy}
\email{antonio.segatti@unipv.it}
\author{Yannick Sire}
\address{Johns Hopkins University, Krieger Hall, 3400 N. Charles St., Baltimore, MD, 21218, USA}
\email{ysire1@jhu.edu}
\author{Changyou Wang}
\address{Department of Mathematics, Purdue University, West Lafayette, IN 47907, USA}
\email{wang2482@purdue.edu}

\date{}

\begin{abstract} We introduce a new approach to prove the global existence and uniqueness of suitable weak solutions of the heat flow of harmonic mappings into CAT(0) metric spaces. Our method allows also to prove Lipschitz continuity in spatial variables for such solutions into any CAT$(0)$-space, answering a long-standing open problem in the field. Our approach is based on an elliptic regularization of the gradient flow of the Dirichlet energy and even in the case of smooth Riemannian targets provides a novel viewpoint, together with a new Dynamical Variational Principle and a new proof of the celebrated Eells-Sampson theorem. The spatial Lipschitz regularity for such weak solutions
is achieved by fully exploiting  the variational structure of the problem at the regularized level and introducing a parabolic frequency function of Almgren-Poon type. Our contribution is the first instance of the use of monotonicity methods for parabolic deformations of maps into singular targets. 
\end{abstract}

\maketitle
\tableofcontents

\section{Introduction}
This paper is devoted to a new approach to establish the existence of global weak solutions to the heat flow of harmonic maps into a CAT($0$)-space, which is a metric space generalization of a smooth manifold with non-positive sectional curvature. The seminal paper by Eells and Sampson \cite{eells-sampson} constructs harmonic maps between smooth manifolds by running the (negative) gradient flow of the Dirichlet energy functional and proving that whenever the maps take values into a non-positively curved Riemannian manifold, the flow is smooth, globally defined and converges (up to a subsequence) when $t\to +\infty$  to a minimizing harmonic map that is homotopic to the initial map.
It is almost impossible to establish the gradient flow for mappings into non-smooth target spaces directly. 
In the present paper, we introduce an alternative approach through a variational approximation scheme
that does not rely on the smoothness of target spaces. The approach can be viewed as an elliptic regularization of the gradient flow, 
which is based on the Weighted-Energy-Dissipation (WED) scheme developed by the second author with Rossi, Savar\'e and Stefanelli in \cite{rsss-wed}.

Before introducing our strategy, we introduce some notations. 
Let $(M,g)$ be a $n$-dimensional complete Riemannian manifold without boundary and $(X,d)$ be a {Polish (i.e. complete and separable)} metric space.
In the following sections, we concentrate on two situations for the space $X$: either it is a Nonpositively Curved space (NPC) or a smooth Riemannian manifold $N$ with nonpositive sectional  curvature. The theory of harmonic mappings into singular spaces was initiated by Gromov and Schoen in \cite{GS} and extended subsequently by Korevaar and Schoen \cite{KS,KS2} (see as well a different approach by Jost \cite{Jost1,Jost2}) to provide a flexible tool to prove geometric rigidity in various settings. In both cases (of smooth or singular targets), one can naturally define a Dirichlet energy associated to Lipchitz functions as follows (see Appendix \ref{appendixSpaces} for more details): 
\begin{equation}
\label{eq:diri_intro}
E(u):=
\begin{cases}
 \frac{1}{2}\int_M \abs{\nabla u}^2 \Vg, &\qquad u\in H^1(M, X)\\
  + \infty &\qquad \textrm{ otherwise in } L^2(M, X)
  \end{cases}
\end{equation}
with domain $D(E) = H^1(M, X)$, where the space $H^1(M,X)$ is described in the Appendix A (following \cite{KS,KS2}). 

For $\eps>0$, define the WED energy functional by
\begin{equation}
\label{eq:wed_intro_0}
\mathcal{I}_\eps(v)=\frac{1}{2}\iint_{M\times\R_+}\frac{e^{-t/\eps}}{\eps}\left(\eps\abs{\partial_t v}^2+\abs{\nabla v}^2\right)\Vg\d t,
\end{equation}
for all $v\in H^1_{\rm{loc}}(M\times\R_+, X)$ with finite $\iint_{M\times\R_+}(\abs{\partial_t v}^2 +\abs{\nabla v}^2)e^{-t/\eps}\Vg \d t$.

For any $u_{0}\in L^2_{{\rm{loc}}}(M, X)$ with finite Dirichlet energy $E(u)$, it is easy to see that
\begin{equation}
\label{eq:Hut0}
{\mathfrak{V}_{{u_{0}}}:= \left\{v\in H^1_{\textrm{loc}}(M\times\R_+; X): \,v(0)= {u_0},  
\,\,\iint_{M\times\R_+}(\abs{\partial_t v}^2 +\abs{\nabla v}^2)e^{-t/\eps}\Vg \d t<+\infty\right\}}\not=\emptyset.
\end{equation}
Hence, by the direct method, there exists at least one minimizer $u_\eps\in {\mathfrak{V}}_{{u_0}}$ of $\mathcal{I}_\eps(\cdot)$, i.e.,
\[
\mathcal{I}_\eps(u_\eps)=\min\Big\{\mathcal{I}_\eps(v): \ v\in {\mathfrak{V}}_{{u_0}}\Big\}.
\]
To have an intuition on the type of approximation we are considering, let us momentarily assume that $X$ is a smooth Riemannian manifold $(N,h)$ isometrically embedded in $\R^L$. 
It is not difficult to show that in this situation any minimizer $u_\eps$ is actually a solution of the Euler-Lagrange equation 
\begin{equation}
\label{eq:elliptic_intro}
\begin{cases}
-\eps\partial_t^2 u_\eps +\partial_t u_\eps -\Delta u_\eps \perp T_{u_\eps} N, &\qquad \textrm{ in }M\times (0,+\infty),\\
u_\eps(x,0) = {u}_0(x), & \qquad \textrm{ on } M.
\end{cases}
\end{equation}
Formally,  when $\eps\to 0$ one expects that $u_\eps$ converges to a solution $u$ 
of the heat flow of harmonic maps into $N$
\begin{equation}
\label{eq:heat_flow_intro1}
\begin{cases}
 \partial_t u -\Delta u \perp T_{u}N, &\qquad \textrm{ in }M\times (0,\infty),\\
 u(x,0) = {u}_0(x), &\qquad \textrm{ in }M.
\end{cases}
\end{equation}
Thus $u_\eps$ can be viewed as
an {\it elliptic regularization} of the heat flow of harmonic maps. Also, since 
$u_\eps$ is constructed {\sl variationally} as a minimizer of the energy $\mathcal{I}_\eps$,
we will often refer to this approximation as  a {\sl variational approximation}. 
We prove in this paper that the previous elliptic regularization indeed provides a solution of the heat flow  \eqref{eq:heat_flow_intro1} when the manifold $N$ is negatively curved. It provides  a new approach to investigate  the heat flow of harmonic maps between smooth manifolds. More importantly, this variational approximation happens to be very well suited even when the target $X$ is a metric space, with negative curvature in the sense of Alexandrov. 
\vspace{0.3cm}

The main purpose of the paper is to address the long-standing open problem of constructing suitable weak solutions to a parabolic deformation of harmonic maps into singular spaces, which enjoy (sharp) Lipschitz regularity in the space variable.  
Therefore from now on we will consider a class of singular spaces $X$ that we describe below. 
\begin{definition} The space $X$ is a complete and separable non-positively curved (NPC) metric space (also known as CAT($0$)-metric space). 
More precisely, the following two conditions hold (see also \cite{GS,KS,KS2}): 
\begin{itemize}
\item[(i)] $(X,d)$ is a complete length space: For any $P, Q\in X$, $d(P,Q)$ equals to the length of a rectifiable curve $\gamma:[0,1]\to X$
such that $\gamma(0)=P$ and $\gamma(1)=Q$. Such a $\gamma$ is called a minimal geodesic between $P$ and $Q$.

\item[(ii)] For any three points $P, Q, R\in X$, let $\gamma_{Q,R}$ be a minimal geodesic between $Q$ and $R$. For $0<\lambda<1$,
let $Q_\lambda\in\Gamma_{Q,R}$ be such that $d(Q_\lambda, Q)=\lambda d(Q,R)$ and $d(Q_\lambda, R)=(1-\lambda)d(Q,R)$, then the
following comparison inequality holds:
\begin{equation}\label{NPC1}
d^2(P, Q_\lambda)\le (1-\lambda)d^2(P,Q)+\lambda d^2(Q, R)-\lambda(1-\lambda)d^2(Q,R).
\end{equation}
\end{itemize}
\end{definition}
\vspace{0.5cm}

It follows from (i) and (ii) that for any pair of points in $P, Q\in X$ there is a unique, minimal geodesic $\gamma:[0,1]\to X$
joining $P$ and $Q$. Moreover, if, for a fixed point $Q\in X$ and a $\lambda\in [0,1]$,  
we define the contraction map $R_{\lambda, Q}: X\to X$ by letting
\[
R_{\lambda, Q}(P)=P(\lambda), \ \forall P\in X,
\]
where $P(t), t\in [0,1]$, is the constant speed geodesic joining $Q$ to $P$ parametrized on $[0,1]$. Then $R_{\lambda, Q}$ is Lipschitz, with Lipschitz norm at most $\lambda$, i.e.
\[
d(R_{\lambda, Q}(P_1), R_{\lambda, Q}(P_2))\le \lambda d(P_1, P_2), \ \forall P_1, P_2\in X.
\]
We recall that the CAT($0$) property of $X$ is inherited by $L^2(M, X)$ when endowed with the distance
\begin{equation}
\label{eq:metric_L2}
d_2(u, v):=\Big(\int_M d^2(u(x),v(x))\Vg\Big)^\frac12, \ \forall u, v\in L^2(M, X).
\end{equation}
We also recall that the CAT($0$) assumption guarantees that both the energy $E$ and $d^2_2$, and hence $\mathcal{I}_\eps$, satisfy suitable convexity properties along geodesics. 

We will tacitly identify curves $f:[0,\infty)\to L^2(M, X)$ with their concrete representation $f:M\times [0,\infty)\to X$ (with the same notation, for simplicity). 
In particular once we define the metric derivative $\abs{f'}(t)$ for an absolutely continuous curve with values in $L^2(M, X)$ (see Appendix \ref{appendixSpaces}) we will write
\[
\abs{f'}^2(t) = \int_M \abs{\partial_t f(x,t)}^2 \Vg, \qquad \textrm{ a.e. in }\,\,(0,\infty). 
\]
We also advise the reader that we will prefer the metric derivative notation in Sections \ref{sec:wed_hm} and \ref{sec:conve_general} while we will use the concrete realization when dealing with the regularity results (see Sections \ref{sec:mono} and \ref{sec:Lipregularity}). 
For any (separable) CAT($0$) space $(X,d)$, we will show that if $u_\eps\in\mathfrak{V}_u$ is a minimizer of $\mathcal{I}_\eps(\cdot)$,
then the limit of $u_\eps$, as $\eps\to 0$, will provide a suitable weak solution of the heat flow of harmonic maps
from $(M,g)$ to $(X,d)$. 

Here is the notion of weak solution we consider (see Appendix \ref{appendixSpaces} for the definition of absolutely continuous curve).

\begin{definition}\label{hfhm_def} For ${u}_0\in H^1(M, X)$, a curve $u:[0,+\infty)\to L^2(M,X)$ with 
$u\in AC^2([0,+\infty);L^2(M, X))\cap L^\infty(0,+\infty;H^1(M,X))$ 
%
%
%
is called a suitable weak solution to the heat flow of harmonic maps
into $(X,d)$ with initial value
${u}_0$, if $u\big|_{t=0}={u}_0$ 
and if $u$ is a solution of the Evolution Variational Inequality (EVI), 
\begin{equation}
\label{eq:EVI_intro}
\frac{1}{2}\frac{\d}{\d t}d_2^2(u(t), v) + E(u(t))\le E(v)\qquad \textrm{ in } \mathscr{D}'(0,+\infty), \,\,\,\textrm{for any }v\in D(E).
\end{equation}

\end{definition} 

This concept of solution enables us to encode, even in this non smooth framework, the concept of ``$L^2$-gradient flow'' 
of the Dirichlet energy. Moreover the recent results of Gigli and Nobili (see \cite{gigli_nobili21}) show that the gradient flow
solution $u$ in Definition \ref{hfhm_def} actually verifies, similarly to what happens for gradient flows in Hilbert spaces, a suitable differential inclusion relating the subdifferential of the energy $E$ and the time derivative (see \cite{gigli_nobili21} for the Definitions of subdifferential and of time derivative in this metric context). In particular 
when the target is a smooth Riemannian manifold with non positive sectional curvature a curve that satisfies Definition \ref{hfhm_def} is the (smooth) solution of the heat flow of harmonic maps \eqref{eq:heat_flow_intro1}.

One of the main features of the convergence result (already explored and used in \cite{rsss-wed}) is the value function $V_\eps$, a concept borrowed from the {\sl Optimal Control Theory}. The value function is the functional 
\begin{equation}
\label{eq:value_intro}
V_\eps (u) = \min_{v\in \mathfrak{V}_{u}} \mathcal I _\eps [v], \ u\in H^1(M, X).
\end{equation}
As a consequence of the NPC assumption on $(X,d)$, we have  that $V_\eps$ is a geodesically convex functional in $L^2(M, X)$. Moreover, the {\sl Dynamic Programming Principle} (as in \cite{rsss-wed}) gives that any minimizer $u_\eps$ of $\mathcal{I}_\eps$ is indeed a gradient flow for the functional $V_\eps$. In particular, in the convex regime the Value Function satisfies an {\sl Hamilton-Jacobi Equation} formally similar to the one satisfied by the value function in the smooth finite dimensional setting (see \cite{bardi-dolcetta}). 
Finally, the underlying geodesic convexity (both in the Dirichlet energy and in distance square of $(L^2(M, X),d_2)$ entails that the notion of gradient flow is indeed the {\sl strongest possible}, namely the Evolution Variational Inequality (EVI) formulation (see \cite{ags2005}).  
Once one proves that there is enough compactness to pass to the limit $\eps\to 0$, the identification of the limit $u$ as a suitable weak solution then follows by the properties of the Value function.

Now we are ready to state our first main theorem. 

\begin{theorem}[Existence of a Gradient Flow and H\"olderianity of solutions]
\label{th:main_intro_0}
Let $(X,d)$ be a CAT(0)-space and ${u}_0\in H^1(M,X)$. Then
\begin{enumerate}
\item  For any $\eps>0$, there exists a unique minimizer $u_\eps\in \mathfrak{V}_{\bar{u}}$ of $\mathcal{I}_\eps(\cdot)$, which is the unique solution with $u_\eps(0) = {u}_0$ of the Evolution Variational Inequality 
\begin{equation}
\label{eq:EVI_V_intro}
\frac{1}{2}\frac{\d}{\d t}d_2^2(u_\eps(t), v) + V_\eps(u_\eps(t))\le V_\eps(v)\qquad \textrm{ in } \mathscr{D}'(0,+\infty), \,\,\,\textrm{for any }v\in D(E).
\end{equation}
\item  There exists a unique suitable weak solution of the heat flow of harmonic maps 
$u: M\times\R_+\to X$, with initial value ${u}_0$,  such that 
$$u_\eps\xrightarrow{\eps\to 0}u \ \ {\rm{in}}\ \ L^2_{\rm{loc}}(M\times\R_+).
$$ 
\item Assume additionally that the space $X$ is locally compact. Then $u\in C^{\alpha}(M\times (0,\infty), X)$ for some $\alpha\in (0,1)$.
\end{enumerate} 
\end{theorem}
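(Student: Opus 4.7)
For part (1), the plan is to apply the direct method of the calculus of variations to $\mathcal{I}_\eps$ on the admissible class $\mathfrak{V}_{u_0}$. Coercivity is built into the WED structure through the exponential weight and the finite-energy constraint; lower semicontinuity along $\mathcal{I}_\eps$-bounded sequences follows from the Korevaar--Schoen lower semicontinuity of the Dirichlet energy together with a Fatou-type argument in time. Uniqueness is then a consequence of strict geodesic convexity of $\mathcal{I}_\eps$ on $(L^2(M\times \R_+,X), d_2)$, inherited from the NPC property: both $v\mapsto \int|\nabla v|^2\Vg$ and $v\mapsto d_2^2(v,w)$ are geodesically convex, so is $\mathcal{I}_\eps$, and the boundary constraint $v(0)=u_0$ is preserved along $L^2$-geodesics. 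For the EVI characterization, I would establish the Dynamic Programming Principle for $\mathcal{I}_\eps$: the shifted tail $t\mapsto u_\eps(t+s)$ minimizes $\mathcal{I}_\eps$ with initial datum $u_\eps(s)$. Combined with geodesic convexity of $V_\eps$ on $L^2(M,X)$ (again from the NPC structure, as in \cite{rsss-wed}), this places the problem in the Ambrosio--Gigli--Savar\'e metric gradient flow framework and yields existence and uniqueness of the EVI$(V_\eps)$ solution.

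For part (2), the first step is to extract uniform-in-$\eps$ estimates. Testing minimality against the constant-in-time competitor $v(t)\equiv u_0$ gives
\[
\iint_{M\times\R_+}\frac{e^{-t/\eps}}{\eps}\bigl(\eps|\partial_t u_\eps|^2+|\nabla u_\eps|^2\bigr)\Vg\,\d t \;\leq\; 2E(u_0).
\]
Unwinding the exponential weight on bounded time intervals and using the EVI \eqref{eq:EVI_V_intro} gives uniform bounds for $u_\eps$ in $L^\infty_t H^1_x\cap AC^2_{\mathrm{loc}}(\R_+;L^2(M,X))$. Compactness then follows from a Korevaar--Schoen type compact embedding for energy-bounded $X$-valued maps, combined with Ascoli--Arzel\`a on absolutely continuous $L^2(M,X)$-valued curves, producing a limit $u$ in $L^2_{\mathrm{loc}}(M\times\R_+)$. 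To identify $u$ as a suitable weak solution in the sense of Definition \ref{hfhm_def}, the key is to pass to the limit in \eqref{eq:EVI_V_intro}: I would show that $V_\eps$ Mosco-converges to $E$ as $\eps\to 0$, the liminf inequality coming from the trivial competitor bound $\mathcal{I}_\eps(v)\geq \tfrac{1}{2\eps}\int_0^\infty e^{-t/\eps}|\nabla v|^2\d t$ and lower semicontinuity, and the recovery sequence being produced by time-rescaling a fixed profile so it sits in the admissible class. The EVI then transfers to the limit, and uniqueness follows from the $d_2$-contractivity intrinsic to EVI solutions of geodesically convex functionals. The most delicate point is the recovery sequence for the Mosco limit, since candidates must lie in $\mathfrak{V}_{u_0}$ and have controlled time derivative.

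For part (3), H\"older regularity of $u$ is obtained by a parabolic adaptation of the Korevaar--Schoen/Serbinowski regularity theory for NPC-valued harmonic maps. The plan is to freeze time and compare $u(\cdot,t)$ locally with its harmonic replacement in $H^1(B_r,X)$, using the $L^2$-in-time control of $\partial_t u$ inherited from the EVI to estimate the defect. Combining the elliptic energy decay for harmonic maps into NPC targets with a Morrey-type bound on $\int_{t-r^2}^{t}\int_{B_r}|\partial_t u|^2$ yields decay of the parabolic mean oscillation on cylinders $B_r\times(t-r^2,t)$, which via a metric-valued Campanato characterization delivers the claimed $C^\alpha$ bound. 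Local compactness of $X$ is used to extract tangent maps in blow-up arguments and to ensure that the harmonic replacements converge. The main obstacle is the transfer of the elliptic comparison and compactness arguments to the parabolic setting without any smoothness on the target, where local compactness of $X$ plays the role of the locally compact ambient space used in the Lin--Wang type blow-up analysis.
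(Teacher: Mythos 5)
For parts (1) and (2) your route coincides with the paper's: direct method for existence, strict geodesic convexity of $\mathcal{I}_\eps$ (Theorem \ref{th:convexity_wed}, Theorem \ref{th:wp_minimum}) for uniqueness, the Dynamic Programming Principle and geodesic convexity of $V_\eps$ (Proposition \ref{prop:convex_V}, Theorem \ref{th:evi_V}) for the EVI characterization, and passing to the limit in the EVI via the monotone convergence $V_\eps\nearrow E$ together with the liminf inequality $E(u)\le\liminf V_\eps(u_\eps)$ (Lemma \ref{lem:value_function_basic}, items (3) and (4)), which is precisely the ``Mosco convergence'' you invoke. One point in your sketch is underspecified: the bound $\sup_t E(u_\eps(t))\le E(u_0)$, which you need for compactness, does \emph{not} follow merely from testing against the constant competitor (that only controls the exponentially weighted average, concentrated near $t=0$); it requires the geodesic convexity of $E$ via the convexity of $t\mapsto E(u_\eps(t))$, see Theorem \ref{th:energy_est}, \eqref{eq:convex_E_t} and \eqref{eq:better_energy}.

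For part (3) your approach is genuinely different from the paper's and contains a real gap. You propose to freeze time, form the NPC harmonic replacement of $u(\cdot,t)$ in small balls, estimate the defect by $\iint|\partial_t u|^2$, and run a Campanato iteration. The trouble is that the limit $u$ is only characterized by the EVI \eqref{eq:EVI_intro}: it does not a priori satisfy a pointwise equation $\partial_t u=\tau(u)$, so there is no equation to test against in order to control the replacement defect by the time derivative. You flag this as ``the main obstacle'' but offer no route around it. The paper sidesteps the issue entirely by deriving the key estimate at the $\eps$-level: outer variations of the \emph{minimizer} $u_\eps$ via the contraction map $R_{\lambda,Q}$ yield the $\mathcal{L}_\eps$-subharmonicity of $d^2(u_\eps,Q)$ (Lemma \ref{sub-harmonicity}), which passes to the limit as the sub-caloric inequality $(\partial_t-\Delta)d^2(u,Q)\le-2\mu_t$ (Lemma \ref{sub_caloric0}, inequality \eqref{sub_caloric}). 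H\"older continuity then follows from the parabolic Harnack inequality applied to the nonnegative supersolution $f(z)=\sup_{P_1}d^2(u,q_1)-d^2(u(z),q_1)$, combined with a covering/oscillation-decay argument \`a la Lin \cite{Lin1996}: under local compactness of $X$, if $u(P_1)$ is covered by $k$ small balls, then $u(P_{1/2})$ is covered by at most $k-1$ of them, forcing geometric decay of $\mathrm{diam}(u(P_r))$. The sub-caloric inequality \eqref{sub_caloric} is precisely the estimate your proposal is missing; without it, or a substitute extracted from the EVI alone, the harmonic-replacement defect cannot be controlled.
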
 
{
It is worth recalling that the existence of weak solutions to the harmonic map heat flow with values in a CAT$(0)$ metric space was established in the late 1990s by U. Mayer \cite{mayer} and, independently, by J. Jost \cite{Jost98}. In this setting, weak solutions are obtained as limits of De Giorgi’s {\sl Minimizing Movements Scheme}. These works are particularly interesting in that they show how the classical Crandall--Liggett generation theorem can be extended to the metric framework of CAT$(0)$ spaces, and they make clear that the interplay between the geometry of the target space—here encoded by the convexity assumption \eqref{NPC1}—and the convexity of the energy is a key ingredient in the existence theory for gradient flows.

Later, in the framework of their general theory of gradient flows in metric spaces, L. Ambrosio, N. Gigli, and G. Savaré shed light on the connection between the weak solutions constructed by Mayer and Jost and the notion of suitable weak solution considered in the present paper, namely the E.V.I. formulation. More precisely, the limit of De Giorgi’s {\sl Minimizing Movements Scheme} coincides with the unique suitable weak solution emanating from $u_0$; see \cite[Chapter 4]{ags2005}.

Against this background, the novelty of Theorem \ref{th:main_intro_0} is twofold. 
 On one hand we construct the unique suitable solution via a different approximation scheme {of elliptic variational nature} which is a novelty in the harmonic maps framework. This approximation scheme reveals to be { simple and} flexible enough to address the metric space setting. {In particular, the class of admissible domain manifolds can include Riemannian polyhedral manifolds and sub Riemannian manifolds such as Carnot groups, while the target spaces may be taken to be geodesic convex subsets of CAT($\kappa$)-metric space, for positive $\kappa$.} 
 On the other hand, this flexibility allows to prove the $C^\alpha$ regularity in space and time for the suitable weak solution.}
Motivated by the regularity theory of Eells-Sampson \cite{eells-sampson} valid for smooth targets with non positive sectional curvature, one expects that $u$ might reach the higher regularity possible with non smooth targets, namely is Lipschitz continuous in 
$x$-variable. We will confirm this { for all CAT(0)-spaces $(X,d)$ at least when $(M,g)=(\R^n, dx^2)$}.
First, we observe that since $u_\eps$ minimizes the WED functional $\mathcal{I}_\eps$, we can use the contraction
map $R_{\lambda, Q}:X\to X$  to perform outer variations of $u_\eps$ on $X$ to achieve a perturbed
{\it sub-harmonicity} of $d^2(u_\eps, Q)$ for the operator $\mathcal{L}_\eps=\eps \partial^2_t+\Delta-\partial_t$; 
while performing the domain (or inner) variations of $u_\eps$ on $(M,g)$ yields a perturbed version of stationarity identity. 
Based on these observations, we introduce a parabolic version of Almgren's frequency function \cite{Almgren}, analogous to that 
introduced by Poon \cite{Poon} for studying the unique continuation property of parabolic equations, and establish its monotonicity property. 
\vspace{0.2cm} 

\begin{remark}
As it will be clear in subsequent sections, Almgren-Poon monotonicity formula for the frequency function relies on the NPC condition of $(X,d)$.
On the other hand, we can derive by means of domain variations a useful Struwe monotonicity formula (i.e. monotonicity of renormalized energy of parabolic type), 
for {\sl any} target metric space $(X,d)$ (in particular CAT$(k)$ spaces for $k>0$ or smooth manifolds with positive curvature). This may lead to an alternative, WED-approach
to establish both existence and partial regularity of {\it stationary} weak solutions of heat flow of harmonic maps { into smooth Riemannian manifolds}.  
We plan to address this problem in a future work.
\end{remark}

Because of technical reasons, we will only focus on $(M, g)=(\R^n, dx^2)$,  the Euclidean space
$\R^n$,  for the next Theorem. From Theorem \ref{th:main_intro_0},
there exists a nonnegative Radon measure $\nu=\nu_t \d t$ on $\R^{n+1}_+$ such that
\[
e_\eps(u_\eps)\d x\d t\rightharpoonup \mu=\mu_t\d t\equiv (|\nabla u|^2\d x+ \nu_t)\d t
\]
as convergence of Radon measures on $\R^{n+1}_+$, where $e_\eps(u_\eps)=\eps |\partial_t u_\eps|^2+|\nabla u_\eps|^2$.

For $z_0=(x_0, t_0)\in\R^n\times (0,\infty)$, let $G_{z_0}$ be the backward heat kernel on $\R^n$ with center $z_0$, given by
\[
G_{z_0}(x,t)=(4\pi |t_0-t|)^{-\frac{n}2} \exp\big(-\frac{|x-x_0|^2}{4(t_0-t)}\big), \ \ \ x\in\R^n, \ t<t_0.
\]
For any fixed $Q\in X$, as in \cite{Poon} we define
\[
E(u; z_0, R)=2R^2\int_{\R^n\times\{t_0-R^2\}} G_{z_0}(x,t)\,d\mu_t(x),
\]
\[
H(u; z_0, R, Q)=\int_{\R^n\times\{t_0-R^2\}} d^2(u(x,t),Q)G_{z_0}(x,t)\,dx
\]
and the frequency function
\[
N(u; z_0, R,Q)=\frac{E(u; z_0, R)}{H(u; z_0, R, Q)}
\]
for $0<R<\sqrt{t_0}$, provided the denominator is not zero.

In section 3, we show that $N(u; z_0, R, Q)$ is monotonically increasing for $0<R<\sqrt{t_0}$ so that
\[
N(u; z_0, Q)=\lim_{R\to 0^+} N(u; z_0, R, Q)
\]
exists and is upper semi-continuous in $\R^{n+1}_+$. It turns out that if we choose $Q=u(z_0)$, 
then the quantity $N(u,z_0, u(z_0))$ is closely related to the (spatial Lipschitz) regularity of $u$ near $z_0$.
We will show in section 5.2 below that 
\begin{equation}\label{frequency-low-bound}
N(u; z_0, u(z_0))\ge 1, \ \forall z_0\in \R^n\times (0,\infty). 
\end{equation}
This leads to the second main theorem { addressing Lipschitz regularity of suitable harmonic heat flows}. 
\begin{theorem}[Spatial Lipschitz regularity of solutions] \label{Lip_reg} 
Let $(X,d)$ be any CAT$(0)$ metric space. If
$u_0:\R^n\to X$ satisfies $E(u_0)<\infty$ and  $d(u_0, Q)\in L^\infty(\R^n)$ for some $Q\in X$,
then the solution $u$ of the heat flow of harmonic maps obtained by 
Theorem \ref{th:main_intro_0} is Lipschitz continuous in $x$ and H\"older continuous with exponent 
$\frac12$ in $t$ on $\R^n\times (0,\infty)$.
\end{theorem}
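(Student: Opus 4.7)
My plan is to establish uniform-in-$\eps$ monotonicity of a parabolic Almgren--Poon frequency for the regularized minimizers $u_\eps$, pass to the limit $\eps \to 0$, and then use a lower bound on the frequency at vanishing scales to extract the Morrey-type decay that yields spatial Lipschitz regularity. The first step is to derive two variational identities at the $\eps$-level. Testing $\mathcal{I}_\eps$ against the outer-variation competitor $R_{\lambda, Q}(u_\eps)$ and exploiting the $\lambda$-Lipschitz contraction together with the NPC inequality gives, in the weighted sense, a perturbed subharmonicity
$$\mathcal{L}_\eps\, d^2(u_\eps, Q) \ge 2\, e_\eps(u_\eps) + \text{(terms vanishing with $\eps$)},$$
where $\mathcal{L}_\eps = \eps\, \partial_t^2 + \Delta - \partial_t$ and $e_\eps(u_\eps) = \eps|\partial_t u_\eps|^2 + |\nabla u_\eps|^2$. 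Inner (domain) variations, available regardless of target smoothness, produce the stationarity identity needed to control the mixed terms arising when one differentiates the frequency.

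With these identities in hand, I define a regularized frequency $N_\eps(u_\eps; z_0, R, Q) = E_\eps/H_\eps$ patterned on Poon's Gaussian-weighted functionals. Differentiating $\log N_\eps$ in $R$ and inserting the outputs of the previous step leaves a nonnegative Cauchy--Schwarz deficit plus error terms that disappear with $\eps$. Using the $L^2_{\rm{loc}}$ convergence of $u_\eps$ from Theorem \ref{th:main_intro_0} together with the weak convergence $e_\eps(u_\eps)\, \d x\, \d t \rightharpoonup |\nabla u|^2\, \d x\, \d t + \d\nu_t\, \d t$, I pass to the limit to recover monotonicity of $R \mapsto N(u; z_0, R, Q)$, so that $N(u; z_0, Q) := \lim_{R \to 0^+} N(u; z_0, R, Q)$ exists and is upper semicontinuous.

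The heart of the argument, and the step I expect to be the main obstacle, is showing $N(u; z_0, u(z_0)) \ge 1$ pointwise. This is where hypothesis \textbf{(H)} becomes decisive: after a parabolic rescaling about $z_0$, \textbf{(H)} forces $d$ and the induced Euclidean distance $|\cdot - u(z_0)|$ to agree to first order, so the blow-up profile behaves like an $\R^L$-valued solution of the standard heat equation, for which non-constant linear profiles through $u(z_0)$ achieve exactly $N = 1$. Combined with the NPC inequality, which prevents $d^2(u, u(z_0))$ from exhibiting nontrivial sublinear behavior near $z_0$, this rules out $N < 1$. The delicate points are to carry the $\eps$-approximate identities rigorously through the blow-up without losing constants, to control the contribution of the defect measure $\nu_t$, and to secure enough compactness of the rescaled maps to identify a meaningful limit.

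The monotonicity of $N$ together with $N \ge 1$ then feeds a Gr\"onwall-type estimate in $\log R$ that forces $H(u; z_0, R, u(z_0)) \le C R^{2}$ uniformly for $z_0$ in compact subsets of $\R^n \times (0,\infty)$. Unpacking the definition of $H$ yields a parabolic Campanato-type control on the heat-kernel-weighted oscillation of $u$ around $u(z_0)$, which via a Campanato embedding adapted to the backward heat kernel delivers Lipschitz continuity in $x$ uniformly in $t$ away from $t=0$. The H\"older-$1/2$ regularity in $t$ then follows by combining the spatial Lipschitz bound with the gradient-flow $AC^2$ bound on $|u'|$ and a local energy argument, through the standard parabolic interpolation $d(u(x,t_1), u(x,t_2)) \le C |t_1 - t_2|^{1/2}$.
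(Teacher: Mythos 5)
Your overall strategy is the same as the paper's: outer variations of $\mathcal{I}_\eps$ give $\mathcal{L}_\eps$-subharmonicity of $d^2(u_\eps,Q)$, inner variations give a stationarity (Struwe-type) identity, these feed an Almgren--Poon frequency whose monotonicity yields a lower bound $N\ge 1$ via the pinching hypothesis \textbf{(H)}, and integration of the $\log H$ ODE produces the Morrey decay giving spatial Lipschitz continuity; the $C^{1/2}_t$ regularity then follows from $\partial_t u\in L^2$ plus the spatial bound. Two points deserve comment.

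First, a structural difference that is worth caring about: you propose to prove monotonicity of a regularized frequency $N_\eps(u_\eps;\cdot)$ at the $\eps$-level and then pass to the limit. The paper instead sends $\eps\to 0$ in the sub-caloric inequality and the Struwe identity \emph{first} (Lemmas \ref{sub_caloric0} and \ref{struwe_mono}), and only then proves frequency monotonicity directly for the limit $u$ together with the defect measure $\mu_t$ (Theorem \ref{frequency_mono}). This order matters: the $\eps$-level Struwe identity \eqref{stationarity06} carries terms like $\iint \eps\,\partial_t u_\eps\cdot(x\cdot\nabla u_\eps)\,G_t$ and boundary terms in $\eps\,\partial_t u_\eps$ that do not have a sign and make a clean monotonicity of $N_\eps$ at fixed $\eps>0$ problematic. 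Your proposal acknowledges ``error terms that disappear with $\eps$'' but does not explain how to control them uniformly inside the $\log N_\eps$ derivative; the paper avoids this entirely by working at $\eps=0$. Your route is not necessarily wrong, but it requires extra estimates that are not present in your sketch and are not actually needed.

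Second, there is a genuine gap in your key step $N(u;z_0,u(z_0))\ge 1$. You argue via a blow-up: after parabolic rescaling, \textbf{(H)} replaces $d$ by the Euclidean distance and the blow-up profile is a non-constant linear map, for which $N=1$ by direct computation. This works only where $u$ is approximately differentiable with $\nabla u(z_0)\ne 0$; if $\nabla u(z_0)=0$ the spatial part of the linear profile vanishes and the frequency of the profile is \emph{not} $1$ -- the numerator of Lemma \ref{frequency1} vanishes identically, so the blow-up gives no information. The paper handles this via an augmentation trick (Lemma \ref{lowerboundfreq01}): replace $X$ by $\widehat X = X\times\R^n$ and $u$ by $u^\delta=(u,\delta x)$, which has $\nabla u^\delta(z_0)=(0,\delta\mathbb I_n)\ne 0$, apply the non-degenerate case, and send $\delta\to 0$. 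Without an argument of this type your proof of $N\ge 1$ is incomplete. The invocation of ``the NPC inequality, which prevents $d^2(u,u(z_0))$ from exhibiting nontrivial sublinear behavior'' does not substitute for this: NPC (via sub-harmonicity) is what gives \emph{monotonicity} of $N$ and the Cauchy--Schwarz inequality in its derivative, not the lower bound $N\ge1$.
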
 
{
To the best knowledge of the authors, the introduction of parabolic Almgren-Poon type frequency functions into the study of harmonic heat flows is new even for smooth NPC target manifolds.
This technique has potential applications to other geometric flow problems. Let us mention that it can be used to study the rank zero and rank one sets for the harmonic heat flows into CAT(0)-spaces, and another application is related  to the coupling dynamics of free sharp interfaces and bulk maps in the vectorial phase transition problems.}{ We plan to address these topics in the near future. }

{We would like to point that when the CAT(0) space $(X,d)\hookrightarrow\R^L$ is an Euclidean building that was studied by Gromov-Schoen \cite{GS}, \eqref{frequency-low-bound}
can be obtained by utilizing the almost everywhere approximate differentiability property for $H^1$-maps into $\R^L$.  On the other hand, for an arbitrary CAT(0)-space $(X,d)$,
we establish \eqref{frequency-low-bound} by
adapting the following two properties (see Theorem \ref{lowerboundfreq} below):
\begin{enumerate}
\item [i)] Kirchheim's metric differentiability theorem (\cite{Kirchheim1994}) asserting that any Lipschitz map $f:\R^L\to (X,d)$ is metric differentiable for a.e. $x_0\in \R^L$,
i.e. there exists a semi-norm ${\rm{md}}_{x_0}(f):\R^n\to \R_+$ such that
\begin{equation}\label{md0}
d(f(x), f(x_0))={\rm{md}}_{x_0}(f)(x-x_0)+o(|x-x_0|),
\end{equation}
\item [ii)] Gigli-Tyulenev's approximate metric differentiability theorem (\cite{GigliTyulenev}) asserting that given any $H^1$-map $f:\R^L\to (X,d)$, it holds that
for a.e. $x_0\in\R^L$, 
\begin{equation}\label{app-diff20}
\lim_{r\to 0} \frac{\Big|\big\{y\in B_r(x_0):  |d(f(x), f(x_0))-{\rm{md}}_{x_0}(f)(x-x_0)|>\varepsilon |x-x_0|\big\}\Big|}{\omega_n r^n}=0, \ \forall \varepsilon>0.
\end{equation}
\end{enumerate}
}

{We now comment on important examples of CAT$(0)$ spaces. In the smooth case, classical examples of CAT(0) spaces are Riemannian symmetric spaces of non-compact type. In a singular setting, the following structures are prominent examples (see the classical book by Bridson and Haefliger \cite{bridson-haefliger} for more details on the geometric models):

\begin{itemize}
\item Euclidean buildings as described in  Gromov-Schoen \cite{GS} which are locally finite Riemannian simplicial complexes. Many important results on geometric rigidity have been considering such spaces (see also the works of Corlette, Daskalopoulos-Mese e.g. \cite{GS,corlette,DM1,DM2,DM3} and references therein).
\item Homogeneous trees (also known as $\mathbb R$-trees). Those are discrete analogues of hyperbolic spaces (they are $0-$hyperbolic spaces in the sense of Gromov \cite{Gromov1987}).  An homogeneous tree of degree $Q$ is an infinite connected graph with no loops, in which every vertex is adjoint to $Q$ other vertices (see Figure~\ref{fig:Tree}).
\end{itemize} 
 
\begin{figure}[h!]
 \centering
    \includegraphics[height=40mm]{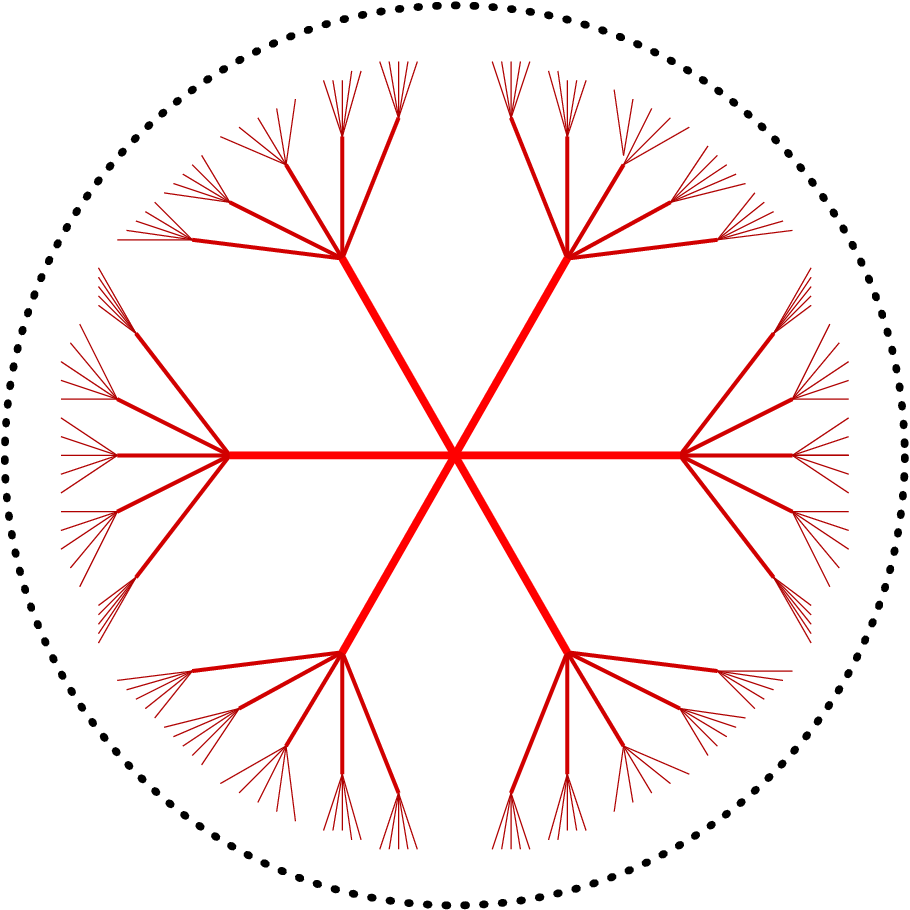}
    \caption{The homogeneous tree with $Q=6$}
 \label{fig:Tree}
\end{figure}

Harmonic maps into real trees have been considered by Sun \cite{Sun-Xiao}. As mentioned in \cite{Sun-Xiao}, $\mathbb R$-trees are limits in the Gromov-Hausdorff sense of NPC spaces whose curvature goes to negative infinity. Real trees are also instrumental in gauge theory and the construction of Higgs bundles after Corlette \cite{corlette-bundles} and the works of Daskalopoulos, Dostoglou and Wentworth \cite{DDW}. 
Interestingly, we would like also to point out that harmonic maps into $\mathbb R$-trees, which we can also describe as in Caffarelli and the first author \cite{caffa-lin}, as an Euclidean splitting  $\mathbb R^L= \bigoplus_k \mathbb R^{L_k}$, where the subspaces $\mathbb R^{L_k}$ are orthonormal subspaces,  appear naturally in shape optimization, spectral geometry (optimal partitions) and segregation problems (see e.g. the works of Caffarelli and the first author, Conti-Terracini-Verzini \cite{caffa-lin,caffa-lin-2,caffa-lin-3,conti-terra-verzini,conti-terra-verzini2} and several subsequent papers). In those PDEs, the components of the map have disjoint supports, e.g. 
$$
X=\left \{ x=(x_1,...,x_L ) \in \mathbb R^L,\,\, x_i\,x_j=0,\,\,i \neq j,\,\, x_i\geq 0 \right \}. 
$$}

For any CAT(0) space $(X,d)$, we also believe that the minimizers $u_\eps$ in Theorem \ref{th:main_intro_0} enjoy the spatial Lipschitz regularity in $\R^n\times (0, \infty)$, uniformly in $\eps$.  We confirm it when $(X,d)$ is induced by a 
smooth closed Riemannian manifold $(N, h)$ with non-positive sectional curvature, that is, 
$d$ is the geodesic distance function $d_N$ on $N$ induced by the Riemannian metric $h$. Without loss of generality, we assume
that $(N,h)$ is isometrically embedded in $\R^L$. We prove the following result, which provides yet an alternative proof of Eells-Sampson theorem. 
\begin{theorem}
\label{th:main_intro}
Let $(N,h)\hookrightarrow \R^L$ be a compact Riemannian manifold without boundary
 and with nonpositive sectional curvatures. 
For $u_0\in H^1(M,N)$ and $\eps>0$, let $u_\eps:M\times [0,\infty)\to N$ be the unique minimizer of WED functional
$\mathcal{I}_\eps$ over $\mathfrak{V}_{u_0}:=\Big\{v: M\times [0,\infty) \to N\ | \ \mathcal{I}_\eps(v)<+\infty, \ v\big|_{t=0}=u_0\Big\}$.
Then for any $t_0>0$, there holds
\begin{equation}\label{gradient_est} 
e_\eps(u_\eps)(x,t):=\big(\eps|\partial_t u_\eps|^2+|\nabla u_\eps|^2\big)(x,t)\le C(E(u_0), n, t_0), \ \forall (x,t)\in M\times [t_0,\infty).
\end{equation}
In particular, $u_\eps\xrightarrow{\eps\to 0}u$ in $L^2_{\rm{loc}}\cap C^\frac12_{\rm{loc}}(M\times \R_+, N)$, and the map $u\in C^\infty(M\times (0,\infty), N)$
is the unique solution of the heat flow of harmonic maps to $(N,h):$
\begin{equation}
\label{eq:heat_flow_intro}
\begin{cases}
 \partial_t u -\Delta u \perp T_{u}N, &\qquad \textrm{ in }M\times (0,\infty),\\
 u(x,0) = {u}_0(x), &\qquad \textrm{ in }M.
\end{cases}
\end{equation}
\end{theorem}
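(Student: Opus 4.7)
The strategy is to view $u_\eps$ as a weighted harmonic map into $(N,h)$ from the $(n+1)$-dimensional Riemannian manifold $(M\times\R_+,g_\eps)$ with $g_\eps:=g\oplus\eps^{-1}dt^2$, apply the Bochner--Weitzenb\"ock identity to exploit the nonpositivity of the sectional curvature of $N$, and convert the resulting subelliptic inequality for $e_\eps(u_\eps)$ into the uniform pointwise bound \eqref{gradient_est} by a Moser iteration that is robust as $\eps\to 0$. Once that bound is in hand, the passage to the Eells--Sampson heat flow is essentially classical.

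A direct computation shows that with the positive smooth weight $\rho_\eps(x,t):=\eps^{-1/2}e^{-t/\eps}$ one has $\mathcal{I}_\eps(v)=\tfrac12\int_{M\times\R_+}\rho_\eps\,|\nabla_{g_\eps}v|^{2}_{g_\eps}\,d\mathrm{vol}_{g_\eps}$, so the Euler--Lagrange equation for $u_\eps$ reads $\mathcal{L}_\eps u_\eps\perp T_{u_\eps}N$, where $\mathcal{L}_\eps:=\Delta_{g_\eps}+\nabla_{g_\eps}\log\rho_\eps\cdot\nabla_{g_\eps}=\Delta_M+\eps\partial_t^{2}-\partial_t$. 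Since $\mathcal{L}_\eps$ is uniformly elliptic in $(x,t)$ with a nonlinearity quadratic in $du_\eps$, Schauder bootstrap yields $u_\eps\in C^\infty(M\times(0,\infty),N)$. The Bochner--Weitzenb\"ock identity for $\rho_\eps$-weighted harmonic maps then gives
\begin{equation*}
\tfrac12\mathcal{L}_\eps e_\eps(u_\eps)=|\nabla du_\eps|^{2}+\mathrm{Ric}^{g_\eps}_{\rho_\eps}(du_\eps,du_\eps)-R^{N}(du_\eps,du_\eps,du_\eps,du_\eps).
\end{equation*}
Because $g_\eps$ is flat in the $\partial_t$-direction and $\log\rho_\eps$ is affine in $t$, the Bakry--\'Emery correction $\mathrm{Hess}_{g_\eps}\log\rho_\eps$ vanishes, so $\mathrm{Ric}^{g_\eps}_{\rho_\eps}$ reduces to $\mathrm{Ric}^{g}$ on $M$-directions and to $0$ on $\partial_t$; the target curvature term is nonnegative by the sectional curvature assumption on $N$. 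Under a Ricci lower bound $\mathrm{Ric}^g\ge -Kg$ (implicit in the constant of \eqref{gradient_est}) one arrives at $\mathcal{L}_\eps e_\eps(u_\eps)\ge -2Ke_\eps(u_\eps)$, i.e., in divergence form, $-\mathrm{div}_{g_\eps}\bigl(\rho_\eps\nabla_{g_\eps}e_\eps\bigr)\le 2K\rho_\eps e_\eps$.

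To upgrade this to a sup bound, plug the static competitor $v\equiv u_0$ into the minimality inequality: $\mathcal{I}_\eps(u_\eps)\le\mathcal{I}_\eps(u_0)=E(u_0)$ yields the weighted $L^1$ bound $\int_0^\infty\tfrac{e^{-t/\eps}}{\eps}\int_M e_\eps(u_\eps)\,\Vg\,dt\le 2E(u_0)$. The parabolic rescaling $(y,\tau):=(x,t)/\sqrt\eps$ converts $g_\eps$ into the fixed product metric $g+d\tau^{2}$ and turns $\mathcal{L}_\eps$ into a uniformly elliptic operator with $\eps$-independent coefficients in the rescaled variables. A De Giorgi--Nash--Moser iteration applied to the nonnegative sub-solution $e_\eps$ on the rescaled cylinders, combined with the weighted $L^1$ bound and a covering argument in $\tau$, delivers $\sup_{M\times[t_0,\infty)}e_\eps(u_\eps)\le C(E(u_0),n,t_0)$, which is exactly \eqref{gradient_est}.

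With this bound in hand, $u_\eps$ is equi-Lipschitz in $x$ on $M\times[t_0,\infty)$ and $\sqrt\eps\,\partial_t u_\eps\in L^\infty_{\rm loc}$. Rellich--Kondrachov extracts a subsequential limit $u_\eps\xrightarrow{\eps\to 0}u$ in $L^{2}_{\rm loc}\cap C^{1/2}_{\rm loc}(M\times\R_+,N)$ with $\nabla u_\eps\weak\nabla u$. Testing \eqref{eq:elliptic_intro} against $\phi\in C^\infty_c(M\times(0,\infty);\R^L)$, integration by parts gives $\int\eps\partial_t^{2}u_\eps\cdot\phi=\int\eps\,u_\eps\cdot\partial_t^{2}\phi\to 0$ and $\int\eps\,A(u_\eps)(\partial_t u_\eps,\partial_t u_\eps)\cdot\phi\to 0$ because $\eps|\partial_t u_\eps|^{2}\le e_\eps(u_\eps)\le C$, while the quadratic gradient term converges by strong $L^2$-convergence of $\nabla u_\eps$; hence $u$ solves \eqref{eq:heat_flow_intro}. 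Parabolic Schauder theory, applied iteratively, promotes $u$ to a classical smooth solution, and uniqueness is provided by Theorem \ref{th:main_intro_0}(2). The main obstacle is the Moser step: extracting a sup estimate with $\eps$-independent constants in spite of the fact that both $g_\eps$ and $\rho_\eps$ degenerate as $\eps\to 0$. The rescaling $(y,\tau)=(x,t)/\sqrt\eps$ restores uniform ellipticity but contracts cylinders to vanishing Euclidean size, so a careful covering argument exploiting the normalization $\int_0^\infty\tfrac{e^{-t/\eps}}{\eps}\,dt=1$ is essential for the constant to depend only on $E(u_0)$, $n$ and $t_0$.
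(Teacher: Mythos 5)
The Bochner step you propose is sound and indeed matches the paper's own computation (there the Bakry--\'Emery term vanishes because the weight is $e^{-t/\eps}$, affine in $t$, and the metric is a product). But your pivotal claim that ``the parabolic rescaling $(y,\tau):=(x,t)/\sqrt\eps$ converts $g_\eps$ into the fixed product metric $g+d\tau^2$ and turns $\mathcal L_\eps$ into a uniformly elliptic operator with $\eps$-independent coefficients'' is false. Under $\tau=t/\sqrt\eps$, $\eps\partial^2_t=\partial^2_\tau$ but $\partial_t=\eps^{-1/2}\partial_\tau$, so $\mathcal L_\eps=\Delta_x+\partial^2_\tau-\eps^{-1/2}\partial_\tau$; the drift term blows up as $\eps\to 0$, and no affine rescaling in $(x,t)$ removes it (the best one can do is the homogeneous scaling $(y,\tau)=(x/r,t/r^2)$, which preserves the form of $\mathcal L_\eps$ with $\eps$ replaced by $\eps/r^2$ --- still $\eps$-dependent). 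Consequently, a standard De Giorgi--Nash--Moser iteration ``on the rescaled cylinders'' does not deliver $\eps$-uniform constants; the hand-waved covering argument does not close this gap. The paper's actual resolution is Lemma~\ref{lemma:weak_harnack}: one works with the original operator and replaces the usual parabolic cutoff in time by the Audrito-type function
\begin{equation*}
\phi(t)=a_*+(1-a_*)e^{(t_*-t)/(2\eps)}\quad\text{on }(t_*,\eta^2],\qquad a_*=-\bigl(e^{(\eta^2-t_*)/(2\eps)}-1\bigr)^{-1},
\end{equation*}
chosen so that $\phi+2\eps\phi'$ is constant there. This cancellation is precisely what makes the Caccioppoli inequality, and hence the resulting $L^\infty$ bound, uniform in $0<\eps\le r^2$; the paper also stresses that the \emph{full} Harnack inequality cannot be $\eps$-uniform, only the sup bound used here. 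Your sketch misses this mechanism entirely.

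A second, smaller gap occurs in the passage to the limit: you assert that ``the quadratic gradient term converges by strong $L^2$-convergence of $\nabla u_\eps$'' immediately after stating only $\nabla u_\eps\rightharpoonup\nabla u$. That strong $H^1_{\rm loc}$ convergence is not automatic from Rellich; the paper proves it (Section~\ref{sec:non_positive}, subsection 6.3) by testing the Euler--Lagrange equations for $u_\eps$ and $u_{\eps'}$ against $(u_\eps-u_{\eps'})\psi^2$ and showing $\nabla u_\eps$ is Cauchy in $L^2_{\rm loc}$, using the uniform $C^0$ convergence just obtained. Without an argument of this kind, you cannot pass to the limit in $A(u_\eps)(\nabla u_\eps,\nabla u_\eps)$.
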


As already observed, the minimizer $u_\eps$ solves the elliptic (in space-time) system \eqref{eq:elliptic_intro}.
The ideas to obtain a uniform gradient estimate \eqref{gradient_est} are based on an $\eps$-version of Bochner inequality:
\begin{equation}
\label{eq:bochner0}
-\eps\partial_t^2 e_\eps(u_\eps) -\Delta e_\eps(u_\eps) + \partial_t e_\eps(u_\eps)\le C_M e_\eps(u_\eps) \ \ {\rm{on}}\ \ M\times (0,\infty),
\end{equation}
and the modified version of Moser's Harnack inequality: $\exists r_0=r_0(M)>0$ such that for $z_0=(x_0, t_0)\in M\times (0,\infty)$,
$0<r<\min\{r_0, \frac{\sqrt{t_0}}2\}$, and $\forall 0<\eps\le r^2$, it holds
\begin{equation}
\sup_{B_{r}(x_0)\times (t_0-r^2, t_0)}e_{\eps}(u_\eps)\le c(n)\left[\frac{\eps}{r^{n+2}}+ \frac{1}{r^{n}}\right]E(u_0).
\end{equation}
{We stress that Theorem \ref{th:main_intro} (and likewise Theorem \ref{th:main_intro_0}) does not follow directly from the abstract WED theory for gradient flows in metric spaces developed in \cite{rsss-wed} (see in particular Theorem 3.6 therein), since at that level of generality the regularity of weak solutions is not addressed.

Rather, the abstract framework produces a limit curve which can then be identified as a curve of maximal slope for the energy $E$ and, in our setting, with the unique smooth (or Lipschitz) solution of the flow \eqref{eq:heat_flow_intro}.}

The heat flow \eqref{eq:heat_flow_intro} was first introduced by Eells and Sampson to continuously deform the map ${u}_0$ and obtain, in the limit $t\to +\infty$, a harmonic map in the homotopy class of ${u}_0$. This leads in particular to geometric descriptions of homotopy groups of manifolds. 
This problem is classical and dates back to the seminal paper of Eells and Sampson \cite{eells-sampson} and of Hartman \cite{hartman}. The basic geometric assumption for this program to be successful is again the non positivity of the sectional curvature of the target manifold $N$, which brings the global existence and the smoothness of the flow, as already noticed. Therefore, given the map $u_0$ the existence of a 
harmonic map, which turns out to be  unique,  in the homotopy class of $u_0$ is obtained by investigating the long time behavior of the flow.   

The WED approximation provides a different proof of this classical result directly at the level $\eps>0$. 
This new approach combines the properties of the Value Function, among others the fact that it satisfies (when the energy $E$ is convex) a
Hamilton-Jacobi equation, with the interpretation of $u_\eps$ as an EVI gradient flow for $V_\eps$. This point of view highlights the basic role of the convexity of the energy $E$ and of the square of geodesic distance in $N$ (both consequences of the nonpositive curvature assumption) for the validity of the Eells and Sampson's result. 
We prove the following 


\begin{theorem}
\label{th:ES-optimal_control_intro}
Let $N$ be a compact Riemannian manifold without boundary and with nonpositive sectional curvature 
and let ${u}_0\in H^1(M,N)\cap C^\infty(M, N)$.
 Then, given $u_\eps \in \argmin\left\{\mathcal I _\eps[v],\,\, v\in \mathfrak{V}_{{u}_0}\right\}$ there holds
 \[
 \forall \eps>0\quad u_\eps(t)\xrightarrow{t\to +\infty}u_\infty \qquad \textrm{ in } L^2_{\textrm{loc}}(M, N), 
 \]
 with $u_{\infty}\in C^\infty(M, N)$ the unique minimizing harmonic map in the homotopy class of ${u}_0$.
\end{theorem}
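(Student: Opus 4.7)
The overall plan is to leverage the EVI characterization of $u_\eps$ as the gradient flow of the geodesically convex Value Function $V_\eps$ provided by Theorem \ref{th:main_intro_0}(1), test the EVI against the Eells-Sampson-Hartman harmonic map $u_\infty$, and close the argument by identifying $u_\infty$ as the unique minimizer of $V_\eps$ on the homotopy class $[u_0]$.

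The key preliminary step I would carry out is the following matching-of-minimizers fact: \emph{$u_\infty$ is the unique minimizer of $V_\eps$ restricted to $[u_0]$, with $V_\eps(u_\infty)=E(u_\infty)$.} The upper bound $V_\eps(u)\leq E(u)$ follows by plugging the constant curve $v\equiv u$ into \eqref{eq:value_intro}. For the lower bound on $[u_0]$, elliptic regularity for the Euler-Lagrange system \eqref{eq:elliptic_intro} makes any minimizer $v\in\mathfrak{V}_u$ of $\mathcal I_\eps$ smooth on $M\times[0,\infty)$, so $v(t)\in[u_0]$ for every $t\geq 0$ by continuity; hence $E(v(t))\geq E(u_\infty)$, and since $\int_0^\infty e^{-t/\eps}/\eps\,dt=1$ we obtain $V_\eps(u)\geq E(u_\infty)$. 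At $u=u_\infty$ the constant curve $v\equiv u_\infty$ is itself a stationary solution of \eqref{eq:elliptic_intro} (because $u_\infty$ is harmonic) and, by convexity of $\mathcal I_\eps$, is the unique minimizer, giving $V_\eps(u_\infty)=E(u_\infty)$; uniqueness of $u_\infty$ on $[u_0]$ then follows from Hartman's classical uniqueness theorem for harmonic maps into NPC targets.

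Testing the EVI \eqref{eq:EVI_V_intro} against $v=u_\infty$ I would next obtain
\[
\tfrac{1}{2}\tfrac{d}{dt}d_2^2(u_\eps(t),u_\infty)+V_\eps(u_\eps(t))\leq V_\eps(u_\infty).
\]
Smoothness of $u_\eps$ in $(x,t)$ ensures $u_\eps(t)\in[u_0]$ for every $t\geq 0$, so by the previous step $V_\eps(u_\eps(t))\geq V_\eps(u_\infty)$; integrating on $[0,T]$ yields
\[
\int_0^T\!\bigl[V_\eps(u_\eps(t))-V_\eps(u_\infty)\bigr]\,dt\leq\tfrac{1}{2}\,d_2^2(u_0,u_\infty)<\infty.
\]
Since $t\mapsto V_\eps(u_\eps(t))$ is non-increasing (a standard EVI consequence), this non-negative, monotone, integrable difference must vanish in the limit, so $V_\eps(u_\eps(t))\to V_\eps(u_\infty)$ as $t\to\infty$. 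Theorem \ref{th:main_intro} then supplies the uniform bound $e_\eps(u_\eps)(\cdot,t)\leq C(E(u_0),n,1)$ on $M\times[1,\infty)$, which with the compactness of $N$ and Arzel\`a-Ascoli extracts locally uniform subsequential limits $u_\eps(t_n)\to\tilde u$ along any $t_n\to\infty$. Consequently $\tilde u\in[u_0]$ and $V_\eps(\tilde u)\leq V_\eps(u_\infty)$ by lower semicontinuity, so the preliminary step forces $\tilde u=u_\infty$; since this limit is independent of the chosen subsequence, the whole family converges, $u_\eps(t)\to u_\infty$ in $L^2_{\mathrm{loc}}(M,N)$.

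The hardest part of the argument will be the matching-of-minimizers step: one needs a sufficiently robust regularity theory, up to $t=0$, for the Euler-Lagrange system \eqref{eq:elliptic_intro} in order to guarantee that the minimizing curve $v\in\mathfrak{V}_u$ stays inside the homotopy class of its Dirichlet datum $u$, and to justify that at $u=u_\infty$ the constant curve is indeed the unique minimizer despite being the natural candidate only formally. Once this is in place, the remainder reduces to standard EVI dissipation estimates together with the uniform Lipschitz bound of Theorem \ref{th:main_intro}.
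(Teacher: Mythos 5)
Your proposal is correct in substance but takes a genuinely different organizational route from the paper. The paper's proof shifts time $u_k^{(\eps)}(t)=u_\eps(t+t_k)$, uses the uniform Harnack bound of Theorem~\ref{th:main_intro} to extract a time-independent subsequential limit $u_\infty^{(\eps)}$, and then applies the standard EVI \emph{regularization} estimate
$V_\eps(u_\eps(t))\le V_\eps(v)+d_2^2(u_0,v)/(2t)$
to show that $u_\infty^{(\eps)}$ minimizes $V_\eps$; Corollary~\ref{cor:minV=minE} (via the Hamilton--Jacobi equation \eqref{eq:HJ-V}) then promotes this to an $E$-minimizer, after which homotopy to $u_0$ and Hartman's uniqueness conclude. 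You, instead, first carry out a ``matching-of-minimizers'' step identifying $u_\infty$ as the $V_\eps$-minimizer on $[u_0]$ with the clean identity $V_\eps(u_\infty)=E(u_\infty)$, and then integrate the EVI against the fixed competitor $v=u_\infty$ to get the quantitative bound
$\int_0^T[V_\eps(u_\eps(t))-V_\eps(u_\infty)]\,\d t\le \tfrac12 d_2^2(u_0,u_\infty)$,
from which monotonicity of $t\mapsto V_\eps(u_\eps(t))$ (indeed noted in \eqref{eq:value_function_fundamental} together with $E\ge V_\eps$) yields $V_\eps(u_\eps(t))\to V_\eps(u_\infty)$. What your route buys is a transparent quantitative statement and the pleasant identity $V_\eps(u_\infty)=E(u_\infty)$; what the paper's route buys is closer contact with the general AGS machinery (the $1/t$ regularization is the off-the-shelf consequence of the EVI, and the Hamilton--Jacobi/Corollary~\ref{cor:minV=minE} step packages the ``minimizer of $V_\eps$ is minimizer of $E$'' argument once and for all). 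Both arguments ultimately rely on the same three ingredients: the EVI characterization of $u_\eps$ (Theorem~\ref{th:evi_V}), the uniform gradient estimate of Theorem~\ref{th:main_intro} for compactness, and Hartman's uniqueness of the harmonic representative in the homotopy class.

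Two technical points you should make fully rigorous. First, the EVI \eqref{eq:EVI_V_intro} in Section~3 is established for CAT$(0)$ targets; a compact non-simply-connected NPC manifold is not CAT$(0)$, so just as in the paper's own proof one must tacitly work within the homotopy class $[u_0]$, where the geodesic-homotopy interpolation and the resulting convexity of $E$, $d_2^2$, and $V_\eps$ all survive. It is exactly in this restricted sense that your competitor $u_\infty\in[u_0]$ may be inserted into the EVI, and in this same restricted sense the paper's phrase ``$\forall v\in H^1(M,N)$'' should be read. Second, for the lower bound $V_\eps(u)\ge E(u_\infty)$ on $[u_0]$ and for the claim $u_\eps(\cdot,t)\in[u_0]$ for all $t\ge 0$, you correctly flag that one needs the minimizing trajectory $v\in\mathfrak{V}_u$ to stay in the homotopy class up to $t=0$. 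The spatial Lipschitz regularity of $u_\eps$ on $M\times(0,\infty)$ is in Theorem~\ref{th:ex_minimizer}, but continuity at $t=0$ in a norm strong enough to read off the homotopy class (rather than just $L^2$) requires an additional argument -- e.g.\ boundary regularity for the elliptic system \eqref{eq:elliptic_intro} with smooth Dirichlet datum $u_0$, as you anticipate. Once that is in place, the rest of your argument goes through, and in my view it matches the paper's level of rigor while being a somewhat more streamlined presentation.
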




\begin{remark}
In the smooth setting, there are several ways to construct solutions for the heat flow of harmonic maps.  A classical strategy is to penalize the constraint in the equation \eqref{eq:heat_flow_intro}, and introduce the following smooth evolution 
\begin{equation}
\label{eq:GL_intro}
\begin{cases}
\partial_t u_\eps -\Delta u_\eps + F_\eps(u_\eps) = 0 \qquad &\textrm{ in } M\times (0,+\infty)\\
u_{\eps}(x,0) = {u}_0(x) \qquad &\textrm{ on } M,
\end{cases}
\end{equation}
where $u_\eps:M\times [0,+\infty)\to \R^L$ and $F_\eps:\R^L\to \R_+$ ``approximates'' the target manifold $N$ as $\eps \to 0$. System \eqref{eq:GL_intro} is the so called {\sl Ginzburg-Landau system}. The papers of Chen and Struwe \cite{chen}, \cite{chen-struwe}, proved the existence of partially regular solutions of the heat flow of harmonic maps using such approach. In \cite{LW1,LW2,LW3}, the first and last author introduced another method by considering limits of {\sl parabolic defect measures}, an object which describes the obstructions to regularity of the flow. In particular, they provide another proof of the Eells-Sampson theorem. 

\end{remark}

We emphasize once again that the approach we pursue in the present paper is new and shed some new light on the variational theory of harmonic maps. Building on \cite{rsss-wed,ags2005}, the current framework can be summarized as:

\begin{enumerate}
\item It provides a new regularization, independent of the afore mentioned Ginzburg-Landau approximation scheme.  
\item The approach is variational and therefore it paves the way to the use of calculus of variations techniques. 
\item The approach is very well-suited to targets with {\sl nonpositive} curvature of Alexandrov type. 
\end{enumerate}

{
We also observe that 
 the variational approximation via the functional $\mathcal{I}_\eps$ has also a geometric flavor (already observed, in a different context by Ilmannen in \cite{ilmanen94}). 
For any $\eps>0$, the energy $\mathcal{I}_\eps$ is the Dirichlet energy for maps $v:M\times (0,+\infty)\to N$ when we endow the cylindrical manifold $M\times (0,+\infty)$ with a suitable  conformal metric $\bar{g}_\eps$ to the rescaled cylindrical one. This interpretation gives that $u_\eps$, being a (minimizing) harmonic map, is indeed smooth as soon as $N$ has nonpositive curvature. With this respect the time variable $t$ appears as a new ``space variable'' and the initial condition $u_0$ plays the role of the Dirichlet boundary condition on the boundary $M\times \left\{0\right\}$. }



{\begin{remark}
We would like to also point out that the framework developed in the present paper is general enough to incorporate several others geometric objects introduced: half-harmonic maps \`a la Da Lio-Rivi\`ere \cite{DR,DR2}, harmonic maps with free boundary \cite{chen-lin,HSSW,StruweManMath1991}, and several variations of those just to name a few examples. We hope to address such systems in the future. 
\end{remark}
}

{
As mentioned, the motivation to address the present approximation comes from the paper \cite{rsss-wed} on the Weighted Energy Dissipation approximation for Gradient flows in metric spaces.
The WED approximation has however a longer history (we refer to the recent \cite{stefanelli2024report} for a detailed discussion and reference therein). 
The first appearance of the functional $\mathcal{I}_\eps$ to address a parabolic problem seems to be in the paper of Ilmanen on the Brakke's flow \cite{ilmanen94} (however, references to elliptic regularization of parabolic problems were already been considered, for instance, by J.L. Lions in \cite{lions65}). More recently, the WED approximation has been used to address different evolutions of parabolic type, ranging from rate-independent problems (\cite{mielke-ortiz}) to gradient flows in Hilbert spaces \cite{mielke-stefanelli}. We also mention the very recent paper of Audrito \cite{audritoJEQ23}.

A particular credit should also be given to the late Ennio De Giorgi who, moving from the functional $\mathcal{I}_\eps$ proposed (see \cite{deGiorgi}) a modification of $\mathcal{I}_\eps$ to variationally  approximate the nonlinear wave equation ($m$ integer $>1$)
\[
\partial_{t}^2 u-\Delta u + m \abs{u}^{2m-1} =0.
\]
The De Giorgi conjecture has been positively fully solved by Serra and Tilli \cite{serra-tilli}. See also the paper by Stefanelli \cite{stefanelli-wave} for an earlier positive result.
}

\section{Weighted Energy Dissipation functional for Harmonic maps}
\label{sec:wed_hm}

In this section, we introduce our variational approximation scheme for maps defined on the space time cylinder
$M_\infty=M\times(0,\infty)$ with values in a CAT(0) metric space $(X,d)$. 

As explained in the introduction, we consider the following functional defined on (half) trajectories 
\[
\mathcal{I}_\eps(v):= \int_{0}^\infty\frac{e^{-t/\eps}}{\eps}\left(\frac{\eps}{2}\abs{v'}^2(t) + E(v(t))\right)\d t, 
\]
where $\abs{v'}$ is the metric derivative of $v$ and $E$ is the Dirichlet energy \eqref{eq:diri_intro}. 
Given ${u}_0\in D(E)= H^1(M, X)$ and
$\eps>0$,  we minimize  the WED functional $I_\eps$ on the (non empty) set 
\begin{equation}
\label{eq:Hut}
{\mathfrak{V}_{{{u}_0}}:= \left\{v:M_\infty\to X: v\in H^1_{\textrm{loc}}(M_\infty, X): \,v(0)= {u}_0,  
\,\,\iint_{M_\infty}(\abs{\partial_t v}^2 +\abs{\nabla v}^2)e^{-t/\eps}\Vg \d t<+\infty\right\}}.
\end{equation}
It is a standard matter to show that the Dirichlet energy is lower semicontinuous in $L^2(M, X)$ and, as soon as $M$ is compact manifold, with compact sublevels in $L^2(M, X)$. 
Therefore for any $\eps>0$, there exists at least one minimizer $u_\eps$ for $\mathcal{I}_\eps$.
When $X$ is a CAT(0) space, the uniqueness of the minimizer follows as a consequence of 
the convexity properties of the energy that we discuss now. 

As $(X,d)$ is a CAT(0) metric space, the map $v\mapsto d^2(v,w)\,\,(w\in X)$ enjoys nice convexity 
properties. 
To this end, we recall that 
 given a metric space $(X,d)$, we say that a curve $\gamma:[0,1]\to X$ is a 
constant speed geodesic (or simply a geodesic), if 
\begin{equation}
\label{eq:constant_speed}
d(\gamma_s,\gamma_t) = d(\gamma_0,\gamma_1)\abs{t-s} \qquad \forall s, t\in (0,1).
\end{equation}
A CAT(0) space is a length space, namely, the distance between any two points can be realized as the length
of a geodesic (actually a unique geodesic) joining them, 
and it turns out 
that
\begin{equation}
\label{eq:NPC_N}
X\ni v\mapsto d^2(w,v) \qquad \textrm{ is }1\textrm{-geodesically convex}, 
\end{equation}
that is, for any given $u, v\in X$, if $\gamma:[0,1]\to X$ is the minimal geodesic joining $u$ to $v$, then 
\[
d^2(\gamma_s, w)\le (1-s)d^2(u,w) + sd^2(v,w) -s(1-s)d^2(u,v)\qquad \forall w\in X, \forall s\in (0,1). 
\]
An analogous property holds in $(L^2(M, X), d_2)$, where $d_2$ is defined in \eqref{eq:metric_L2}. 
In fact, for any given $f_0, f_1\in L^2(M, X)$,
for almost any $x\in M$ we geodesically interpolate the points $f_0(x)$ and $f_1(x)$ in $X$ via the unique 
geodesic $\gamma_s(x)$. 
Therefore, given a map $f\in L^2(M, X)$ we get, for almost any $x\in M$, 
\[
d^2(f(x), \gamma_{s}(x)) \le (1-s)d^2(f(x), f_0(x)) + sd^2(f(x), f_1(x)) -s(1-s)d^2(f_0(x),f_1(x)).
\]
Thus, integrating on $M$, we get that for all $f\in L^2(M, X)$ 
\begin{equation}
\label{eq:convex_d}
L^2(M, X)\ni h\mapsto d^2_2(f,h)\qquad \textrm{ is }1\textrm{-geodesically convex}.
\end{equation}
Being CAT(0) also has important consequences on the convexity of the Dirichlet energy. 
Namely, we have the following (see 
\cite{KS})
\begin{lemma}[Convexity of $E$]
\label{lemma:convexity_E}
Let $X$ be a CAT(0) metric space. 
Then the energy $E$ is geodesically convex and satisfies
that 
for any $u_0, u_1\in D(E)$ there exists a constant speed geodesic $\gamma_t$ with $\gamma_0 = u_0$, $\gamma_1 = u_1$ such that $\gamma_t\in H^1(M, X)$ and 
\begin{equation}
\label{eq:convexity_E}
E(\gamma_t) \le (1-t)E(\gamma_0) + tE(\gamma_1) -t(1-t)\int_{M}\abs{\nabla d(u_0,u_1) }^2\Vg,
 \qquad \forall t\in [0,1].
\end{equation}
\end{lemma}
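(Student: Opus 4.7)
The natural candidate is the pointwise geodesic interpolation: for each $x\in M$, the CAT($0$) hypothesis yields a unique constant-speed geodesic in $X$ joining $u_0(x)$ to $u_1(x)$, and I set $\gamma_t(x)$ to be its value at time $t\in[0,1]$. Measurability of $x\mapsto\gamma_t(x)$ follows from the continuity of the midpoint map in its endpoints in any CAT($0$) space, and a direct computation identifies $t\mapsto\gamma_t$ as a constant-speed geodesic in $(L^2(M,X),d_2)$ from $u_0$ to $u_1$.

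The workhorse is the Reshetnyak-type four-point inequality valid in any CAT($0$) space: for any $P_0,P_1,Q_0,Q_1\in X$, if $\gamma^P_t$ and $\gamma^Q_t$ denote the constant-speed geodesics joining the pairs $(P_0,P_1)$ and $(Q_0,Q_1)$, then
\[
d^2(\gamma^P_t,\gamma^Q_t)\le (1-t)\,d^2(P_0,Q_0)+t\,d^2(P_1,Q_1)-t(1-t)\bigl(d(P_0,P_1)-d(Q_0,Q_1)\bigr)^2,
\]
which itself is obtained from iterated applications of \eqref{NPC1} together with a Cauchy-Schwarz-type rearrangement. Applied pointwise with $P_i=u_i(x)$ and $Q_i=u_i(y)$, this yields
\[
d^2(\gamma_t(x),\gamma_t(y))\le (1-t)\,d^2(u_0(x),u_0(y))+t\,d^2(u_1(x),u_1(y))-t(1-t)\bigl(\varphi(x)-\varphi(y)\bigr)^2,
\]
where $\varphi(x):=d(u_0(x),u_1(x))$. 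The triangle inequality $|\varphi(x)-\varphi(y)|\le d(u_0(x),u_0(y))+d(u_1(x),u_1(y))$ then shows that $\varphi\in H^1(M)$ with $|\nabla\varphi|$ controlled by $|\nabla u_0|+|\nabla u_1|$.

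To conclude, I would feed the pointwise estimate into the Korevaar-Schoen approximating energy densities $e_\epsilon$: dividing by the Riemannian distance squared $d_M(x,y)^2$, averaging $y$ over a geodesic ball of radius $\epsilon$ around $x$, and integrating over $M$ gives an inequality between $E_\epsilon(\gamma_t)$ and the linear combination $(1-t)E_\epsilon(u_0)+t\,E_\epsilon(u_1)-t(1-t)E_\epsilon(\varphi)$. Passing to the limit $\epsilon\to 0$ uses lower semicontinuity of the KS energy on the left together with weak-$\ast$ convergence of $e_\epsilon(u)\,\Vg$ to $|\nabla u|^2\,\Vg$ for $u=u_0,u_1,\varphi$ on the right; this yields both $\gamma_t\in H^1(M,X)$ and the inequality \eqref{eq:convexity_E} with the proper normalization constant absorbed into the KS averaging procedure.

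The main obstacle is that one must extract a \emph{lower} bound for the limiting density $|\nabla\gamma_t|^2$ out of an \emph{upper} bound on the approximate difference quotients $d^2(\gamma_t(x),\gamma_t(y))/d_M(x,y)^2$. This is handled by the standard KS machinery: the approximate measures $e_\epsilon(\gamma_t)\,\Vg$ converge weakly-$\ast$ to $|\nabla\gamma_t|^2\,\Vg$ along a subsequence, so the upper bound on the former transfers to the latter. Identifying the limit of the last term as $\int_M|\nabla\varphi|^2\,\Vg$ is immediate since $\varphi$ is real-valued and the KS energy reduces to the usual Sobolev energy in that case.
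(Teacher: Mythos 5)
Your argument is correct and is precisely the Korevaar--Schoen argument to which the paper defers (it gives no independent proof of Lemma~\ref{lemma:convexity_E}, just the citation to \cite{KS}): pointwise geodesic interpolation, the CAT($0$) quadrilateral comparison inequality applied with $P_i=u_i(x)$, $Q_i=u_i(y)$, the triangle-inequality bound showing $d(u_0,u_1)\in H^1(M)$, and passage to the limit in the Korevaar--Schoen approximating energies using weak-$\ast$ convergence of the energy density measures. One small point that is worth stating explicitly rather than leaving implicit: you first need $\gamma_t\in H^1(M,X)$ before invoking convergence (as opposed to mere $\limsup$) of $\mathcal{E}^{\gamma_t}_{M,\epsilon}$, and this follows by dropping the nonpositive correction term so that $\limsup_\epsilon \mathcal{E}^{\gamma_t}_{M,\epsilon}\le (1-t)\cdot 2E(u_0)+t\cdot 2E(u_1)<\infty$; once finiteness is secured, all four approximating energies converge and the sharp inequality \eqref{eq:convexity_E} follows by a direct passage to the limit.
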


%
%
%
%
%

\begin{theorem}[Convexity of the WED functional]
\label{th:convexity_wed}
Let $X$ be a CAT(0) metric space. Then the WED functional $\mathcal{I}_\eps$ is (stricly) geodesically convex, namely
$u\in \mathfrak{V}_{{\bar{u}}}$ and $v\in \mathfrak{V}_{{\bar{v}}}$ (with $\bar{u}, \bar{v}\in H^1(M, X)$) and a constant speed geodesic 
$\gamma_s$ connecting them, there holds
\begin{equation}
\label{eq:strict_conv}
\begin{split}
\mathcal{I}_\eps(\gamma_s) \le (1-s)\mathcal{I}_\eps(u) + s\mathcal{I}_\eps(v)
 -s(1-s)\iint_{M\times\R_+}\frac{e^{-t/\eps}}{\eps}\left(\frac{\eps}{2}\abs{\partial_t d(u(t),v(t))}^2 + \abs{\nabla d(u(t),v(t))}^2\right)\Vg \d t.
 \end{split}
\end{equation}

\end{theorem}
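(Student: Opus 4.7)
\medskip

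\noindent\textbf{Proof plan.} The natural candidate geodesic is constructed pointwise: for almost every $(x,t)\in M\times (0,\infty)$, let $\gamma_s(x,t)$ be the unique constant speed geodesic in $X$ connecting $u(x,t)$ to $v(x,t)$, parametrized by $s\in [0,1]$. The CAT($0$) hypothesis guarantees uniqueness, continuous (indeed $1$-Lipschitz) dependence on the endpoints, and hence measurability, and at $t=0$ one recovers the geodesic $\bar\gamma_s$ from $\bar u$ to $\bar v$, which lies in $H^1(M,X)$ by Lemma \ref{lemma:convexity_E}. The first step is to verify that, with $\gamma_s$ so defined, the right-hand side of \eqref{eq:strict_conv} is finite, so that $\gamma_s\in \mathfrak{V}_{\bar\gamma_s}$; this will in fact be a byproduct of the pointwise convexity estimates below.

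The next step is to establish convexity of the potential part. For each fixed $t$, $\gamma_s(\cdot,t)$ is the pointwise geodesic interpolation between $u(t)$ and $v(t)$ in $L^2(M,X)$, so Lemma \ref{lemma:convexity_E} applied slicewise gives
\begin{equation*}
E(\gamma_s(t))\le (1-s)E(u(t))+sE(v(t))-s(1-s)\int_M |\nabla d(u(t),v(t))|^2\,\Vg.
\end{equation*}
For the kinetic part, the plan is to apply the NPC inequality \eqref{NPC1} to the quadrilateral $u(x,t),v(x,t),u(x,t+h),v(x,t+h)$. A classical consequence (the Reshetnyak interpolation estimate, which follows from \eqref{NPC1} applied twice together with the characterization of the interpolation parameter) is
\begin{equation*}
d^2(\gamma_s(x,t),\gamma_s(x,t+h))\le (1-s)d^2(u(x,t),u(x,t+h))+s\,d^2(v(x,t),v(x,t+h))-s(1-s)\bigl[d(u(x,t),v(x,t))-d(u(x,t+h),v(x,t+h))\bigr]^2.
\end{equation*}
Dividing by $h^2$, passing $h\to 0$ (in the metric derivative sense of Korevaar--Schoen), and integrating in $x$ yields
\begin{equation*}
|\gamma_s'|^2(t)\le (1-s)|u'|^2(t)+s|v'|^2(t)-s(1-s)\int_M |\partial_t d(u(t),v(t))|^2\,\Vg.
\end{equation*}

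The conclusion then assembles by adding $\frac{\eps}{2}$ times the kinetic inequality to the potential one, multiplying by $e^{-t/\eps}/\eps$, and integrating over $(0,\infty)$. The strictness in \eqref{eq:strict_conv} is automatic from the defect terms whenever $d(u,v)$ is not identically zero. The main obstacle, in my view, is the rigorous justification of the pointwise metric-derivative estimate: one must upgrade the four-point inequality \eqref{NPC1} to the uniform control on nearby pairs of geodesics above, and then argue that the metric derivatives of $\gamma_s$, $u$, $v$ and of $t\mapsto d(u(\cdot,t),v(\cdot,t))$ are compatible with this inequality in the almost-everywhere sense on $(0,\infty)$. This is a standard but somewhat technical use of Korevaar--Schoen approximate energy densities together with the $1$-Lipschitz continuity of the interpolation map $X\times X\to X$, $(P,Q)\mapsto R_{s,P}(Q)$ that was recalled in the introduction.
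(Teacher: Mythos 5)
Your proof follows exactly the same route as the paper: geodesic interpolation slice by slice in $t$, Lemma \ref{lemma:convexity_E} for the potential part, and the quadrilateral (Reshetnyak) comparison inequality for the kinetic term, then division by $h^2$ and passage to the limit in the metric derivative. The only difference is cosmetic — the step you flag as the ``main obstacle'' is precisely what the paper resolves by invoking Korevaar--Schoen \cite[Corollary 2.1.3]{KS}, which is the four-point interpolation estimate you labelled the Reshetnyak inequality.
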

\begin{proof}
Suppose we are given $\bar{u}$ and $\bar{v}$ in $H^1(M, X)$. We consider the geodesic 
$\bar{\gamma}_s\in H^1(M, X)$ for any $s\in [0,1]$
such that $\bar{\gamma}_{0} = \bar{u}$ and $\bar{\gamma}_1= \bar{v}$. Then we consider $u\in \mathfrak{V}_{{\bar{u}}}$ and $v\in \mathfrak{V}_{{\bar{v}}}$ and we let, for any fixed $t\in (0,+\infty)$, $\gamma_s(t)$ be the geodesic connecting $u(t)$ with $v(t)$, namely $\gamma_0(t) = u(t)$ and $\gamma_1(t)= v(t)$. 
Clearly we have that $\gamma_s(0) = \bar{\gamma}_s$ and that $\gamma_s\in \mathfrak{V}_{{\bar{\gamma_s}}}$.
Thanks to \cite[Corollary 2.1.3]{KS} we have that, for almost any $(x,t)\in Q$ and for any $s\in (0,1)$,
\[
\begin{split}
d^2(\gamma_s(t+h), \gamma_s(t)) &\le (1-s)d^2(u(t+h),u(t)) + s d^2(v(t+h), v(t))\\
& -s(1-s)\left(d(u(t+h),v(t+h))-d(u(t),v(t))\right)^2
\end{split}
\]
Thus,  we get 
\[
\abs{\gamma_s'}^2(t) \le (1-s)\abs{u'}^2(t) + s\abs{v'}^2(t) -s(1-s)\int_M \abs{\partial_t d(u(t),v(t))}^2\Vg.
\]

%
%
%
%
On the other hand, geodesic convexity of the energy guarantees that 
\[
E(\gamma_s(t)) \le (1-s)E(u(t)) + s E(v(t)) -s(1-s)\int_{M}\abs{\nabla d(u(t),v(t))}^2\Vg.
\]
Therefore,
\[
\begin{split}
&\mathcal{I}_\eps(\gamma_s) = \int_{0}^{+\infty}\frac{e^{-t/\eps}}{\eps}\left(\frac{\eps}{2}\abs{\gamma'_s}^2(t) + E(\gamma_s(t))\right)\d t\\
 \,\,\,\,&\le (1-s)\mathcal{I}_\eps(u) + s\mathcal{I}_\eps(v)
 -s(1-s)\iint_{M\times\R_+}\frac{e^{-t/\eps}}{\eps}\left(\frac{\eps}{2}\abs{\partial_t d(u(t),v(t))}^2 + \abs{\nabla d(u(t),v(t))}^2\right)\Vg \d t.
\end{split}
\]
\end{proof}

\begin{theorem}[Well posedness of the minimum problem]
\label{th:wp_minimum}
Let $X$ be a CAT(0) space. Then for any given ${u}_0\in H^1(M, X)$ and $\eps>0$, there exists a unique 
$u_\eps\in \mathfrak{V}_{{{u}_0}}$ such that 
\[
\mathcal{I}_\eps(u_\eps) = \min_{v\in \mathfrak{V}_{{{u}_0}}}\mathcal{I}_\eps(v).
\]
\end{theorem}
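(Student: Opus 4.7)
The plan is the direct method of calculus of variations, combined with the strict convexity of Theorem \ref{th:convexity_wed} for uniqueness. The admissible set $\mathfrak{V}_{u_0}$ is non-empty: the constant-in-time extension $v(x,t) = u_0(x)$ belongs to it and satisfies $\mathcal{I}_\eps(v) = E(u_0) < \infty$. Since $\mathcal{I}_\eps \ge 0$, the infimum $m := \inf_{\mathfrak{V}_{u_0}} \mathcal{I}_\eps$ is finite. Fix a minimizing sequence $\{v_n\} \subset \mathfrak{V}_{u_0}$, so that for $n$ large
\[
\int_0^{\infty} \frac{e^{-t/\eps}}{\eps}\left(\frac{\eps}{2}|v_n'|^2(t) + E(v_n(t))\right) dt \le m+1.
\]

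From this bound I would extract a subsequence converging to a candidate minimizer $u_\eps$. Using Cauchy-Schwarz against the weight $e^{-t/\eps}$, the bound on the metric derivative yields $d_2(v_n(t),v_n(s)) \le C_\eps\sqrt{e^{t/\eps} - e^{s/\eps}}$, so $\{v_n\}$ is equicontinuous on any compact time interval as curves with values in $(L^2(M, X), d_2)$. The weighted bound on $E(v_n(t))$ implies, by Fatou, that on a set of full measure in $\R_+$ the values $v_n(t)$ lie in a sub-level of $E$; since $M$ is compact, such sub-levels are relatively compact in $L^2(M, X)$ (as recalled at the start of the section). A diagonal Ascoli--Arzel\`a argument then produces a subsequence and a limit curve $u_\eps \in C([0,T]; L^2(M, X))$ for every $T > 0$, with $u_\eps(0) = u_0$.

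The main obstacle is the passage to the limit in $\mathcal{I}_\eps$. It relies on two lower-semicontinuity ingredients: that $E$ is $L^2(M, X)$-lower semicontinuous (as recalled in the opening of this section), and that the metric derivative $|\,\cdot\,'|$ is lower semicontinuous along pointwise $L^2(M, X)$-convergent equicontinuous curves---a classical fact in the theory of metric-valued absolutely continuous curves, see \cite[Ch.\ 1]{ags2005}. Combining both with Fatou's lemma against the positive continuous weight $e^{-t/\eps}/\eps$ yields $\mathcal{I}_\eps(u_\eps) \le \liminf_n \mathcal{I}_\eps(v_n) = m$, so in particular $u_\eps \in \mathfrak{V}_{u_0}$ and it is a minimizer.

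Uniqueness then follows at once from Theorem \ref{th:convexity_wed}. Suppose $u, v \in \mathfrak{V}_{u_0}$ are two distinct minimizers. Since $u(0) = v(0) = u_0$, the map $d(u(\cdot),v(\cdot))$ vanishes at $t=0$ but cannot be identically zero on $M \times \R_+$; hence either $\partial_t d(u,v)$ or $\nabla d(u,v)$ is non-zero on a set of positive measure, and the penalty term in \eqref{eq:strict_conv} is strictly positive. Taking $s = 1/2$ in \eqref{eq:strict_conv} then produces $\mathcal{I}_\eps(\gamma_{1/2}) < \frac{1}{2}(\mathcal{I}_\eps(u) + \mathcal{I}_\eps(v)) = m$, contradicting the minimality of $u$ and $v$. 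Therefore $u = v$.
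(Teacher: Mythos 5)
Your overall strategy (direct method for existence, strict convexity of Theorem \ref{th:convexity_wed} for uniqueness) is the same as the paper's, and your non-emptiness, equicontinuity, lower-semicontinuity, and uniqueness steps are all correct. However, there is a genuine gap in your compactness argument, precisely at the Fatou step. The bound $\iint \tfrac{e^{-t/\eps}}{\eps}E(v_n(t))\,\Vg\,\d t\le m+1$ combined with Fatou only gives $\liminf_n E(v_n(t))<\infty$ for a.e.\ $t$; it does not place $\{v_n(t)\}_n$ inside a fixed sub-level of $E$, since the ``good'' set of indices depends on $t$. In particular, $\liminf_n E(v_n(t_j))<\infty$ at countably many times $t_j$ does \emph{not} produce a single subsequence along which $E(v_n(t_j))$ is bounded for all $j$ simultaneously (think of an alternating pattern in $n$ between $t_1$ and $t_2$ compatible with the integral bound), so the diagonal Ascoli--Arzel\`a extraction does not go through as written. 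This is not cosmetic: your route leans on compactness of sub-levels, which the paper's proof deliberately never invokes.

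The paper's proof, though terse, points at a different and cleaner mechanism: strict convexity yields existence directly by making minimizing sequences Cauchy, with no compactness needed. Concretely, for two elements $v_n, v_m$ of a minimizing sequence, applying \eqref{eq:strict_conv} at $s=\tfrac12$ and using that the midpoint $\gamma_{1/2}$ lies in $\mathfrak{V}_{u_0}$ (so $\mathcal{I}_\eps(\gamma_{1/2})\ge m$) gives
\[
\iint_{M\times\R_+}\frac{e^{-t/\eps}}{\eps}\Big(\frac{\eps}{2}\abs{\partial_t d(v_n,v_m)}^2+\abs{\nabla d(v_n,v_m)}^2\Big)\Vg\,\d t
\;\le\; 4\Big(\tfrac12\mathcal{I}_\eps(v_n)+\tfrac12\mathcal{I}_\eps(v_m)-m\Big)\;\xrightarrow[n,m\to\infty]{}\;0.
\]
Since $d(v_n(\cdot,0),v_m(\cdot,0))\equiv 0$, the fundamental theorem of calculus in $t$ and Cauchy--Schwarz yield $\sup_{t\le T}d_2^2(v_n(t),v_m(t))\le T\,e^{T/\eps}\int_0^\infty e^{-s/\eps}\int_M\abs{\partial_s d(v_n,v_m)}^2\Vg\,\d s\to 0$, so $\{v_n\}$ is $d_2$-Cauchy uniformly on compact time intervals, and the limit $u_\eps$ exists without invoking compactness of any sub-level or of $M$. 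Your remaining lower-semicontinuity step then closes the argument, and your uniqueness argument (strict positivity of the penalty term when $u\neq v$ because $d(u,v)\in H^1$ vanishes at $t=0$ and hence cannot have vanishing full gradient unless it vanishes identically) is fine as written.
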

\begin{proof}
As the functional $\mathcal{I}_\eps$
 is convex (strictly), the existence and uniqueness of a minimizer depends on lower semicontinuity of $\mathcal{I}_\eps$ on 
 $\mathfrak{V}_{{{u}_0}}$, which is inherited by lower semicontinuity of the Dirichlet Energy. 
\end{proof}

Now, let $u_\eps$ be the minimizer of $\mathcal{I}_\eps$ in $\mathfrak{V}_{{u}_0}$. 
As we are interested in the behavior as $\eps\to 0$ of $u_\eps$, we collect the energy estimates and the compactness 
properties of the family $\left\{u_\eps\right\}$. Note that some of these estimates and the 
resulting compactness properties remain valid without assuming that 
$X$ is a CAT(0) space. For instance, they can be used to study the WED approximation of the heat flow of harmonic maps into
a compact smooth Riemannian manifold without the curvature assumption. 

We have the following result that follows from \cite{rsss-wed}. However, for the sake of completeness, we will give 
a direct proof of the energy estimates as a consequence of proper inner variations with respect to the time variable and the space variable (see the subsection \ref{ssec:energy_bounds} below).

\begin{theorem}[Energy estimate and compactness]
\label{th:energy_est}
Let $M$ be a $n$-dimensional complete manifold with $\partial M=\emptyset$ and let $X$ be a CAT(0) metric space. 
Let ${u}_0\in H^1(M, X)$ and  let $u_\eps\in  \argmin_{v\in \mathfrak{V}_{{u}_0}}\mathcal{I}_\eps(v)$.
Then
\begin{equation}
\label{eq:convex_E_t}
t\mapsto E(u_\eps(t)) \quad \textrm{ is convex in } (0,+\infty)
\end{equation}
and, for any $t>s>0$, 
\begin{eqnarray}
\displaystyle\int_{s}^t\abs{u_\eps'}^2(\tau) \d \tau \le E(u_0),\label{eq:bound_dissipation}\\
\displaystyle\int_{s}^t E(u_\eps(\tau)) \d \tau\le ((t-s) +\eps) E(u_0), \label{eq:bound_energy}\\
\displaystyle \sup_{t\in (0,+\infty)}E(u_\eps(t))\le E({u}_0). \label{eq:better_energy}
\end{eqnarray}
Consequently, there exist $u\in 
\mathfrak{V}_{{u}_0}$
and a not relabelled sequence of $\eps\searrow 0$ such that 
\begin{equation}
\label{eq:compactness}
\begin{split}
&u_{\eps}(t) \to u(t) \qquad \textrm{ in } L^2_{\textrm{loc}}(M, X) \qquad \forall t\in [0,+\infty),\\
&\liminf_{\eps\to 0}\int_{0}^\infty E(u_\eps(t))\d t \ge \int_{0}^\infty E(u(t))\d t,\\
&\abs{u_\eps'}\weak v \qquad \textrm{ in } L^2_{\textrm{loc}}(0,+\infty), \,\,\,\,v\ge \abs{u'} \,\,\,\textrm{ for a.e. in } (0,+\infty).\\
\end{split}
\end{equation}
\end{theorem}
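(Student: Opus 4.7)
The plan is to combine the minimality of $u_\eps$ with a few well-chosen competitors (constant, time-truncation, time-reparametrization, and $L^2(M,X)$-geodesic interpolation) to extract a pointwise Euler--Lagrange identity and scalar energy bounds, then invoke the CAT(0) geodesic convexity of $E$ (Lemma~\ref{lemma:convexity_E}) to upgrade these to the convexity of $t\mapsto E(u_\eps(t))$, and finally to run an abstract Ascoli--Arzel\`a argument in $(L^2(M,X),d_2)$.

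Testing first with the constant competitor $v\equiv u_0\in\mathfrak{V}_{u_0}$, whose WED energy equals $E(u_0)$ since $\int_0^\infty \eps^{-1}e^{-t/\eps}\,dt=1$, gives immediately $\mathcal{I}_\eps(u_\eps)\le E(u_0)$. Testing next with the time-truncation competitor $w(t):=u_\eps(\min(t,s))$ yields the sharper semigroup estimate
\[
\int_s^\infty\frac{e^{-t/\eps}}{\eps}\Bigl(\tfrac{\eps}{2}|u_\eps'|^2+E(u_\eps)\Bigr)\,dt\le e^{-s/\eps}E(u_\eps(s)).
\]
The pointwise Euler--Lagrange identity comes from the time inner variation $v_\lambda(t)=u_\eps(t+\lambda\eta(t))$ with $\eta\in C_c^\infty((0,\infty))$; after a change of variable in $\mathcal{I}_\eps(v_\lambda)$ and differentiation at $\lambda=0$, criticality becomes
\[
\tfrac{d}{dt}\bigl[E(u_\eps)-\tfrac{\eps}{2}|u_\eps'|^2\bigr]=-|u_\eps'|^2\quad\text{in }\mathscr{D}'(0,\infty),
\]
so that $F(t):=E(u_\eps(t))-\tfrac{\eps}{2}|u_\eps'|^2(t)$ is nonincreasing and $\int_s^t|u_\eps'|^2\,d\tau=F(s)-F(t)$.

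To obtain the convexity \eqref{eq:convex_E_t} I would use, on a short interval $[a,b]$, the $L^2(M,X)$-geodesic competitor $v(t):=\gamma_{(t-a)/(b-a)}$ joining $u_\eps(a)$ to $u_\eps(b)$ at constant speed (and $v=u_\eps$ elsewhere). Lemma~\ref{lemma:convexity_E} bounds $E(v(t))$ by the linear interpolate of $E(u_\eps(a))$ and $E(u_\eps(b))$, while $|v'|(t)=d_2(u_\eps(a),u_\eps(b))/(b-a)$ is controlled by $\|\,|u_\eps'|\,\|_{L^2(a,b)}/(b-a)^{1/2}$ via Cauchy--Schwarz. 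A second-order Taylor analysis of $\mathcal{I}_\eps(u_\eps)\le\mathcal{I}_\eps(v)$ as $h:=(b-a)/2\to 0$ yields the distributional convexity of $t\mapsto E(u_\eps(t))$; together with weighted $L^1$-integrability this forces the pointwise bound $E(u_\eps(t))\le E(u_0)$, i.e.\ \eqref{eq:better_energy}. Then \eqref{eq:bound_energy} follows by integration, and \eqref{eq:bound_dissipation} by inserting \eqref{eq:better_energy} into the integrated Euler--Lagrange identity above.

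For the compactness, \eqref{eq:bound_dissipation} and the triangle inequality in $(L^2(M,X),d_2)$ give the uniform H\"older estimate $d_2(u_\eps(t),u_\eps(s))\le \sqrt{(t-s)E(u_0)}$; the $H^1(M,X)$-uniform bound from \eqref{eq:better_energy} together with the Korevaar--Schoen compact embedding $H^1\hookrightarrow L^2_{\mathrm{loc}}(M,X)$ (Appendix~A) gives pointwise-in-time precompactness. A diagonal Ascoli--Arzel\`a argument produces the limit curve $u$ with the stated convergences; the $\liminf$-inequality for $\int_0^\infty E$ is Fatou plus $L^2$-lower-semicontinuity of $E$, and the weak $L^2$-convergence of $|u_\eps'|$ with $v\ge|u'|$ a.e.\ is the standard closure property of metric derivatives. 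The main obstacle is the convexity step: the geodesic competitor produces only an integrated inequality over $[a,b]$, and extracting distributional convexity of $E(u_\eps(\cdot))$ requires a careful second-order expansion in $h$, in which the CAT(0) hypothesis enters essentially through Lemma~\ref{lemma:convexity_E}.
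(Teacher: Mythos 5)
Your outline reuses the same ingredients that appear in and around the paper's proof --- constant competitor giving $\mathcal{I}_\eps(u_\eps)\le E(u_0)$, inner time variation giving the Euler--Lagrange identity $\tfrac{d}{dt}\bigl[E(u_\eps)-\tfrac{\eps}{2}|u_\eps'|^2\bigr]=-|u_\eps'|^2$, and the geodesic convexity of $E$ from Lemma~\ref{lemma:convexity_E}. The paper itself, however, does not prove \eqref{eq:convex_E_t} and \eqref{eq:better_energy} here: it delegates these to \cite[Theorem~6.4 and Lemma~6.6]{rsss-wed}, and gives a self-contained argument only for \eqref{eq:bound_dissipation}--\eqref{eq:bound_energy}, carried out in Subsection~\ref{ssec:energy_bounds} by choosing $Y=(0',Z(t))$ with $Z(t)=\int_0^t\eta$ to produce the scalar identity $2\eps I_\eps(t)+\tfrac1\eps e^{t/\eps}H_\eps(t)-L_\eps(t)=0$ and integrating, combined only with the constant-competitor bound $\tfrac1\eps H_\eps(0)\le E(u_0)$. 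Your attempt to prove all four items from scratch is therefore more ambitious than the paper's own proof, but the convexity/sup block has genuine gaps.

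First, the order of dependencies is off. You derive \eqref{eq:bound_dissipation} by inserting the pointwise bound \eqref{eq:better_energy} into the integrated identity $\int_s^t|u_\eps'|^2=F(s)-F(t)$, so both \eqref{eq:bound_dissipation} and \eqref{eq:bound_energy} become downstream of the CAT(0)-convexity step. The paper stresses, correctly, that these two estimates need no curvature hypothesis; the direct route via the stationarity identity and the constant competitor bypasses the convexity entirely, and you should re-order so as not to lose this.

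Second, the convexity step itself is not sound as written. The constant-speed geodesic competitor $v(t)=\gamma_{(t-a)/(b-a)}$ contributes a speed term $\tfrac{\eps}{2}|v'|^2=\tfrac{\eps}{2}d_2^2(u_\eps(a),u_\eps(b))/(b-a)^2$; Cauchy--Schwarz gives $|v'|^2\le(b-a)^{-1}\int_a^b|u_\eps'|^2$, but the relevant comparison is between $\int_a^b e^{-t/\eps}|v'|^2\,dt$ and $\int_a^b e^{-t/\eps}|u_\eps'|^2\,dt$, and the nonconstant weight correlates with $|u_\eps'|^2$ in a way whose sign is not controlled --- so the speed correction is not $\le 0$ in general. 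Moreover, a ``second-order Taylor analysis as $h\to 0$'' at Lebesgue points does not on its own yield distributional convexity of $t\mapsto E(u_\eps(t))$ without further regularity input: a local infinitesimal nonnegativity of a second difference is weaker than $E(u_\eps(\cdot))''\ge0$ in $\mathscr D'$. Finally, the phrase ``together with weighted $L^1$-integrability this forces the pointwise bound $E(u_\eps(t))\le E(u_0)$'' hides a missing step. Convexity plus $\int_0^\infty e^{-t/\eps}E(u_\eps)\,dt<\infty$ plus $E\ge0$ forces $E(u_\eps(\cdot))$ to be nonincreasing, but the sharp sup bound $E(u_0)$ additionally requires control of the right limit at $t=0^+$; $L^2$-lower semicontinuity of $E$ gives $E(u_0)\le\liminf_{t\to0^+}E(u_\eps(t))$, which is the \emph{wrong} direction, so you need to establish that $u_\eps(t)\to u_0$ in energy (e.g.\ via the Lipschitz-up-to-the-boundary regularity of Theorem~\ref{th:ex_minimizer}, or via the relation $V_\eps(u_\eps(t))=E(u_\eps(t))-\tfrac{\eps}{2}|u_\eps'|^2(t)$ together with $V_\eps\le E$), neither of which is invoked in your outline.
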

\begin{proof}
See Corollary 4.4 and Theorem 2.6 in \cite{rsss-wed} for \eqref{eq:bound_dissipation} and \eqref{eq:bound_energy}. 
These two estimates hold without any assumption on the curvature of the target. Contrarily, \eqref{eq:convex_E_t} and \eqref{eq:better_energy} require the geodesic convexity of the energy $E$
(see \cite[Theorem 6.4 and Lemma 6.6]{rsss-wed}) which follows from the 
CAT(0) property. 
\end{proof}
\begin{remark}
Note that when we have some smoothness in the target (for instance when $X\hookrightarrow\R^L$ is a compact Riemannian submanifold), 
the second and third in \eqref{eq:compactness} can be rewritten as 
\[
\begin{split}
u_{\eps} \longrightarrow u\qquad \hbox{ weakly in } L^2(0,T; H^1(M, X)),\\
\partial_t u_{\eps}\longrightarrow \partial_t u\qquad \hbox{ weakly in } L^2(0,T; L^2(M, X)).
\end{split}
\]
\end{remark}

The WED approximation of the heat flow of harmonic maps has an interesting geometric flavor.
We introduce on the space time cylinder $M_\infty$ the following $\eps$-dependent metric, which we call  
WED metric. 
For $t\in (0,+\infty)$, we define 
${g}_\eps = e^{2\phi_\eps(t)}\left(\eps g + \d t^2 \right)$
where  $\phi_\eps(t)= \frac{1}{n-1}\left(-\frac{t}{\eps}-\frac{n}{2}\log\eps\right)$, the conformal metric to the cylindrical one $g_{\cy, \eps}\dpu \frac{1}{\eps}\d t^2 + g.$
The following theorem is a classical result.

\begin{theorem}[WED Minimizers are harmonic maps]
\label{th:ex_minimizer}
Let $X$ be a CAT(0) metric space. For any $\bar{u}\in D(E)$ we let $u_\eps$ be the unique minimizer of the 
the WED functional $\mathcal{I}_\eps$. 
Then, 
 $u_\eps$ is a (minimizing) harmonic map from $(M\times\R_+, {g}_\eps)$ to $X$. Furthermore, for every $\eps >0$, the minimizer $u_\eps$ is Lipschitz on $M \times (0,\infty)$. 
 \end{theorem}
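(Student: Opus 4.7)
The strategy is to recognize that the WED functional $\mathcal{I}_\eps$ is, up to a factor of $2$, precisely the Korevaar--Schoen Dirichlet energy for maps $v:(M\times\R_+,g_\eps)\to X$, once the specific conformal weight $\phi_\eps$ is chosen. The first assertion of the theorem then becomes a tautology, and the Lipschitz regularity follows from the classical regularity theory for energy minimizing maps into CAT$(0)$ targets.

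To carry this out, I would first work with the rescaled cylindrical metric $\tilde g_\eps\dpu\eps g+\d t^2$ on the $(n{+}1)$-dimensional manifold $M\times\R_+$. A direct coordinate computation yields
\[
\abs{\D v}^2_{\tilde g_\eps}=\abs{\partial_t v}^2+\tfrac{1}{\eps}\abs{\nabla v}^2,\qquad \d\mathrm{vol}_{\tilde g_\eps}=\eps^{n/2}\,\d t\,\Vg,
\]
where $\nabla v$ denotes the spatial (i.e. $M$-) gradient. Under the conformal change $g_\eps=e^{2\phi_\eps}\tilde g_\eps$, the Korevaar--Schoen conformal change formula in dimension $n+1$ (an algebraic identity valid for any metric target) reads
\[
\abs{\D v}^2_{g_\eps}\,\Vge=e^{(n-1)\phi_\eps}\,\abs{\D v}^2_{\tilde g_\eps}\,\d\mathrm{vol}_{\tilde g_\eps}.
\]
Substituting $(n-1)\phi_\eps=-t/\eps-(n/2)\log\eps$, the factor $e^{(n-1)\phi_\eps}\,\eps^{n/2}$ collapses to $e^{-t/\eps}$, yielding the key identity
\[
\tfrac{1}{2}\int_{M\times\R_+}\abs{\D v}^2_{g_\eps}\,\Vge=\mathcal{I}_\eps(v)
\]
for every admissible $v$.

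Since $u_\eps$ minimizes $\mathcal{I}_\eps$ among maps with trace $\bar u$ on $M\times\{0\}$, this identity shows that $u_\eps$ minimizes the $g_\eps$-Dirichlet energy among all maps $(M\times\R_+,g_\eps)\to X$ with Dirichlet datum $\bar u$ on $\partial(M\times\R_+)=M\times\{0\}$; hence $u_\eps$ is a minimizing harmonic map from $(M\times\R_+, g_\eps)$ into the CAT$(0)$ space $X$ in the sense of Korevaar--Schoen. The second assertion now follows by invoking the interior Lipschitz regularity theorem of Korevaar and Schoen~\cite[Thm.~2.4.6]{KS}: any energy minimizing map from a smooth Riemannian domain into an NPC space is locally Lipschitz in the interior. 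Applied to the open set $M\times(0,\infty)$ equipped with the smooth metric $g_\eps$, this gives the Lipschitz continuity of $u_\eps$ on $M\times(0,\infty)$.

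The substantive step is the algebraic bookkeeping: one has to verify that $(n-1)\phi_\eps$ exactly balances the Jacobian $\eps^{n/2}$ and produces the $\eps^{-1}e^{-t/\eps}$ weight of $\mathcal{I}_\eps$; this is what dictates both the rescaling $\eps g+\d t^2$ (rather than $\eps^{-1}\d t^2+g$) and the precise form of $\phi_\eps$, and implicitly requires $n\ge 2$ (the case $n=1$ is degenerate and would require a separate argument). Once the conformal identification is secured, the NPC hypothesis on $X$ enters only through the Korevaar--Schoen Lipschitz theorem, and the unboundedness of the time half-line is harmless since the regularity statement is purely local.
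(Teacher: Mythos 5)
Your proof is correct and follows essentially the same route as the paper: rewrite $\mathcal{I}_\eps$ as one-half the $g_\eps$-Dirichlet energy by verifying that the conformal factor $e^{(n-1)\phi_\eps}$ cancels the $\eps^{n/2}$ Jacobian and produces the $\frac{e^{-t/\eps}}{\eps}$ weight, then invoke the Korevaar--Schoen interior Lipschitz theorem for energy minimizers into NPC targets. The paper's proof just states the conformal rewriting and cites \cite{KS} without spelling out the exponent bookkeeping or flagging the degenerate case $n=1$, both of which you handle correctly.
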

\begin{proof}
The WED functional $\mathcal{I}_\eps$ rewrites, once we introduce the metric ${g}_\eps$, as
\[
\label{eq:harmonic_wed}
\mathcal{I}_\eps(v) = \int_{0}^{+\infty}\frac{e^{-t/\eps}}{\eps}\left\{\int_M\left(\frac{\eps}{2}\abs{\partial_t v}^2  + \frac{1}{2}\abs{\nabla v}^2 \right)\Vg\right\}\d t = \iint_{M\times \R_+}\bar{e}_{\eps}(v) \Vge, 
\]
where 
\[
\bar{e}_{\eps}(v) \dpu \frac{1}{2}\abs{\bar{\nabla}v}_{{g}_\eps}^2, \ \ \ \Vge =\eps^{n/2}e^{(n+1)\phi_\eps(t)}\Vg\d t.
\]
The regularity statement follows from \cite{KS} (see aslo \cite{GS}, \cite{KS2}, or Zhang-Zhu \cite{zhang-zhu}). We would like also to point out some Bochner type results into NPC spaces by Mondino-Semola \cite{mondino-semola} and Gigli \cite{gigliLip} (see also \cite{freidin}). 
\end{proof}
\begin{remark}[Digression on general smooth manifolds]
We would like to point out that NPC character in the previous statement amounts to the geodesic convexity of the functional. Whenever the target $X$ is a smooth Riemannian manifold much more can be said.  Assume for now that $X=N$ is a smooth Riemannian manifold isometrically embedded in $\R^L$ by Nash's theorem, then $u_\eps$ addtionnally satisfies the following equation in the distributional sense 
\begin{equation}
\label{eq:EL}
\begin{cases}
-\eps\partial_t^{2}u_\eps + \partial_t u_\eps -\Delta u_\eps \perp T_{u_\eps}N &\textrm{ in }  M\times (0,+\infty). \\
u(x,0) = u_0(x) &\textrm{ on } M.
\end{cases}
\end{equation}
Finally, there exists a closed $\mathcal{S}_\eps\subset M\times (0,\infty)$, with Hausdorff dimension $\textrm{dim}_{\mathscr{H}}(\mathcal{S}_\eps)\le n-2$,
such that $u_\eps\in C^{\infty}(M\times (0,\infty)\setminus \mathcal{S}_\eps; N)$.
If, in addition, 
the sectional curvature $K_N$ of $N$ is nonpositive, then $u_\eps$ is smooth in $M\times (0,\infty)$ (i.e. $\mathcal S_\eps =\emptyset $). This follows from classical results by Schoen-Uhlenbeck \cite{schoen-uhlenbeck} (see also \cite{bookLW}). 
\end{remark}

\section{The Convergence proof for CAT(0) targets}
\label{sec:conve_general}
In this section, we prove that as $\eps\to 0$, the minimizer $u_\eps$ of the WED functional $\mathcal{I}_\eps$ converges
to the unique suitable weak solution of the heat flow of harmonic maps into the CAT(0) metric space $X$. 
Even if the proof of the convergence would follow from the results in \cite{rsss-wed}, we fully exploit the CAT(0) hypothesis 
and provide a slightly different argument. 
%
%
\subsection{ The Value Function and the Dynamic Programming Principle}

The main character of the analysis in \cite{rsss-wed} is the value function for the minimization of  $\mathcal{I}_\eps$. 

The value function $V_\eps: L^2(M, X)\to [0,+\infty]$ is 
\begin{equation}
\label{eq:value_functional}
V_\eps(u) :=\min_{v\in \mathfrak{V}_{u}} \mathcal{I}_\eps(v)
\end{equation}
and satisfies (see \cite[Lemma 5.1]{rsss-wed}, \cite{rssscras} and \cite{stefanelli2024report}) 
\begin{lemma}
\label{lem:value_function_basic}
The following hold
\begin{enumerate}
\item $V_\eps$ is lower semicontinuous in $L^2(M, X)$ and verifies 
\begin{equation}
\label{eq:upperV}
0\le V_\eps(u)\le E(u) \qquad \forall u\in L^2(M,  X).
\end{equation}
\item For every $u\in L^2(M,  X)$, the map $\eps\mapsto V_\eps(u)$ is non increasing, namely,
\begin{equation}
\label{eq:mono_V}
V_{\eps_1}(u)\le V_{\eps_0}(u)\qquad \forall \eps_1\ge \eps_0.
\end{equation} 
\item For any $u\in L^2(M, X)$, there holds
\begin{equation}
\label{eq:conv_below}
V_\eps(u)\nearrow E(u)\qquad \textrm{ as }\eps\searrow 0.
\end{equation}
\item For any $u_\eps\xrightarrow{\eps\to 0}u$ in $L^2(M, X)$, there holds
\begin{equation}
\label{eq:gammaliminf}
E(u) \le \liminf_{\eps\to 0}V_\eps(u_\eps).
\end{equation}
\end{enumerate}
\end{lemma}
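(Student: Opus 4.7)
The plan is to treat the four claims in order; (3) follows immediately from (1) and (4). For (1), the upper bound $V_\eps(u)\le E(u)$ follows by testing the minimization defining $V_\eps(u)$ against the constant curve $v\equiv u$, which belongs to $\mathfrak{V}_u$, has $|v'|\equiv 0$, and gives $\mathcal{I}_\eps(v)=E(u)\int_0^\infty \eps^{-1}e^{-t/\eps}\d t=E(u)$; non-negativity of $V_\eps$ is obvious. For lower semicontinuity on $L^2(M,X)$, I take $u_n\to u$ with $\liminf V_\eps(u_n)$ finite, let $v_n\in\mathfrak{V}_{u_n}$ be the minimizers, and apply the uniform bounds of Theorem \ref{th:energy_est} to extract a subsequence converging to a curve $v$ with $v(0)=u$; lower semicontinuity of $\mathcal{I}_\eps$ (inherited from the l.s.c.\ of $E$ and of the metric derivative functional) then yields $V_\eps(u)\le\mathcal{I}_\eps(v)\le\liminf \mathcal{I}_\eps(v_n)$.

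For (2), I would introduce the rescaled curve $\tilde v(s):=v(\eps s)$ and rewrite
\begin{equation*}
\mathcal{I}_\eps(v)=\int_0^\infty e^{-s}\Big(\frac{1}{2\eps}|\tilde v'|^2(s)+E(\tilde v(s))\Big)\d s.
\end{equation*}
Given $\eps_1\ge \eps_0$ and the minimizer $w$ for $\mathcal{I}_{\eps_0}$, the competitor $z(t):=w(\eps_0 t/\eps_1)$ belongs to $\mathfrak{V}_u$ and has the same rescaled trajectory as $w$, so only the coefficient in front of the kinetic term decreases from $1/(2\eps_0)$ to $1/(2\eps_1)$: this gives $V_{\eps_1}(u)\le \mathcal{I}_{\eps_1}(z)\le \mathcal{I}_{\eps_0}(w)=V_{\eps_0}(u)$.

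The bulk of the work is (4), from which (3) follows at once: by (2) the family $V_\eps(u)$ is non-decreasing as $\eps\to 0$ and by (1) bounded above by $E(u)$, so the limit $L:=\lim_{\eps\to 0}V_\eps(u)\le E(u)$ exists, and applying (4) with the constant sequence $u_\eps\equiv u$ provides the matching lower bound $E(u)\le L$. For (4), let $v_\eps\in\mathfrak{V}_{u_\eps}$ be the minimizer, so that $V_\eps(u_\eps)=\mathcal{I}_\eps(v_\eps)$; dropping the nonnegative kinetic term yields
\begin{equation*}
V_\eps(u_\eps)\ge\int_0^\infty \frac{e^{-t/\eps}}{\eps}E(v_\eps(t))\,\d t.
\end{equation*}
By Theorem \ref{th:energy_est} the function $t\mapsto E(v_\eps(t))$ is convex and bounded above (by $E(u_\eps)$), hence non-increasing on $[0,\infty)$, as any convex function on $[0,\infty)$ bounded above is non-increasing. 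For $\eta>0$ and $t\in [0,\eta]$ this gives $E(v_\eps(t))\ge E(v_\eps(\eta))$, whence
\begin{equation*}
V_\eps(u_\eps)\ge E(v_\eps(\eta))\bigl(1-e^{-\eta/\eps}\bigr).
\end{equation*}
Sending $\eps\to 0$ with $\eta$ fixed, the compactness in Theorem \ref{th:energy_est} (adapted to varying initial data $u_\eps\to u$) provides a limit curve $v^{\ast}$ with $v^{\ast}(0)=u$ and $v_\eps(\eta)\to v^{\ast}(\eta)$ in $L^2(M,X)$, and l.s.c.\ of $E$ produces $\liminf_\eps V_\eps(u_\eps)\ge E(v^{\ast}(\eta))$ for every $\eta>0$. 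Letting $\eta\to 0$, the absolute continuity of $v^{\ast}$ gives $v^{\ast}(\eta)\to u$ in $L^2(M,X)$, and a final application of l.s.c.\ yields $E(u)\le\liminf_\eps V_\eps(u_\eps)$.

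I expect the only genuine subtlety to be the compactness step in (4) when the initial data $u_\eps$ converge only in $L^2(M,X)$: a uniform $H^1$-bound on $u_\eps$ is needed to invoke Theorem \ref{th:energy_est}. If $\sup_\eps E(u_\eps)=+\infty$, however, lower semicontinuity of $E$ forces $E(u)=+\infty$, and the very same inequality displayed above forces $V_\eps(u_\eps)\to+\infty$ along any subsequence, so the conclusion is trivially true. All other ingredients are standard: the probability weight $\eps^{-1}e^{-t/\eps}\d t$ integrating to one, convexity of $t\mapsto E(v_\eps(t))$ coming from the CAT(0) structure via Theorem \ref{th:energy_est}, and a straightforward rescaling.
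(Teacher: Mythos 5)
The paper itself does not prove this lemma; it simply cites \cite[Lemma~5.1]{rsss-wed}, \cite{rssscras} and \cite{stefanelli2024report}, so there is no ``paper's proof'' to compare against. Your items (1)--(3) are fine: the constant-curve competitor gives the upper bound, the rescaling $\tilde{v}(s)=v(\eps s)$ correctly exposes the monotonicity in $\eps$, and (3) is an immediate consequence of (1), (2) and (4). The body of your argument for (4) --- the bound $V_\eps(u_\eps)\ge E(v_\eps(\eta))(1-e^{-\eta/\eps})$ coming from the convexity and upper bound of $t\mapsto E(v_\eps(t))$, followed by compactness and two applications of lower semicontinuity --- is also correct \emph{when $\sup_\eps E(u_\eps)<\infty$.}

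The gap is in your last paragraph. You assert that ``if $\sup_\eps E(u_\eps)=+\infty$, lower semicontinuity of $E$ forces $E(u)=+\infty$.'' This reverses the direction of lower semicontinuity: l.s.c.\ gives $E(u)\le\liminf_\eps E(u_\eps)$, which only bounds $E(u)$ from \emph{above} by the liminf and says nothing when that liminf is $+\infty$. A concrete counterexample: on a compact $M$ with $X=\R$, take any $w\in L^2(M)\setminus H^1(M)$ and $u_\eps:=\eps\, w\to 0=:u$ in $L^2$, so that $E(u_\eps)=+\infty$ for every $\eps$ yet $E(u)=0$. Worse, precisely in this regime your derivation of $V_\eps(u_\eps)\ge E(v_\eps(\eta))(1-e^{-\eta/\eps})$ collapses: the convexity \eqref{eq:convex_E_t} and the bound \eqref{eq:better_energy} in Theorem~\ref{th:energy_est} are established only for $u_0\in H^1(M,X)$, so you cannot invoke them to conclude that $t\mapsto E(v_\eps(t))$ is bounded above and hence non-increasing. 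The reduction is therefore circular, and the case where $V_\eps(u_\eps)$ stays bounded while $E(u_\eps)=+\infty$ (which can occur, since $V_\eps$ has a strictly larger domain than $E$, being finite as soon as the initial datum is an admissible trace) is genuinely unaddressed.

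A way to close the gap without any convexity input: write, after dropping the kinetic term and substituting $t=\eps s$,
\begin{equation*}
V_\eps(u_\eps)\ \ge\ \int_0^\infty \frac{e^{-t/\eps}}{\eps}\,E(v_\eps(t))\,\d t \ =\ \int_0^\infty e^{-s}\,E\bigl(v_\eps(\eps s)\bigr)\,\d s .
\end{equation*}
Along a subsequence with $V_\eps(u_\eps)\le C$, the kinetic part of $\mathcal{I}_\eps(v_\eps)$ also obeys $\tfrac{1}{2}\int_0^\infty e^{-\tau/\eps}\abs{v_\eps'}^2\,\d\tau\le C$, so for each fixed $s$
\begin{equation*}
d_2\bigl(v_\eps(\eps s),u_\eps\bigr)\ \le\ \int_0^{\eps s}\abs{v_\eps'}\,\d\tau\ \le\ (\eps s)^{1/2}\Bigl(\int_0^{\eps s}\abs{v_\eps'}^2\,\d\tau\Bigr)^{1/2}\ \le\ (\eps s)^{1/2}\bigl(2C e^{s}\bigr)^{1/2}\ \xrightarrow{\eps\to0}\ 0,
\end{equation*}
hence $v_\eps(\eps s)\to u$ in $L^2(M,X)$ and, by l.s.c.\ of $E$, $\liminf_\eps E(v_\eps(\eps s))\ge E(u)$ for every $s>0$. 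Fatou's lemma then yields $\liminf_\eps V_\eps(u_\eps)\ge\int_0^\infty e^{-s}E(u)\,\d s=E(u)$, and this only uses the bound on $V_\eps(u_\eps)$, never on $E(u_\eps)$.
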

Since the value function is defined via the minimization of the geodesically convex $\mathcal{I}_\eps$,
we expect that $V_\eps$ enjoys the geodesic convexity as well.
We prove the following.
\begin{prop}
\label{prop:convex_V}
Let $X$ be a CAT(0) metric space.  
Then, for any $\eps>0$, the value function $V_\eps$ is 
geodesically convex. 
\end{prop}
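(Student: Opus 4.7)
The plan is to prove geodesic convexity of $V_\eps$ by transporting the convexity of $\mathcal{I}_\eps$ established in Theorem~\ref{th:convexity_wed} to the value function level, using the minimizers as concrete competitors. Fix $u_0, u_1 \in L^2(M, X)$; without loss of generality, assume both $V_\eps(u_0)$ and $V_\eps(u_1)$ are finite (otherwise the convexity inequality is trivial). Let $\gamma_s$, $s\in [0,1]$, denote the unique minimal geodesic in $(L^2(M, X), d_2)$ joining $u_0$ and $u_1$, which exists because $L^2(M, X)$ inherits the CAT(0) property, as recalled around \eqref{eq:convex_d}.

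Next, invoking Theorem~\ref{th:wp_minimum}, let $u_\eps^0\in \mathfrak{V}_{u_0}$ and $u_\eps^1\in \mathfrak{V}_{u_1}$ be the unique minimizers of $\mathcal{I}_\eps$, so that $\mathcal{I}_\eps(u_\eps^i) = V_\eps(u_i)$ for $i=0,1$. For each $t\ge 0$, define $w_s(t)$ to be the unique constant speed geodesic in $L^2(M, X)$ from $u_\eps^0(t)$ to $u_\eps^1(t)$ evaluated at parameter $s$; concretely, for a.e. $x\in M$, $w_s(x,t)$ is obtained by geodesically interpolating $u_\eps^0(x,t)$ and $u_\eps^1(x,t)$ in the CAT(0) target $X$. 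By uniqueness of geodesics in $L^2(M,X)$, the curve $s\mapsto w_s(0)$ coincides with $s\mapsto \gamma_s$, since both join $u_0$ and $u_1$.

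Then I would check that $w_s\in \mathfrak{V}_{\gamma_s}$ for each $s\in (0,1)$. This amounts to verifying the integrability condition in \eqref{eq:Hut}, which is a direct consequence of Theorem~\ref{th:convexity_wed} applied to $u_\eps^0$, $u_\eps^1$ and the geodesic $w_s$: the inequality
\begin{equation*}
\mathcal{I}_\eps(w_s) \le (1-s)\mathcal{I}_\eps(u_\eps^0) + s\,\mathcal{I}_\eps(u_\eps^1)
= (1-s)V_\eps(u_0) + s\,V_\eps(u_1)<\infty,
\end{equation*}
together with the finiteness of $E(\gamma_s)$ coming from Lemma~\ref{lemma:convexity_E}, gives admissibility.

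Finally, since $w_s$ is an admissible competitor in the definition of $V_\eps(\gamma_s)$, by \eqref{eq:value_functional}
\begin{equation*}
V_\eps(\gamma_s) \le \mathcal{I}_\eps(w_s) \le (1-s)V_\eps(u_0) + s\,V_\eps(u_1),
\end{equation*}
which is exactly geodesic convexity. The only step requiring care is the measurability and admissibility of the interpolated map $w_s$, i.e.\ ensuring that the pointwise geodesic interpolation assembles into a well-defined element of $H^1_{\mathrm{loc}}(M_\infty, X)$ satisfying the weighted integrability bound; this is handled by the pointwise NPC inequality \eqref{NPC1} applied in the product space and the quantitative bound inherited from Theorem~\ref{th:convexity_wed}, the exact argument used in \cite{KS} and in the proof of that theorem. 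Everything else is a straightforward application of convexity of $\mathcal{I}_\eps$ plus the optimality of $u_\eps^0$ and $u_\eps^1$.
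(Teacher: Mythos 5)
Your proof is correct and takes essentially the same route as the paper: both hinge on interpolating the trajectories fiberwise (in $t$) by geodesics in $L^2(M,X)$, applying the convexity of $\mathcal{I}_\eps$ from Theorem~\ref{th:convexity_wed}, and noting that the interpolant is admissible for $\gamma_s$. The only cosmetic difference is that you plug in the optimal minimizers $u_\eps^0$, $u_\eps^1$ directly, whereas the paper works with arbitrary competitors $u\in\mathfrak{V}_{\bar u}$, $v\in\mathfrak{V}_{\bar v}$ and then takes the infimum — equivalent once Theorem~\ref{th:wp_minimum} is in hand.
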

\begin{proof}
As in the proof of Theorem \ref{th:convexity_wed}, we take
$\bar{u}$ and $\bar{v}$ in $H^1(M, X) = D(E)$ and denote by $\bar{\gamma}_s$ the unique geodesic connecting them. 
Observe that $\bar{\gamma}_s\in D(E)$ for all $s\in(0,1)$. 
Now, take $u\in \mathfrak{V}_{\bar{u}}$ and $v\in \mathfrak{V}_{\bar{v}}$ and consider the geodesic $\gamma_s(t)$ connecting $u(t)$ with $v(t)$ for almost any $t\in (0,+\infty)$. In particular, note that for any $s\in (0,1)$ there holds $\gamma_s
\in \mathfrak{V}_{\bar{\gamma_s}}$.
Thanks to Theorem \ref{th:convexity_wed} 
\[
\begin{split}
V_\eps(\bar{\gamma}_s)\le \mathcal{I}_\eps(\gamma_s) &= \int_{0}^{+\infty}\frac{e^{-t/\eps}}{\eps}\left(\frac{\eps}{2}\abs{\gamma_s'}^2(t)+E(\gamma_s(t))\right)\d t\\
&\le (1-s)\mathcal{I}_\eps(u) + s\mathcal{I}_\eps(v).
\end{split}
\]
As $u$ and $v$ are arbirtrary in $\mathfrak{V}_{\bar{u}}$ and in $\mathfrak{V}_{\bar{v}}$, respectively, we get 
\[
V_\eps(\bar{\gamma}_s)\le (1-s)V_\eps(\bar{u}) + sV_\eps(\bar{v}),
\]
namely the geodesic convexity of $V_\eps$. 
\end{proof}


The fundamental property of the value function $V_\eps$ 
is the fact that it satisfies the {\itshape Dynamic Programming Principle} (see \cite[Proposition 4.2]{rsss-wed}). Thus, $\forall T>0$ we have that 
\begin{equation}
\label{eq:dyn_prog_princ}
V_\eps({u}) = \min_{v\in \mathfrak{V}_{{u}}}\left(\int_{0}^T\frac{e^{-t/\eps}}{\eps}\left(\frac{\eps}{2}\abs{v'}^2(t)+ E(v(t))\right)\d t + V_\eps(v(T))e^{-T/\eps}\right).
\end{equation}
In particular, any minimizer $u_\eps\in \mathfrak{V}_{{u}}$ of $\mathcal{I}_\eps$ is 
a minimizer of the above minimum problem 
and thus
\begin{equation}
V_\eps({u})= \int_{0}^T\frac{e^{-t/\eps}}{\eps}\left(\frac{\eps}{2}\abs{u_\eps'}^2(t) + E(u_\eps(t))\right)\d t + V_\eps(u_\eps(T))e^{-T/\eps}
\end{equation}
We get the following (see \cite[Proposition 4.3]{rsss-wed})
\begin{prop}[Fundamental properties of the value function]
Let ${u}\in D(V_\eps)$ and $u_\eps\in \argmin_{v\in \mathfrak{V}_{{u}}} \mathcal I_\eps$. Then, $t\mapsto V_\eps(u_\eps(t))$ is absolutely continuous and satisfies
\begin{equation}
\label{eq:value_function_fundamental}
\begin{split}
-\frac{\d}{\d t}V_\eps(u_\eps(t)) &= \frac{1}{2}\abs{u_\eps'}^2(t) + \frac{1}{\eps}E(u_\eps(t))-\frac{1}{\eps}V_\eps(u_\eps(t))\qquad \textrm{ for a.a. }t\in (0,+\infty),\\
V_\eps(u_\eps(t)) &= E(u_\eps(t))-\frac{\eps}{2}\abs{u_\eps'}^2(t) \qquad \textrm{ for a.a. }t\in (0,+\infty).
\end{split}
\end{equation}
\end{prop}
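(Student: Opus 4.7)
The proposition combines a Hamilton--Jacobi-type ODE along the minimizer with a pointwise algebraic identity. I would prove them in sequence: the first from the Dynamic Programming Principle \eqref{eq:dyn_prog_princ}, and the second from an inner variation in time combined with the first. For the HJ identity, apply \eqref{eq:dyn_prog_princ} to the minimizer $u_\eps$ at two generic times $0\le s<T$, which rearranges (since $u_\eps$ itself attains the minimum) to
\[
V_\eps(u_\eps(s))e^{-s/\eps}-V_\eps(u_\eps(T))e^{-T/\eps}=\int_s^T\frac{e^{-t/\eps}}{\eps}\Big(\frac{\eps}{2}\abs{u_\eps'}^2(t)+E(u_\eps(t))\Big)\d t.
\]
Using $V_\eps\le E\le E({u})$ (Lemma \ref{lem:value_function_basic} and \eqref{eq:better_energy}), the boundary term at $T$ decays to zero as $T\to\infty$, so
\[
V_\eps(u_\eps(s))e^{-s/\eps}=\int_s^{+\infty}\frac{e^{-t/\eps}}{\eps}\Big(\frac{\eps}{2}\abs{u_\eps'}^2(t)+E(u_\eps(t))\Big)\d t.
\]
The right-hand side is a primitive of an $L^1(0,\infty)$ integrand by \eqref{eq:bound_dissipation}--\eqref{eq:bound_energy}, hence $s\mapsto V_\eps(u_\eps(s))e^{-s/\eps}$ is absolutely continuous; differentiating and rearranging yields the first identity.

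For the second identity I would use an inner variation in time. For $\varphi\in C_c^\infty(0,\infty)$ and $|\lambda|$ small, $\phi_\lambda(t)=t+\lambda\varphi(t)$ is a diffeomorphism of $[0,\infty)$ fixing $0$, so $u_\eps^\lambda\dpu u_\eps\circ\phi_\lambda\in\mathfrak{V}_{{u}}$, and by the metric chain rule $\abs{(u_\eps^\lambda)'}(t)=\phi_\lambda'(t)\abs{u_\eps'}(\phi_\lambda(t))$. Imposing $\frac{\d}{\d\lambda}\mathcal{I}_\eps(u_\eps^\lambda)\big|_{\lambda=0}=0$ (by minimality of $u_\eps$), changing variables $s=\phi_\lambda(t)$, integrating by parts, and using the arbitrariness of $\varphi$ yields the Noether-type pointwise identity
\[
\abs{u_\eps'}^2(s)+\frac{\d}{\d s}E(u_\eps(s))=\frac{\eps}{2}\frac{\d}{\d s}\abs{u_\eps'}^2(s)\qquad\textrm{for a.e. }s\in(0,\infty).
\]
Set $W(s)\dpu E(u_\eps(s))-V_\eps(u_\eps(s))-\frac{\eps}{2}\abs{u_\eps'}^2(s)$. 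A short calculation combining the HJ identity with the above yields $W'(s)=\frac{1}{\eps}W(s)$ a.e., so $W(s)=W(0)e^{s/\eps}$. Since $W(s)\le E(u_\eps(s))\le E({u})$, the case $W(0)>0$ is ruled out; and $W(0)<0$ would force $\abs{u_\eps'}^2(s)\ge\frac{2}{\eps}|W(0)|e^{s/\eps}-\frac{2}{\eps}E({u})$, contradicting $\abs{u_\eps'}^2\in L^1(0,\infty)$ from \eqref{eq:bound_dissipation}. Hence $W\equiv 0$, which is the second identity.

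The most delicate step is the rigorous justification of the inner time-variation in the CAT($0$) setting: one must verify that $u_\eps\circ\phi_\lambda$ remains admissible, that the metric chain rule for $\abs{u'}$ applies, and that one can differentiate $\mathcal{I}_\eps(u_\eps^\lambda)$ under the integral sign at $\lambda=0$. All three are standard for absolutely continuous curves with values in a Polish space---the Dirichlet-energy term depends only on the spatial map at fixed time, so only the kinetic term requires genuine care---and the Lipschitz regularity of $u_\eps$ (Theorem \ref{th:ex_minimizer}) combined with the energy estimates of Theorem \ref{th:energy_est} supply the uniform-in-$\lambda$ integrable majorants needed to pass the derivative inside the integral.
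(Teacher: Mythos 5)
Your proof is correct and uses the same two ingredients the cited reference \cite[Prop.~4.3]{rsss-wed} relies on (and which the paper itself develops in Section 4.3): the Dynamic Programming Principle for the Hamilton--Jacobi identity, and the time-inner variation (Noether) identity for the second relation. The only stylistic point is that your final step, solving the linear ODE $W'=W/\varepsilon$ and excluding both signs of $W(0)$ by integrability, is a slight detour; once you have the Noether identity $F'=-\abs{u_\eps'}^2$ for $F(t):=E(u_\eps(t))-\tfrac{\eps}{2}\abs{u_\eps'}^2(t)$, the algebraic identity follows directly by observing that $\tfrac{e^{-t/\eps}}{\eps}\bigl(E+\tfrac{\eps}{2}\abs{u_\eps'}^2\bigr)=-\tfrac{\d}{\d t}\bigl(e^{-t/\eps}F\bigr)$, so that integrating from $s$ to $\infty$ and using that the boundary term at infinity vanishes reproduces the DPP representation $V_\eps(u_\eps(s))e^{-s/\eps}=\int_s^\infty\tfrac{e^{-t/\eps}}{\eps}\bigl(\tfrac{\eps}{2}\abs{u_\eps'}^2+E\bigr)\d t$ as $e^{-s/\eps}F(s)$, giving $V_\eps(u_\eps(s))=F(s)$ with no case analysis. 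Both routes are valid; yours is merely less economical.
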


In finite dimension the value function satisfies a Hamilton Jacobi equation. When $E$ is convex, we have a similar result even in 
the infinite dimensional context. 
Following \cite{rsss-wed}, we set 
\begin{equation}
\label{eq:slopeV}
\abs{{\partial}V_\eps}(u):=\limsup_{v\xrightarrow{L^2} u}\frac{(V_\eps(u)-V_\eps(v))^+}{d_2(u,v)};
\quad
\abs{\tilde{\partial}V_\eps}(u):=\limsup_{v\xrightarrow{E} u}\frac{(V_\eps(u)-V_\eps(v))^+}{d_2(u,v)},
\end{equation}
where $v\xrightarrow{E} u$ means that $v\to u$ in $L^2(M, X)$ and $E(v)\longrightarrow E(u)$. 
Note that the definition of $\abs{{\partial}V_\eps}$ and $\abs{\tilde{\partial}V_\eps}$ implies that 
\begin{equation}
\label{eq:comparing_slope}
\abs{\tilde{\partial}V_\eps}(u)\le \abs{\partial V_\eps}(u),\qquad \forall u \in L^2(M, X).
\end{equation}

We thus have (see \cite[Theorem 7.1]{rsss-wed})
\begin{theorem}[Hamilton-Jacobi 
{equation} for $V_\eps$]
\label{th:betterV}
Assume $(X,d)$ is CAT(0) and 
$u\in D(E)$. Then the value function $V_\eps(u)$ satisfies the Hamilton-Jacobi equation
\begin{equation}
\label{eq:HJ-V}
\frac{1}{2}\abs{\tilde{\partial}V_\eps}^2(u)= \frac{E(u)-V_\eps(u)}{\eps}.
\end{equation} 
In particular, for any ${u}\in D(E)$ and $\eps>0$,  $u_\eps\in \argmin_{v\in \mathfrak{V}_{{u}}}\mathcal I_\eps(v)$ is 
a curve of maximal slope for $V_\eps$
with respect to the slope $\abs{\tilde{\partial}V_\eps}$, namely 
\begin{equation}
\label{eq:cmsV}
-\frac{\d}{\d t}V_\eps(u_\eps(t)) = \frac{1}{2}\abs{u_\eps'}^2(t) + \frac{1}{2}\abs{\tilde{\partial}V_\eps}^2(u_\eps(t)) \qquad \textrm{ for a.a. }t\in (0,+\infty). 
\end{equation}
\end{theorem}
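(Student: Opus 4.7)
The plan is to prove the Hamilton--Jacobi identity \eqref{eq:HJ-V} as two matching inequalities and then to read off \eqref{eq:cmsV} for free: substituting \eqref{eq:HJ-V} evaluated at $u_\eps(t)$ into the first identity of \eqref{eq:value_function_fundamental} converts its last two terms into $\tfrac{1}{2}|\tilde\partial V_\eps|^2(u_\eps(t))$, which is exactly the curve-of-maximal-slope equation. All the work therefore lies in \eqref{eq:HJ-V}, and both halves combine the Dynamic Programming Principle \eqref{eq:dyn_prog_princ} with the two identities \eqref{eq:value_function_fundamental}.

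For the lower bound $\tfrac{1}{2}|\tilde\partial V_\eps|^2(u)\ge \tfrac{E(u)-V_\eps(u)}{\eps}$, I would test the asymmetric slope against the curve $s\mapsto u_\eps(s)$ itself. Its admissibility in the limsup defining $|\tilde\partial V_\eps|(u)$ is easy: $u_\eps(s)\to u$ in $L^2$ by absolute continuity, while $E(u_\eps(s))\to E(u)$ follows by sandwiching lower semicontinuity of $E$ against the upper bound $\sup_t E(u_\eps(t))\le E(u)$ from Theorem \ref{th:energy_est}. The decisive algebraic cancellation is that the second identity of \eqref{eq:value_function_fundamental} collapses the first one to $-\tfrac{\d}{\d t}V_\eps(u_\eps(t))=|u_\eps'|^2(t)$. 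The slope quotient then dominates
\[
\frac{\int_0^s|u_\eps'|^2(\tau)\,\d\tau}{\int_0^s|u_\eps'|(\tau)\,\d\tau}\ge \sqrt{\tfrac{1}{s}\int_0^s|u_\eps'|^2(\tau)\,\d\tau}
\]
by Cauchy--Schwarz, and the $s\to 0^+$ limit returns $\sqrt{2(E(u)-V_\eps(u))/\eps}$ once more via the second identity of \eqref{eq:value_function_fundamental}.

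The upper bound is the delicate half and the main obstacle. For a generic perturbation $v\in D(E)$ I would feed the DPP \eqref{eq:dyn_prog_princ} the competitor $w\in\mathfrak{V}_u$ which traverses the $E$-convex geodesic $\bar\gamma$ of Lemma \ref{lemma:convexity_E} from $u$ to $v$ at constant speed $d_2(u,v)/T$ on $[0,T]$ and then sits at $v$ thereafter. A direct computation of the weighted integral, using $E(\bar\gamma_s)\le \max(E(u),E(v))$, produces
\[
V_\eps(u)-V_\eps(v)\le (1-e^{-T/\eps})\Bigl(\max(E(u),E(v))-V_\eps(v)+\tfrac{\eps\, d_2^2(u,v)}{2T^2}\Bigr).
\]
The crucial step is optimizing $T$: in the regime $T/\eps\to 0$ the prefactor reduces to $T/\eps$, and the critical value $T=d_2(u,v)\sqrt{\eps/(2(E(u)-V_\eps(u)))}$ equalizes the two contributions inside the parentheses, yielding the sharp bound $\bigl(V_\eps(u)-V_\eps(v)\bigr)/d_2(u,v)\le \sqrt{2(E(u)-V_\eps(u))/\eps}+o(1)$. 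The passage $v\xrightarrow{E}u$ is then painless: lower semicontinuity of $V_\eps$ from Lemma \ref{lem:value_function_basic} gives $\liminf V_\eps(v)\ge V_\eps(u)$, so $\limsup\bigl[\max(E(u),E(v))-V_\eps(v)\bigr]\le E(u)-V_\eps(u)$. This is also the structural reason why the slope is defined along $E$-convergent sequences rather than merely $L^2$-convergent ones: only this stronger convergence tames the excess energy term inherited from the geodesic interpolation.
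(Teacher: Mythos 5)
The paper itself offers no proof of Theorem~\ref{th:betterV}: it cites \cite[Theorem~7.1]{rsss-wed} verbatim. Your blind reconstruction is a correct, self-contained argument that in all likelihood mirrors the cited one in structure, since both sides of the Hamilton--Jacobi identity are forced by the Dynamic Programming Principle together with the two identities in \eqref{eq:value_function_fundamental}. Your derivation of \eqref{eq:cmsV} from \eqref{eq:HJ-V} is a one-line substitution and needs no correction; the lower bound along the trajectory $u_\eps(s)$ (admissibility via the sandwich $\liminf E(u_\eps(s))\ge E(u)=\sup_t E(u_\eps(t))$, then Cauchy--Schwarz, then the second line of \eqref{eq:value_function_fundamental} to evaluate the Ces\`aro mean of $|u_\eps'|^2$ at $s=0^+$) is exactly right, as is the DPP competitor argument for the upper bound with the observation that $E$-convergence is what makes the excess energy term disappear.

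Three cosmetic points worth tightening. First, the optimization in the upper bound does not actually require the asymptotic expansion of $1-e^{-T/\eps}$: the elementary bound $1-e^{-T/\eps}\le T/\eps$ applied to both summands yields the clean inequality
\begin{equation*}
V_\eps(u)-V_\eps(v)\le \frac{d_2^2(u,v)}{2T}+\frac{T}{\eps}\bigl(\max(E(u),E(v))-V_\eps(v)\bigr)\qquad\forall\,T>0,
\end{equation*}
and minimizing over $T$ gives $\bigl(V_\eps(u)-V_\eps(v)\bigr)/d_2(u,v)\le\sqrt{2\bigl(\max(E(u),E(v))-V_\eps(v)\bigr)/\eps}$ with no $o(1)$ error; the limsup as $v\xrightarrow{E}u$ then closes exactly as you say. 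Second, the degenerate case $E(u)=V_\eps(u)$ (i.e., $u$ minimizes $E$ and the optimal $T$ blows up) deserves a separate line: take $T=c\,d_2(u,v)$, let $v\xrightarrow{E}u$ and then $c\to\infty$ to get $|\tilde\partial V_\eps|(u)=0$. Third, after invoking the DPP the competitor needs only to be defined on $[0,T]$; the clause ``sits at $v$ thereafter'' would trade the terminal cost $V_\eps(v)e^{-T/\eps}$ for the larger $E(v)e^{-T/\eps}$ and break the factorization, so that phrase should be deleted.
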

From Theorem \ref{th:betterV}, we deduce the following
\begin{corollary}
\label{cor:minV=minE}
Let ${u}\in D(E)$ be a minimum point of $V_\eps$. Then ${u}$ is also a minimum point of $E$, and 
$E(u) = V_\eps(u)$.  
\end{corollary}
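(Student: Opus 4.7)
The plan is to combine two simple observations: that the descending slope of $V_\eps$ must vanish at any minimum point, and that the Hamilton--Jacobi equation \eqref{eq:HJ-V} then forces $E(u)=V_\eps(u)$. From there, the global inequality $V_\eps\le E$ recorded in \eqref{eq:upperV} closes the argument.

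More concretely, first I would observe that if $u\in D(E)$ minimizes $V_\eps$ on $L^2(M,X)$, then for every competitor $v$ we have $V_\eps(u)-V_\eps(v)\le 0$, so $(V_\eps(u)-V_\eps(v))^+=0$. Inserting this into the definition \eqref{eq:slopeV} of the relaxed slope $|\tilde\partial V_\eps|$ (and noting that $v\xrightarrow{E}u$ in particular means $v\to u$ in $L^2$, so the pairs with $d_2(u,v)>0$ are the only ones contributing to the limsup) yields $|\tilde\partial V_\eps|(u)=0$. Since $u$ lies in $D(E)$, Theorem \ref{th:betterV} is applicable, and the Hamilton--Jacobi identity \eqref{eq:HJ-V} reads
\[
0=\tfrac{1}{2}|\tilde\partial V_\eps|^2(u)=\frac{E(u)-V_\eps(u)}{\eps},
\]
which gives immediately $E(u)=V_\eps(u)$.

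To conclude that $u$ minimizes $E$, I would argue by contradiction: suppose there exists $w\in D(E)$ with $E(w)<E(u)$. Then by \eqref{eq:upperV}, $V_\eps(w)\le E(w)<E(u)=V_\eps(u)$, contradicting the assumption that $u$ minimizes $V_\eps$. Hence $E(u)\le E(w)$ for all $w\in D(E)$, and, since $V_\eps(w)=+\infty$ outside $D(E)$ is dominated trivially, $u$ is a global minimum of $E$.

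I do not foresee any genuine obstacle here: the only nontrivial input is Theorem \ref{th:betterV}, which has already been established, and the argument is a textbook instance of the standard metric-gradient-flow principle that stationary points of a $\lambda$-convex functional agree with points where the descending slope vanishes. The only delicate point to verify when writing the full proof is that $u\in D(E)$, so that the Hamilton--Jacobi identity \eqref{eq:HJ-V} is indeed applicable at $u$; but this is part of the hypothesis.
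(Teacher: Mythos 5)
Your proposal is correct and takes essentially the same route as the paper's own proof: both observe that the relaxed slope $|\tilde\partial V_\eps|(u)$ vanishes at a minimizer of $V_\eps$, invoke the Hamilton--Jacobi identity \eqref{eq:HJ-V} from Theorem \ref{th:betterV} to deduce $E(u)=V_\eps(u)$, and then use the pointwise bound $V_\eps\le E$ of \eqref{eq:upperV} together with the minimality of $u$ for $V_\eps$ to conclude that $u$ minimizes $E$. The only stylistic difference is that the paper chains the inequalities directly ($V_\eps(u)\le V_\eps(w)\le E(w)$ for all $w$, hence $V_\eps(u)\le\min E$) while you phrase the last step as a short contradiction; these are logically equivalent.
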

\begin{proof}
On the one hand, thanks to \eqref{eq:upperV}, we have that 
\[
V_\eps({u}) \le \min E.
\]
On the other hand, thanks to Hamilton Jacobi equation \eqref{eq:HJ-V}, we have that 
\[
0 = \frac{1}{2}\abs{\tilde{\partial}V_\eps}^2({u})= \frac{E({u})-V_\eps({u})}{\eps},
\]
and thus 
\[
E({u}) = V_\eps({u}) \le \min E,
\]
namely, $E({u}) = \min E$. 
\end{proof}

Theorem \ref{th:betterV} shows that the Euler-Lagrange equation for a minimizer of $\mathcal I_\eps$ can be understood as a curve of maximal slope for the geodesically convex value function $V_\eps$ with respect to the slope $\abs{\tilde{\partial}V_\eps}$. 
Now we show that the unique minimizer $u_\eps$ actually verifies the Evolution Variational Inequality
\begin{equation}
\label{eq:evi_u_eps}
\frac{\d}{\d t}d^2_2(u_\eps(t), v) + V_\eps(u_\eps(t)) \le V_\eps(v)\qquad \textrm{ in }\mathscr{D}'(0,+\infty), \,\,\forall v\in D(E).
\end{equation}
We argue as follows. Since $V_\eps$ is geodesically convex and $d_2^2$ is $1$-geodesically convex, 
it turns out (see, e.g., \cite[Chapter 4]{ags2005}) that for any ${u}\in D(V_\eps)$ there exists a unique $v_\eps:[0,+\infty)\to L^2(M, X)$ that satifies the variational evolution inequality 
\begin{equation}
\label{eq:evi_v}
\frac{\d}{\d t}d^2_2(v_\eps(t), v) 
+ V_\eps(v_\eps(t))\le V_\eps(v), \quad  \textrm{ in }\mathscr{D}'(0,+\infty), \qquad \forall v\in D(V_\eps).
\end{equation}
In particular, we have that $v_\eps$ is a curve of maximal slope w.r.t. $\abs{\partial V_\eps}$ and therefore w.r.t. $\abs{\tilde{\partial} V_\eps}$, thanks to \eqref{eq:comparing_slope}, namely,
\[
-\frac{\d}{\d t}V_\eps(v_\eps(t))\ge \frac{1}{2}\abs{v_\eps'}^2(t) +\frac{1}{\eps}E(v_\eps(t))-\frac{1}{\eps}V_\eps(v_\eps(t)).
\]
Multiply both sides of the former inequality by $e^{-t/\eps}$, integrate over $(0,t)$ and use that $V_\eps$ is nonnegative. We get
\[
V_\eps({u}) \ge \int_{0}^t\frac{e^{-s/\eps}}{\eps}\left(\frac{\eps}{2}\abs{v_\eps'}^2(s) + E(v_\eps(s))\right)\d s + V_\eps(v_\eps(t))e^{-t/\eps}.
\]
Therefore, in the limit $t\to +\infty$, 
\[
\min_{w\in \mathfrak{V}_{{u}}}\mathcal{I}_\eps(w) = V_\eps({u})\ge \int_{0}^{\infty}\frac{e^{-t/\eps}}{\eps}\left(\frac{\eps}{2}\abs{v_\eps'}^2(t) + E(v_\eps(t))\right)\d t = \mathcal{I}_\eps(v_\eps),
\]
which implies that $v_\eps = u_\eps$, thanks to the uniqueness of the WED minimizer, hence the result. We collect these statements in the following
\begin{theorem}
\label{th:evi_V}
For ${u}\in H^1(M, X)$ and $\eps>0$, let $u_\eps=\argmin_{v\in \mathfrak{V}_{{u}}} \mathcal{I}_\eps(v)$. 
Then $u_\eps$ verifies the EVI for the value function $V_\eps$, namely,
\begin{equation}
\label{eq:evi_V}
\frac{1}{2}\frac{\d}{\d t}d^2_2(u_\eps(t),v) 
+ V_\eps(u_\eps(t))\le V_\eps(v), \qquad \forall t>0, \,\,\forall v\in D(V_\eps).
\end{equation}
\end{theorem}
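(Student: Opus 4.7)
The plan is to reduce the statement to the abstract theory of EVI gradient flows for geodesically convex functionals in metric spaces, and then use the Hamilton-Jacobi equation \eqref{eq:HJ-V} and uniqueness of the WED minimizer to identify the EVI solution with $u_\eps$. The strategy is essentially contained in the paragraph preceding the theorem statement, and I would formalize it in the steps below.

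First, I would recall that $V_\eps$ is geodesically convex by Proposition \ref{prop:convex_V} and that $d_2^2(\cdot, v)$ is $1$-geodesically convex on $L^2(M,X)$ by \eqref{eq:convex_d}. Since $(L^2(M,X), d_2)$ is a CAT($0$) (hence in particular a positively curved metric space in the sense of AGS), the abstract existence and uniqueness results of \cite[Chapter 4]{ags2005} apply. Therefore for the initial datum $u\in H^1(M,X)\subset D(V_\eps)$, there exists a unique curve $v_\eps:[0,+\infty)\to L^2(M,X)$ with $v_\eps(0)=u$ satisfying the EVI \eqref{eq:evi_v} for $V_\eps$.

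Second, I would use the fact that any EVI solution is automatically a curve of maximal slope for $V_\eps$ with respect to the local slope $\abs{\partial V_\eps}$ (this is again a standard consequence of the AGS theory for geodesically convex functionals). By \eqref{eq:comparing_slope}, $v_\eps$ is then also a curve of maximal slope with respect to the smaller slope $\abs{\tilde{\partial} V_\eps}$, so
\[
-\frac{\d}{\d t}V_\eps(v_\eps(t)) \ge \frac{1}{2}\abs{v_\eps'}^2(t) + \frac{1}{2}\abs{\tilde{\partial}V_\eps}^2(v_\eps(t)).
\]
Applying the Hamilton-Jacobi equation \eqref{eq:HJ-V} (which holds since $v_\eps(t)\in D(E)$ by the regularizing effect of geodesically convex EVI flows), this becomes
\[
-\frac{\d}{\d t}V_\eps(v_\eps(t)) \ge \frac{1}{2}\abs{v_\eps'}^2(t) + \frac{1}{\eps}\big(E(v_\eps(t))-V_\eps(v_\eps(t))\big).
\]

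Third, I would multiply the previous inequality by $e^{-t/\eps}$ and integrate over $(0,T)$. Using that $\frac{\d}{\d t}(V_\eps(v_\eps(t)) e^{-t/\eps}) = e^{-t/\eps}\frac{\d}{\d t}V_\eps(v_\eps(t)) - \frac{1}{\eps}V_\eps(v_\eps(t))e^{-t/\eps}$, the terms in $V_\eps(v_\eps(t))$ rearrange and we obtain, using $V_\eps \ge 0$,
\[
V_\eps(u) \ge \int_0^T \frac{e^{-t/\eps}}{\eps}\left(\frac{\eps}{2}\abs{v_\eps'}^2(t) + E(v_\eps(t))\right)\d t + V_\eps(v_\eps(T))e^{-T/\eps}.
\]
Letting $T\to +\infty$ and dropping the last nonnegative term yields $V_\eps(u)\ge \mathcal{I}_\eps(v_\eps)$. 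Since $V_\eps(u)=\min_{w\in\mathfrak{V}_u}\mathcal{I}_\eps(w)\le \mathcal{I}_\eps(v_\eps)$, we have equality, so $v_\eps\in \argmin_{\mathfrak{V}_u}\mathcal{I}_\eps$, and uniqueness of the WED minimizer (Theorem \ref{th:wp_minimum}) forces $v_\eps = u_\eps$. Hence $u_\eps$ satisfies \eqref{eq:evi_V}.

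The main obstacle I anticipate is the justification that the EVI solution $v_\eps$ automatically satisfies the sharp energy dissipation identity with respect to $\abs{\tilde{\partial}V_\eps}$ (rather than just $\abs{\partial V_\eps}$), together with verifying that $v_\eps(t)\in D(E)$ for a.e. $t$ so that the Hamilton-Jacobi equation \eqref{eq:HJ-V} can be invoked pointwise in time. Both facts are routine within the AGS framework once $V_\eps$ is recognized as a proper, lower semicontinuous, geodesically convex functional on a CAT($0$) space (the latter since $(L^2(M,X),d_2)$ inherits the NPC property from $X$), but they must be checked carefully to connect the abstract machinery with the specific slopes $\abs{\partial V_\eps}$ and $\abs{\tilde{\partial}V_\eps}$ introduced in \eqref{eq:slopeV}.
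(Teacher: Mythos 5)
Your proposal is correct and follows essentially the same route as the paper: existence/uniqueness of the EVI flow $v_\eps$ for the geodesically convex $V_\eps$ on the CAT(0) space $(L^2(M,X),d_2)$, then the maximal-slope inequality combined with $\abs{\tilde{\partial}V_\eps}\le\abs{\partial V_\eps}$ and the Hamilton--Jacobi equation \eqref{eq:HJ-V}, followed by multiplication by $e^{-t/\eps}$ and integration to deduce $V_\eps(u)\ge\mathcal{I}_\eps(v_\eps)$, forcing $v_\eps=u_\eps$ by Theorem \ref{th:wp_minimum}. The paper's proof is precisely this argument, carried out in the paragraph immediately preceding the theorem statement.
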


We have now all the ingredients to start the limiting process with respect to $\eps$ and conclude the proof of Theorem \ref{th:main_intro_0}.
First, recall that (see Thereom \ref{th:energy_est}) there exist $u\in \mathfrak{V}_{u_0}$ a non relabelled subsequence of $\eps$ along which 
\[
\begin{split}
&u_{\eps}(t) \longrightarrow u(t) \qquad \textrm{ in } L^2_{\textrm{loc}}(M, X) \qquad \forall t\in [0,+\infty),\\
&\liminf_{\eps\to 0}\int_{0}^\infty E(u_\eps(t))\d t \ge \int_{0}^\infty E(u(t))\d t,\\
&\abs{u_\eps'}\weak v \qquad \textrm{ in } L^2_{\textrm{loc}}(0,+\infty), \,\,\,\,v\ge \abs{u'} \,\,\, \textrm{for a.e. in } (0,+\infty).
\end{split}
\]
Moreover, thanks to Lemma \ref{lem:value_function_basic}, we have
\begin{equation}
\label{eq:liminfEV}
E(u(t))\le \liminf_{\eps\to 0}V_\eps(u_\eps(t)), \qquad \textrm{ for any } t>0.
\end{equation}

Then we test \eqref{eq:evi_V} with $\vf\in C^{\infty}_c(0,+\infty)$ such that $\vf\ge 0$ and we recall that $D(E)\subseteq D(V_\eps)$.
We get, for any $v\in D(E)$,
\[
-\frac{1}{2}\int_{0}^{\infty}d^2_2(u_\eps(t),v)\vf'(t)\d t 
+\int_{0}^{\infty}V_\eps(u_\eps(t))\vf(t)\d t \le \int_{0}^{\infty}V_\eps(v)\vf(t)\d t.
\]
Thus \eqref{eq:conv_below} and \eqref{eq:liminfEV} give 
\[
-\frac{1}{2}\int_{0}^{\infty}d^2_2(u(t),v)\vf'(t)\d t 
+\int_{0}^{\infty}E(u(t))\vf(t)\d t \le \int_{0}^{\infty}E(v)\vf(t)\d t, 
\]
that is the Evolution Variational Inequality
\[
\frac{\d}{\d t}\frac{1}{2}d^2_2(u(t), v)+ E(u(t))\le E(v)\qquad \textrm{ in } \mathscr{D'}(0,+\infty), \,\,\,\forall v\in D(E). 
\]
Note that the EVI formulation brings uniqueness of the flow and the contraction estimate
\[
d^2_2(u(t), v(t))\le d^2_2({u}_0, {v}_0) \qquad \forall t>0,
\]
where $u, v$ are solutions originating in ${u}_0$ and in ${v}_0$, respectively. 
Finally note also that since $u$ verifies the EVI above, the map $t\mapsto d^2_2(u(t), Q)$ is Lipschitz
for any $Q\in X$ (see, for instance, \cite[Theorem 4.0.4]{ags2005}).


 
\section{Monotonicity of parabolic frequency functions} 
\label{sec:mono}
In this section, we will assume that $(X,d)$ is CAT(0) and
present some crucial properties of minimizing maps into $(X,d)$, $u_\eps={\rm{argmin}} \big\{\mathcal{I}_\eps(v): \ v\in \mathfrak{V}_{u_0}\big\}$.
They are obtained by both the inner and outer variations of the WED functional $\mathcal{I}_\eps$ at $u_\eps$,
which ultimately leads to the monotonicity of the parabolic frequency functions $N(u; z_0, R, Q)$,
analogous to Almgren and Poon,  for the limiting map $u$ of $u_\eps$.

\subsection{$\mathcal{L}_\eps$-subharmonicity of $e_\eps(u_\eps)$} For any $Q\in X$, define $\rho_\varepsilon(x,t)
=d^2(u_\varepsilon (x,t), Q)$.
Then we have 
\begin{lemma} \label{sub-harmonicity} For any $\varepsilon>0$, $\rho_\varepsilon$ satisfies
\begin{equation}\label{subharmonic}
(\partial_t-\Delta)\rho_\varepsilon 
-\varepsilon \partial^2_t\rho_\varepsilon 
\le -2(\varepsilon |\partial_t u_\varepsilon|^2+|\nabla u_\varepsilon|^2)\ \ {\rm{in}}\  \ \mathscr{D}'(M\times (0,\infty)).
\end{equation}
\end{lemma}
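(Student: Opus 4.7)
The plan is to derive \eqref{subharmonic} by performing an outer variation of the minimizer $u_\varepsilon$ using the contraction $R_{\lambda,Q}$ from the introduction. I would fix a test function $\varphi\in C_c^\infty(M\times(0,\infty))$ with $\varphi\ge 0$ and, for $s>0$ small enough that $1-s\varphi\in[0,1]$, introduce the competitor
\[
v_s(x,t) := R_{1-s\varphi(x,t),\,Q}\bigl(u_\varepsilon(x,t)\bigr).
\]
Because $\varphi$ vanishes near $t=0$, we have $v_s(\cdot,0)=u_0$, and the $\lambda$-Lipschitzness of $R_{\lambda,Q}$ guarantees $v_s\in\mathfrak{V}_{u_0}$. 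The minimality of $u_\varepsilon$ (Theorem~\ref{th:wp_minimum}) then gives $\mathcal{I}_\varepsilon(v_s)\ge \mathcal{I}_\varepsilon(u_\varepsilon)$ for all admissible $s$.

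Next I would establish sharp CAT(0) bounds on the energy densities of $v_s$. Applying the NPC inequality \eqref{NPC1} with $P=v_s(z)$, $R=u_\varepsilon(z')$, $Q_\lambda=v_s(z')$ at nearby spacetime points $z,z'$, and using $d(v_s(z),Q)=(1-s\varphi(z))\,d(u_\varepsilon(z),Q)$, one obtains in the Korevaar--Schoen sense the first-order estimates
\[
|\nabla v_s|^2 \le (1-s\varphi)^2|\nabla u_\varepsilon|^2 - s\,\nabla\varphi\cdot\nabla\rho_\varepsilon + O(s^2),
\]
and likewise $|\partial_t v_s|^2 \le (1-s\varphi)^2|\partial_t u_\varepsilon|^2 - s\,\partial_t\varphi\,\partial_t\rho_\varepsilon + O(s^2)$, with remainders controlled by $|\nabla\varphi|^2\rho_\varepsilon$, $|\partial_t\varphi|^2\rho_\varepsilon$, and $\varphi^2 e_\varepsilon(u_\varepsilon)$. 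Inserting these into $\mathcal{I}_\varepsilon(v_s)-\mathcal{I}_\varepsilon(u_\varepsilon)\ge 0$, dividing by $s$, and letting $s\searrow 0$ produces
\[
2\iint\frac{e^{-t/\varepsilon}}{\varepsilon}\,\varphi\bigl(\varepsilon|\partial_t u_\varepsilon|^2+|\nabla u_\varepsilon|^2\bigr)\,dx\,dt \le -\iint\frac{e^{-t/\varepsilon}}{\varepsilon}\bigl(\varepsilon\,\partial_t\varphi\,\partial_t\rho_\varepsilon + \nabla\varphi\cdot\nabla\rho_\varepsilon\bigr)\,dx\,dt.
\]

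To extract the distributional statement free of the exponential weight, I would, given any non-negative $\psi\in C_c^\infty(M\times(0,\infty))$, substitute $\varphi := \varepsilon\,e^{t/\varepsilon}\psi$ in the inequality above (admissible since $\psi$ has compact support in $t$). The identity $\frac{e^{-t/\varepsilon}}{\varepsilon}\partial_t\varphi = \partial_t\psi + \psi/\varepsilon$ creates an additional $\psi\,\partial_t\rho_\varepsilon$ term, and integration by parts in $x$ and $t$ then yields
\[
\iint \rho_\varepsilon\bigl(\varepsilon\,\partial_t^2\psi + \partial_t\psi + \Delta\psi\bigr)\,dx\,dt \ge 2\iint\psi\bigl(\varepsilon|\partial_t u_\varepsilon|^2 + |\nabla u_\varepsilon|^2\bigr)\,dx\,dt,
\]
which is precisely the distributional formulation of \eqref{subharmonic}.

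The hard part will be establishing the sharp CAT(0) energy-density bound of the second paragraph. A naive triangle inequality gives $d(v_s(z),v_s(z'))\le |\ell(z)-\ell(z')|\,d(u_\varepsilon(z),Q) + \ell(z')\,d(u_\varepsilon(z),u_\varepsilon(z'))$ with $\ell=1-s\varphi$, which after squaring and passing to the energy density only produces an unsigned cross term of the form $|\nabla\varphi|\,d(u_\varepsilon,Q)\,|\nabla u_\varepsilon|$, too crude to yield a useful first-order variation. The correct bound requires invoking the full NPC inequality \eqref{NPC1}, equivalently the $2$-convexity of $d^2(\cdot,\cdot)$ along geodesics, so that the cross term appears as the directional derivative $\nabla\varphi\cdot\nabla\rho_\varepsilon$. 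This is the same mechanism underlying the classical Korevaar--Schoen chain-rule estimate for compositions with contractions; the formal computation with $Q=0$ and $X=\mathbb{R}^L$ gives equality to first order, and CAT(0) promotes it to the one-sided inequality we need.
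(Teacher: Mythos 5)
Your proof takes exactly the same route as the paper: an outer variation $u_\eps^\tau=R_{1-\tau\phi,Q}(u_\eps)$ together with minimality, the Gromov–Schoen expansion of the energy density of the composition (which is precisely the sharp CAT($0$) bound you correctly identify as the crux), and then a change of test function $\phi=e^{t/\eps}\psi$ (in your normalization $\varphi=\eps e^{t/\eps}\psi$, equivalent up to the constant factor $\tfrac{1}{\eps}$ coming from the WED weight) followed by integration by parts to remove the exponential weight. The only cosmetic difference is the order of the last two steps — the paper integrates by parts before substituting $\phi=e^{t/\eps}\psi$, you substitute first — and this changes nothing; the proposal is correct.
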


\smallskip
\begin{proof} Fixed $Q\in X$, let $R_{\lambda,Q}: [0,1]\times X\to X$ be the contraction map
defined in section 1, see also Gromov and Schoen \cite{GS}.

For any nonnegative $\phi\in C_0^\infty(M\times (0,\infty)$ and $\tau\ge 0$, we define a family of outer variations of $u_\varepsilon$ by letting
\[
u_\varepsilon^\tau(x,t)=R_{1-\tau\phi(x,t), Q}(u_\varepsilon(x,t)), \ \forall (x,t)\in M\times [0,\infty).
\]
Note that $u_\eps^\tau\in \mathfrak{V}_{u_0}$, and since $u_\eps^0=u_\eps$ minimizes $\mathcal{I}_\eps$, we must have
$$\frac{d}{d\tau}\big|_{\tau=0} \mathcal{I}_\eps(u_\eps^\tau)\ge 0.$$
As in \cite{GS},  direct calculations imply
\begin{align*}
\Big|\frac{\partial u_\varepsilon^\tau}{\partial x_i}(x,t)\Big|^2
&=\Big|D_{\frac{\partial u_\varepsilon}{\partial x_i}} R_{1-\tau\phi(x,t),Q}(u_\varepsilon(x,t))\Big|^2
-\tau \frac{\partial\phi}{\partial x_i}
\frac{\partial d^2(R_{1-\tau\phi(x,t),Q}(u_\varepsilon(x,t), Q)}{\partial x_i}\\
&+\tau^2\big(\frac{\partial\phi}{\partial x_i}\big)^2
\Big|\frac{\partial R_{1-\tau\phi(x,t),Q}}{\partial\lambda}(u_\varepsilon(x,t))\Big|^2, \ 1\le i\le n, 
\end{align*}
and
\begin{align*}
\Big|\frac{\partial u_\varepsilon^\tau}{\partial t}(x,t)\Big|^2
&=\Big|D_{\frac{\partial u_\varepsilon}{\partial t}} R_{1-\tau\phi(x,t),Q}(u_\varepsilon(x,t))\Big|^2
-\tau \frac{\partial\phi}{\partial t}
\frac{\partial d^2(R_{1-\tau\phi(x,t),Q}(u_\varepsilon(x,t), Q)}{\partial t}\\
&+\tau^2\big(\frac{\partial\phi}{\partial t}\big)^2
\Big|\frac{\partial R_{1-\tau\phi(x,t),Q}}{\partial\lambda}(u_\varepsilon(x,t))\Big|^2.
\end{align*}    
Hence
\begin{align*}
\mathcal{I}_\varepsilon(u_\varepsilon^\tau)&=\mathcal{I}_\varepsilon(u_\varepsilon)-2\tau \iint e^{-\frac{t}{\varepsilon}}
\big(\varepsilon|\partial_t u_\varepsilon|^2+|\nabla u_\varepsilon|^2)\phi\,dxdt\\
&-\tau\iint e^{-\frac{t}{\varepsilon}}\big(\varepsilon\partial_t\phi\cdot\partial_t d^2(R_{1-\tau\phi, Q}(u_\varepsilon),Q)
+\nabla\phi\cdot\nabla d^2(R_{1-\tau\phi, Q}(u_\varepsilon),Q)\big)\,dxdt+O(\tau^2).
\end{align*}
This, together with $\frac{d}{d\tau}\big|_{\tau=0} \mathcal{I}_\eps(u_\eps^\tau)\ge 0$,  implies that 
\begin{align*}   
0\le -2\iint e^{-\frac{t}{\varepsilon}}
\big(\varepsilon|\partial_t u_\varepsilon|^2+|\nabla u_\varepsilon|^2)\phi\,dxdt
-\iint e^{-\frac{t}{\varepsilon}}\big(\varepsilon\partial_t\phi\cdot\partial_t d^2(u_\varepsilon,Q)
+\nabla\phi\cdot\nabla d^2(u_\varepsilon,Q)\big)\,dxdt.
\end{align*}
By integration by parts, we obtain that
\begin{align*}   
2\iint e^{-\frac{t}{\varepsilon}}
\big(\varepsilon|\partial_t u_\varepsilon|^2+|\nabla u_\varepsilon|^2)\phi\,dxdt
\le\iint e^{-\frac{t}{\varepsilon}}\Big[-\partial_t\phi +\varepsilon\partial_t^2\phi+\Delta\phi\Big] d^2(u_\varepsilon,Q)\,dxdt.
\end{align*}
If we choose $\phi=e^{\frac{t}{\eps}}\psi$, then $\psi\in C_0^\infty(\R^n\times (0,\infty))\ge 0$, and 
\begin{equation}\label{subharmonic1}
2\iint (\varepsilon |\partial_t u_\varepsilon|^2
+|\nabla u_\varepsilon|^2)\psi\,dxdt
\le \iint d^2(u_\varepsilon,Q)\big[(\partial_t\psi+\Delta\psi) 
+\varepsilon \partial^2_t\psi\big]\,dxdt.
\end{equation}
This completes the proof.
\end{proof}

\smallskip
\subsection{Stationarity identity of $u_\eps$}
{Since the result is of local nature, for simplicity we will assume that $M=\R^n$ with the Euclidean metric. }
For any vector-field $Y\in C^\infty(\mathbb R^n\times\mathbb R_+,\mathbb{R}^{n+1})$, with $Y=0$ for $t$ near $0$,  
we define $u_\eps^\tau(x,t)=u_\eps((x,t)+\tau Y(x,t))$ for sufficiently small $|\tau|>0$.
Then, by the minimality of $u_\eps$, direct calculations imply 
\begin{align}\label{stationarity0}
0&=\frac{d}{d\tau}\big|_{\tau=0}\frac12\iint_{\mathbb R^{n+1}_+} e^{-\frac{t}{\eps}}(\eps |\partial_t u_\eps^\tau|^2+|\nabla u_\eps^\tau|^2)\,\d x\d t\\
&=\iint_{\mathbb R^{n+1}_+} e^{-\frac{t}{\eps}}\Big\{ \eps \partial_tu_\eps\big(\partial_{x_j} u_\eps \partial_tY^j
+\partial_tu_\eps \partial_t Y^{n+1}\big) +\partial_{x_i} u_\eps\big(\partial_{x_j} u_\eps \partial_{x_i} Y^j
+\partial_tu_\eps \partial_{x_i}Y^{n+1}\big)\Big\}\,\d x\d t\nonumber\\
&\ \ \ +\frac12\iint_{\mathbb R^{n+1}_+} e^{-\frac{t}{\eps}}\big(\eps |\partial_tu_\eps|^2+|\nabla u_\eps|^2\big)\big(\frac{1}{\eps}Y^{n+1}-\textrm{div}_{(x,t)} Y\big)\,\d x\d t.\nonumber
\end{align}

\medskip
\subsection{Energy bounds of $u_\eps$}
\label{ssec:energy_bounds}
If we choose $Y=(0', Z(t))$, with $Z\in C^\infty(\mathbb{R}_+,\mathbb R)$ and $Z=0$ for $t$ near $0$,  then \eqref{stationarity0}
reduces to
\begin{align*}
0=2\eps\iint_{\mathbb R^{n+1}_+} e^{-\frac{t}{\eps}}|\partial_tu_\eps|^2 Z'(t) \,\d x\d t
+\iint_{\mathbb R^{n+1}_+} e^{-\frac{t}{\eps}}\big(\eps |\partial_tu_\eps|^2+|\nabla u_\eps|^2\big)(\frac{1}{\eps}Z-Z'(t))\,\d x\d t.
\end{align*}
For $\eta\in C_0^\infty(0,\infty)$, define $Z(t)=\int_0^t \eta(s)\,ds,  \ t\ge 0 $ (see also \cite{serra-tilli}).  
Then $Z\in C^\infty(\mathbb R_+)$ and $Z=0$ for $t$ near $0$. Substituting $Z$ into the above identity,
we obtain that
\begin{align*}
0=2\eps\iint_{\mathbb R^{n+1}_+} e^{-\frac{t}{\eps}}|\partial_t u_\eps|^2 \eta(t) \,\d x\d t
+\iint_{\mathbb R^{n+1}_+} e^{-\frac{t}{\eps}}\big(\eps |\partial_t u_\eps|^2+|\nabla u_\eps|^2\big)(\frac{Z}{\eps}-\eta(t))\,\d x\d t.
\end{align*}

For $0\le t<\infty$, define 
$$I_\eps(t)=\int_{\mathbb R^n} |\partial_tu_\eps|^2\,\d x,$$
$$L_\eps(t)=\int_{\mathbb R^n} \big(\eps |\partial_tu_\eps|^2+|\nabla u_\eps|^2\big)(x,t)\,\d x,$$
and
$$H_\eps(t)=\int_t^\infty e^{-\frac{s}{\eps}}\int_{\mathbb R^n} \big(\eps |\partial_tu_\eps|^2+|\nabla u_\eps|^2\big)(x,s)\,\d x\d s
=\int_t^\infty e^{-\frac{s}{\eps}} L_\eps(s)\,\d s.$$
Then
\begin{align*}
\iint_{\mathbb R^{n+1}_+} e^{-\frac{t}{\eps}}\big(\eps |\partial_t u_\eps|^2+|\nabla u_\eps|^2\big)\frac{1}{\eps}Z(t)\,\d x\d t
&=-\frac{1}{\eps}\int_0^\infty H_\eps'(t) Z(t)\,\d t\\
&=-\frac{1}{\eps}\Big(H_\eps(t)Z(t)\Big|_{t=0}^\infty-\int_0^\infty \eta(t)H_\eps(t)\,\d t\Big)\\
&=\frac{1}{\eps}\int_0^\infty \eta(t)H_\eps(t)\,\d t,
\end{align*}
where we have used the fact that $H_\eps(\infty)=Z(0)=0$. Substituting this into the above identity, we obtain that
$$
\int_0^\infty(2\eps I_\eps(t)+\frac{1}{\eps}e^{\frac{t}{\eps}}H_\eps(t)-L_\eps(t)) \eta(t)\,\d t=0.
$$
Since $\eta\in C_0^\infty(0,\infty)$ is arbitrary, this yields
\begin{equation}\label{energyineq0}
2\eps I_\eps(t)+\frac{1}{\eps}e^{\frac{t}{\eps}}H_\eps(t)-L_\eps(t)=0.
\end{equation}
Note that
$$
\big(e^{\frac{t}{\eps}}H_\eps(t)\big)'=\frac{1}{\eps}e^{\frac{t}{\eps}}H_\eps(t)-L_\eps(t),
$$
we arrive at
$$
2I_\eps(t)+\frac{1}{\eps}\big(e^{\frac{t}{\eps}}H_\eps(t)\big)'=0.
$$
Integrating it over $0\le t\le T$, $0<T<\infty$, yields
$$
2\int_0^TI_\eps(t)\,dt+\frac{1}{\eps}e^{\frac{T}{\eps}}H_\eps(T)=\frac{1}{\eps}H_\eps(0).
$$

By the minimality of $u$, we know that
\begin{align*}
\frac{1}{\eps}H_\eps(0)&=\frac{1}{\eps}\int_0^\infty e^{-\frac{t}{\eps}}\int_{\mathbb R^n} \big(\eps |\partial_t u_\eps|^2+|\nabla u_\eps|^2\big)(x,t)\,\d x\d t\\
&\le \frac{1}{\eps}\int_0^\infty e^{-\frac{t}{\eps}}\int_{\mathbb R^n} \big(\eps |\partial_tu_0|^2+|\nabla u_0|^2\big)(x,t)\,\d x\d t\\
&=\big(\frac{1}{\eps} \int_0^\infty e^{-\frac{t}{\eps}}\,\d t\big)\int_{\mathbb R^n}|\nabla u_0|^2\,\d x
=2E(u_0).
\end{align*} 
Therefore we have 
\begin{equation}\label{energyineq}
2\int_0^TI_\eps(t)\,\d t+\frac{1}{\eps}e^{\frac{T}{\eps}}H_\eps(T)\le 2E(u_0).
\end{equation}
It follows from \eqref{energyineq} that
\begin{equation}\label{energyineq1}
\int_0^T\int_{\mathbb R^n}|\partial_t u_\eps|^2\,\d x\d t\le E(u_0),  \ \forall T>0,
\end{equation}
and 
\begin{equation}\label{energyineq2}
\frac{1}{\eps}e^{\frac{t}{\eps}}H_\eps(t)\le 2E(u_0), \ \forall t>0.
\end{equation}
On the other hand, integrating \eqref{energyineq0} over $0<s<t$ implies
\begin{align}
\int_s^t\int_{\mathbb R^n}|\nabla u_\eps|^2
&\le \int_s^t L_\eps(\tau)\,d\tau=\int_s^t (2\eps I_\eps(\tau)+\frac{1}{\eps}e^{\frac{\tau}{\eps}}H_\eps(\tau))\,\d \tau\nonumber\\
&\le 2\eps \int_s^t\int_{\mathbb R^n}|\partial_t u_\eps|^2\,\d x\d\tau+(t-s)\sup_{\tau>0} \frac{1}{\eps}e^{\frac{\tau}{\eps}}H_\eps(\tau)\nonumber\\
&\le 2(t-s+\eps)E(u_0).
\end{align}

For any $0\le t_1<t_2<\infty$, if we replace $Z(t)=e^{\frac{t}{\eps}}Z_\delta(t)\in C^\infty_0(t_1,t_2)$,
with $Z_\delta(t)\xrightarrow{\delta\to 0}\chi_{[t_1,t_2]}(t)$, in \eqref{stationarity0}, then
\begin{align*}
&\int_{\mathbb R^n} (|\nabla u_\eps|^2-\eps |\partial_t u_\eps|^2)\,dx\big|_{t=t_1}- \int_{\mathbb R^n} (|\nabla u_\eps|^2-\eps |\partial_t u_\eps|^2)\,\d x\big|_{t=t_2}\\
&=\int_{t_1}^{t_2}\int_{\mathbb R^n} \big(|\partial_t u_\eps|^2+\frac{1}{\eps}|\nabla u_\eps|^2\big)\,\d x\d t
+\int_{t_1}^{t_2}\int_{\mathbb R^n}\frac{1}{\eps}\big(\eps|\partial_t u_\eps|^2-|\nabla u_\eps|^2\big)\,\d x \d t\\
&=\int_{t_1}^{t_2}\int_{\mathbb R^n} 2|\partial_t u_\eps|^2\,\d x\d t.
\end{align*}
If we choose $t_1=0$ and $t_2=t>0$, then the above inequality implies
\begin{equation}
2\int_{0}^{t}\int_{\mathbb R^n} |\partial_tu_\eps|^2\,\d x\d t+\int_{\mathbb R^n} (|\nabla u_\eps|^2-\eps |\partial_tu_\eps|^2)(x,t)\,\,\d x
\le 2E(u_0).
\end{equation} 

\medskip
\subsection{Energy monotonicity of $u_\eps$} To derive it, we first modify  \eqref{stationarity0} into a slightly different form. 
Replacing $Y=e^{\frac{t}{\eps}}Z$, with $Z\in C_0^\infty(\R^n\times (0,\infty),\R^{n+1})$, into  \eqref{stationarity0}, we obtain
\begin{align}\label{stationarity00}
0&=\iint_{\mathbb R^{n+1}_+} 
\Big\{ \varepsilon \big(\partial_tu_\eps\cdot \partial_{x_j} u_\eps \partial_tZ^j+|\partial_tu_\eps|^2 \partial_t Z^{n+1}\big) 
+\big(\partial_t u_\eps\cdot \partial_{x_j}u_\eps Z^j+|\partial_tu_\eps|^2 Z^{n+1}\big)\nonumber\\
&\qquad\qquad\qquad +\big(\partial_{x_i} u_\eps\cdot \partial_{x_j} u_\eps\partial_{x_i}Z^j+\partial_t u_\eps \cdot \partial_{x_i} u_\eps \partial_{x_i} Z^{n+1}\big)\Big\}\,dxdt\nonumber\\
&\ \ \ -\frac12\iint_{\mathbb R^{n+1}_+} 
\big(\eps |\partial_tu_\eps|^2+|\nabla u_\eps|^2\big)\textrm{div}_{(x,t)} Z\,dxdt
\end{align}
holds for all $Z\in C_0^\infty(\R^n\times (0,\infty),\R^{n+1})$.

If we choose $Z=(Z^1,\cdots, Z^n, 0)$, then 
\eqref{stationarity00} yields
\begin{align}\label{stationarity01}
0&=\iint_{\mathbb R^{n+1}_+} 
\Big\{ \varepsilon \partial_tu_\eps\cdot \partial_{x_j}u_\eps \partial_tZ^j
+\partial_tu_\eps\cdot \partial_{x_j} u_\eps Z^j
+\partial_{x_i} u_\eps\cdot \partial_{x_j} u_\eps \partial_{x_i}Z^j\Big\}\,dxdt\nonumber\\
&\ \ \ -\frac12\iint_{\mathbb R^{n+1}_+} 
\big(\eps |\partial_tu_\eps|^2+|\nabla u_\eps|^2\big) \partial_{x_j}Z^j\,dxdt.
\end{align}
And, if we choose $Z=(0,\cdots, 0, Z^{n+1})$, then
\eqref{stationarity00} yields
\begin{align}\label{stationarity02}
0&=\iint_{\mathbb R^{n+1}_+} 
\Big\{ \varepsilon |\partial_tu_\eps|^2 \partial_tZ^{n+1}
+|\partial_tu_\eps|^2 Z^{n+1}
+\partial_tu_\eps\cdot \partial_{x_i} u_\eps \partial_{x_j} Z^{n+1}\Big\}\,dxdt\nonumber\\
&\ \ \ -\frac12\iint_{\mathbb R^{n+1}_+} 
\big(\eps |\partial_tu_\eps|^2+|\nabla u_\eps|^2\big) \partial_tZ^{n+1}\,dxdt.
\end{align}

Assume $z_0=(0,0)$. Let $G=G_{(0,0)}$ be the backward heat kernel centered at $(0,0)$. For $t_2<t_1<0$, let $\theta^\delta
\in C_0^\infty([t_2, t_1])$ be such that
$\theta^\delta\to \chi_{[t_2,t_1]}$ as $\delta\to 0$. Similar to Feldman \cite{Feldman}, 
by substituting $Z^{n+1}=tG\theta^\delta$ into \eqref{stationarity02} and applying
\[
\partial_t(tG) =\big(-\frac{n-2}2 -\frac{|x|^2}{4t}\big)G,
\ \ \nabla G=\frac{x}{2t} G, 
\]
we obtain, after sending
$\delta\to 0$, that
\begin{align}\label{stationarity03}
&\frac12 \iint_{\mathbb R^{n}\times [t_2, t_1]} 
\big(\eps |\partial_tu_\eps|^2+|\nabla u_\eps|^2\big) \big(-\frac{n-2}2 -\frac{|x|^2}{4t}\big)G\nonumber\\
&\qquad+
\frac12\Big(\int_{t=t_2}-\int_{t=t_1}\Big)\big(\eps |\partial_tu_\eps|^2+|\nabla u_\eps|^2\big)tG\nonumber\\
&=\iint_{\R^n\times [t_2, t_1]}
\varepsilon |\partial_t u_\eps|^2 \big(-\frac{n-2}2 -\frac{|x|^2}{4t}\big)G+\frac12 \partial_tu_\eps\cdot(x\cdot\nabla u_\eps+2t\partial_t u_\eps)G
\nonumber\\
&\qquad+\Big(\int_{t=t_2}-\int_{t=t_1}\Big)\varepsilon |\partial_t u_\eps|^2 tG.
\end{align}

\smallskip
Next we substitute $Z(x,t)=(xG(x,t)\theta^\delta(x,t), 0)$
into \eqref{stationarity01} and applying
\[
\partial_{x_j}(x^iG)=\big(\delta_{ij}+\frac{x_ix_j}{2t}\big)G,
\ div(xG)=\big(n+\frac{|x|^2}{2t}\big)G,
\]
we obtain, after sending $\delta\to 0$, that
\begin{align} \label{stationarity04}   
&\frac12\iint_{\mathbb R^n\times [t_2, t_1]} 
\big(\eps |\partial_tu_\eps|^2+|\nabla u_\eps|^2\big) \big(n+\frac{|x|^2}{2t}\big)G\,dxdt\nonumber\\
&=\iint_{\R^n\times [t_2,t_1]}\eps \partial_tu_\eps\cdot (x\cdot\nabla u_\eps) G_t
+\Big(\int_{t=t_2}-\int_{t=t_1}\Big) \eps \partial_tu_\eps\cdot (x\cdot\nabla u_\eps) G\nonumber\\
&+\iint_{\R^n\times [t_2,t_1]}
\Big(u_t\cdot(x\cdot\nabla u_\eps)G +|\nabla u_\eps|^2 G+\frac{|x\cdot\nabla u_\eps|^2}{2t} G\Big).
\end{align}

Now, by multiplying \eqref{stationarity04} by $\frac12$ and
adding it to \eqref{stationarity03}, we obtain
\begin{align}\label{stationarity05}
&\frac12 \iint_{\mathbb R^{n}\times [t_2, t_1]} 
\big(\eps |\partial_tu_\eps|^2+|\nabla u_\eps|^2\big) G+
\frac12\Big(\int_{t=t_2}-\int_{t=t_1}\Big)\big(\eps |\partial_tu_\eps|^2+|\nabla u_\eps|^2\big)tG\nonumber\\
&=\frac12\iint_{\R^n\times [t_2,t_1]} |\nabla u_\eps|^2 G
-\iint_{\R^n\times [t_2, t_1]}\frac{1}{4|t|}
\big|x\cdot\nabla u_\eps+2t \partial_t u_\eps\big|^2 G\nonumber\\
&+\frac12 \iint_{\R^n\times [t_2, t_1]}
\eps u_t\cdot (x\cdot\nabla u_\eps) G_t +
\frac12\Big(\int_{t=t_2}-\int_{t=t_1}\Big) \varepsilon \partial_t u_\eps \cdot(x\cdot\nabla u_\eps+2t \partial_t u_\eps) G.
\end{align}
In particular, by choosing $t_1=-R_1^2$ and $t_2=-R_2^2$
 with $0<R_1<R_2$, we obtain from \eqref{stationarity05} that 
\begin{align}\label{stationarity06}
&\frac12 R_2^2\int_{t=-R_2^2}\big(\eps |\partial_t u_\eps|^2+|\nabla u_\eps|^2\big)G-\frac12 R_1^2\int_{t=-R_1^2}\big(\eps |\partial_t u_\eps|^2+|\nabla u_\eps|^2\big)G\nonumber\\
&=\frac12\iint_{\R^n\times [-R_2^2,-R_1^2]} \eps|\partial_tu_\eps|^2 G
+\quad\iint_{\R^n\times [-R_2^2, -R_1^2]}\frac{1}{4|t|}
\big|x\cdot\nabla u_\eps+2t \partial_tu_\eps\big|^2 G\nonumber\\
&\quad-\frac12 \iint_{\R^n\times [-R_2^2, -R_1^2]}
\eps \partial_tu_\eps\cdot (x\cdot\nabla u_\eps) G_t \nonumber\\
&\quad-\frac12\Big(\int_{t=-R_2^2}-\int_{t=-R_1^2}\Big) \varepsilon \partial_tu_\eps \cdot(x\cdot\nabla u_\eps
+2t \partial_t u_\eps) G.
\end{align}

\subsection{Monotonicity of frequency functions for $u$} It follows from the energy bounds from section 4.3 that we can assume that
after passing to a possible subsequence, that
there exists $u\in {H}^1_{\rm{loc}}(\R^n\times (0,\infty), X)$ such that for any $Q\in X$, 
\[
d(u_\varepsilon, Q)\to d(u, Q)\ \ {\rm{in}} \ \ L^2_{\rm{loc}}(\R^n\times (0,\infty)),
\]
and
\[
(\partial_t u_\varepsilon, \nabla u_\varepsilon)
\rightharpoonup (\partial_t u, \nabla u) 
\ \ {\rm{in}}\ \ L^2(\R^n\times (0,\infty)),
\]
and there exists a nonnegative Radon measure
$\nu$ on $\R^n\times (0,\infty)$ such that
\[
e_\eps(u_\eps)\,dxdt
\rightharpoonup \mu=\mu_t\d t:=(|\nabla u|^2\,dx+\nu_t)\d t
\]
as weak convergence of Radon measures, where $\nu=\nu_t \,dt$, with $\{\nu_t\}_{t>0}$ a family of nonnegative Radon measures on $\R^n$.  

\begin{lemma}\label{sub_caloric0} For any $Q\in X$, the following
\begin{equation}\label{sub_caloric}
\partial_t d^2(u, Q)-\Delta d^2(u, Q)
\le -2 \mu_t\,dt
\end{equation}
holds in $\mathscr{D}'(\R^n\times (0,\infty))$.
\end{lemma}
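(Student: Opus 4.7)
The plan is to establish \eqref{sub_caloric} by passing to the limit $\eps\to 0$ in the perturbed subcaloric inequality of Lemma \ref{sub-harmonicity} for $\rho_\eps=d^2(u_\eps,Q)$. The three key ingredients are already in place, so no new energy identity is needed: the $\eps$-subharmonicity \eqref{subharmonic} together with its integrated form \eqref{subharmonic1}; the weak convergence $e_\eps(u_\eps)\,\d x\d t\rightharpoonup\mu_t\d t$ of Radon measures; and the $L^2_{\mathrm{loc}}$ convergence $d(u_\eps,Q)\to d(u,Q)$ stated immediately before the lemma.

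Fix a nonnegative $\psi\in C_0^\infty(\R^n\times(0,\infty))$. Starting from \eqref{subharmonic1}, namely
\[
2\iint e_\eps(u_\eps)\,\psi\,\d x\d t\le \iint d^2(u_\eps,Q)\bigl[\partial_t\psi+\Delta\psi+\eps\,\partial_t^2\psi\bigr]\d x\d t,
\]
I would treat the two sides separately. The left-hand side passes to the limit directly by the weak convergence of $e_\eps(u_\eps)\,\d x\d t$ against the continuous compactly supported $\psi$, yielding $2\iint\psi\,\d\mu$. For the main part of the right-hand side, the identity
\[
d^2(u_\eps,Q)-d^2(u,Q)=\bigl(d(u_\eps,Q)-d(u,Q)\bigr)\bigl(d(u_\eps,Q)+d(u,Q)\bigr),
\]
combined with Cauchy-Schwarz on any spacetime compact set $K$, upgrades the $L^2_{\mathrm{loc}}$ convergence of the distances to $L^1_{\mathrm{loc}}$ convergence of their squares, provided one has the uniform bound $\|d(u_\eps,Q)\|_{L^2(K)}\le C(K)$. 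The leftover perturbative term then satisfies
\[
\Bigl|\eps\iint d^2(u_\eps,Q)\,\partial_t^2\psi\,\d x\d t\Bigr|\le \eps\,\|\partial_t^2\psi\|_\infty\,\|d^2(u_\eps,Q)\|_{L^1(\spt\psi)}\xrightarrow{\eps\to 0}0.
\]
Assembling the three limits yields $2\iint\psi\,\d\mu\le \iint d^2(u,Q)(\partial_t\psi+\Delta\psi)\,\d x\d t$, which is precisely \eqref{sub_caloric} interpreted distributionally.

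The only non-mechanical step is securing the uniform local $L^2$-bound on $d(u_\eps,Q)$. The natural route is the triangle inequality
\[
d(u_\eps(x,t),Q)\le d(u_0(x),Q)+\int_0^t|\partial_t u_\eps|(x,s)\,\d s,
\]
coupled with the dissipation bound \eqref{eq:bound_dissipation}; this controls $\int_K d^2(u_\eps,Q)\,\d x\d t$ as soon as $d(u_0,Q)$ is locally square-integrable, which is automatic in the setting of Theorem \ref{Lip_reg} where $d(u_0,Q)\in L^\infty(\R^n)$. The main obstacle I foresee is this uniformity, together with the reconciliation between the pointwise-in-time $L^2(M,X)$ convergence delivered by Theorem \ref{th:energy_est} and the spacetime $L^2_{\mathrm{loc}}$ convergence used here, which can be arranged by a Fubini argument and dominated convergence powered by the $L^\infty_t L^2_{x,\mathrm{loc}}$ bound above. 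Once this soft analytic point is settled, the passage to the limit is linear and the $\eps$-perturbation drops out harmlessly.
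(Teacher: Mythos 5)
Your proposal is correct and follows exactly the route the paper takes: the paper's entire proof is the sentence ``\eqref{sub_caloric} follows directly from \eqref{subharmonic1} of Lemma \ref{sub-harmonicity}, by sending $\eps\to 0$,'' and you have simply spelled out the three convergences that make that one-liner rigorous. The only remark worth making is that you need not separately secure the uniform $L^2_{\mathrm{loc}}$ bound on $d(u_\eps,Q)$: the $L^2_{\mathrm{loc}}$ convergence $d(u_\eps,Q)\to d(u,Q)$ is already stated in the paper at the start of \S4.4, and convergent sequences are bounded, so the bound comes for free and the triangle-inequality route you sketch is unnecessary.
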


\begin{proof} \eqref{sub_caloric} follows directly from 
 \eqref{subharmonic1} of Lemma \ref{sub-harmonicity}, by sending $\eps\to 0$.
\end{proof}

\medskip
\noindent For $z_0=(x_0, t_0)\in\R^n\times (0,\infty)$, let $G_{z_0}$ be the backward heat kernel on $\R^n$ with center $z_0$, given by
\[
G_{z_0}(x,t)=(4\pi |t_0-t|)^{-\frac{n}2} \exp\big(-\frac{|x-x_0|^2}{4(t_0-t)}\big), \ \ \ x\in\R^n, \ t<t_0.
\]
Then we have
\begin{lemma}\label{struwe_mono} For any $z_0=(x_0,t_0)\in \R^n\times (0,\infty)$ and $0<R_1<R_2<\sqrt{t_0}$, it holds
\begin{align}\label{struwe_mono0}
2 R_2^2\int_{t=t_0-R_2^2}\mu_t G_{z_0}-2 R_1^2\int_{t=t_0-R_1^2}\mu_t G_{z_0}
\ge\iint_{\R^n\times [t_0-R_2^2, t_0-R_1^2]}\frac{1}{|t|}
\big|x\cdot\nabla u+2t \partial_tu\big|^2 G_{z_0}.
\end{align}
\end{lemma}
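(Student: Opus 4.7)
The plan is to derive the limit inequality directly from the $\varepsilon$-level identity in (stationarity06), which is the stationarity identity for $u_\varepsilon$ tested against the vector field $Y=e^{t/\varepsilon}(xG,tG)$ (appropriately truncated). First I will rewrite (stationarity06) for a kernel centered at an arbitrary $z_0=(x_0,t_0)$ rather than at the origin, which is immediate by translation. Multiplying the resulting identity by $4$, the left hand side becomes exactly
$$
2R_2^2\!\!\int_{t=t_0-R_2^2}\!\!\!e_\varepsilon(u_\varepsilon)G_{z_0}\,\d x-2R_1^2\!\!\int_{t=t_0-R_1^2}\!\!\!e_\varepsilon(u_\varepsilon)G_{z_0}\,\d x,
$$
while the right hand side is the sum of a nonnegative $\iint_{\R^n\times[t_0-R_2^2,t_0-R_1^2]}2\varepsilon|\partial_tu_\varepsilon|^2G_{z_0}$ term, the nonnegative quadratic $\iint\frac{1}{|t-t_0|}|(x-x_0)\cdot\nabla u_\varepsilon+2(t-t_0)\partial_tu_\varepsilon|^2 G_{z_0}$, and two error terms $\mathcal{E}_\varepsilon^{(1)},\mathcal{E}_\varepsilon^{(2)}$ linear in $\varepsilon\partial_tu_\varepsilon$.

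Next I will pass to the limit $\varepsilon\to 0$. The bound $\iint|\partial_tu_\varepsilon|^2\le E(u_0)/2$ obtained in the energy estimates of Subsection \ref{ssec:energy_bounds} gives immediately $\iint\varepsilon|\partial_tu_\varepsilon|^2G_{z_0}\to 0$ and, via Cauchy--Schwarz together with the uniform bound $\int|\nabla u_\varepsilon|^2\le E(u_0)$ and the smoothness and boundedness of $G_{z_0}$, $\partial_tG_{z_0}$, $(x-x_0)G_{z_0}$ on the compact spacetime slab $\R^n\times[t_0-R_2^2,t_0-R_1^2]$, one obtains $|\mathcal{E}_\varepsilon^{(i)}|=O(\sqrt{\varepsilon})\to 0$. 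For the quadratic term, weak convergence $(\nabla u_\varepsilon,\partial_tu_\varepsilon)\rightharpoonup(\nabla u,\partial_tu)$ in $L^2_{\mathrm{loc}}(\R^n\times(0,\infty))$ combined with convexity in $(p,q)$ of the integrand $(p,q)\mapsto\tfrac{G_{z_0}}{|t-t_0|}|(x-x_0)\cdot p+2(t-t_0)q|^2$ and the smooth bounded weight on that slab yields weak lower semicontinuity, so
$$
\iint\frac{|(x-x_0)\cdot\nabla u+2(t-t_0)\partial_tu|^2}{|t-t_0|}G_{z_0}\le\liminf_{\varepsilon\to0}\iint\frac{|(x-x_0)\cdot\nabla u_\varepsilon+2(t-t_0)\partial_tu_\varepsilon|^2}{|t-t_0|}G_{z_0}.
$$

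The remaining ingredient concerns the LHS. Only the joint weak-$\ast$ convergence $e_\varepsilon(u_\varepsilon)\,\d x\d t\rightharpoonup\d\mu_t\,\d t$ on $\R^n\times(0,\infty)$ is available, and convergence on individual time slices is not automatic. I will handle this by choosing $R_1,R_2$ outside a countable exceptional set: for Lebesgue-a.e.\ $R\in(0,\sqrt{t_0})$ the family of finite measures $\{e_\varepsilon(u_\varepsilon)(\cdot,t_0-R^2)G_{z_0}\,\d x\}_\varepsilon$ (well-defined because of the convexity of $t\mapsto E(u_\varepsilon(t))$, which forces a bounded variation structure in $t$) converges weakly to $G_{z_0}\d\mu_{t_0-R^2}$; equivalently, this follows by mollifying in $t$, passing to the limit, and using Lebesgue differentiation together with the absence of atoms in the time marginal of $\mu$ at all but countably many $R$. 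Combining these steps gives the lemma for a.e.\ pair $R_1<R_2$, and since both sides are continuous under further shrinking inside $(0,\sqrt{t_0})$ along an a.e.\ selection, the inequality extends.

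The main technical obstacle I anticipate is the time-slice identification step: the convergence of the LHS must be genuinely extracted from the ambient $\R^n\times(0,\infty)$ Radon measure convergence, and one must ensure that the exceptional $R$'s are truly negligible. The convexity of $t\mapsto E(u_\varepsilon(t))$ (Theorem \ref{th:energy_est}) is essential here, since it prevents concentration of mass on time slices uniformly in $\varepsilon$, allowing the interchange of the $\varepsilon$-limit with the slice evaluation. By contrast, bounding the $\varepsilon$-error terms $\mathcal{E}_\varepsilon^{(i)}$ and exploiting weak lower semicontinuity of the quadratic term are essentially routine given the energy bounds.
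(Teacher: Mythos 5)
The proposal is correct and takes essentially the same route as the paper: multiply the $\varepsilon$-level stationarity identity \eqref{stationarity06} by $4$, translate to the base point $z_0$, and send $\varepsilon\to 0$. Where the paper dispatches this in a single line, you have correctly isolated the nontrivial points it elides — the $O(\varepsilon)$ error terms vanish thanks to the uniform bound $\iint|\partial_t u_\varepsilon|^2\le E(u_0)/2$, the quadratic term passes via weak lower semicontinuity, and the boundary slices require an a.e.-in-$R$ identification of $\mu_t$ (which is the only delicate step and is what the subsequent $\tfrac{d}{dR}$-argument in Theorem \ref{frequency_mono} actually needs).
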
 
\begin{proof} \eqref{struwe_mono0} follows directly from \eqref{stationarity06} after sending $\eps\to 0$.
\end{proof}

\medskip
Now, for $Q\in X$, $z_0\in \R^n\times (0,\infty)$, and  $0<R<\sqrt{t_0}$, we define
\[
E(u; z_0, R)=2R^2\int_{t=t_0-R^2} G_{z_0}(x,t)\,d\mu_t(x),
\]
\[
H(u; z_0, R, Q)=\int_{t=t_0-R^2} d^2(u,Q)G_{z_0}\,dx.
\]

\smallskip
Without loss of generality, we may assume that for any
$z_0=(x_0, t_0)\in\R^n\times (0,\infty)$,
\begin{equation}\label{non-constant}
H(u; z_0, R, Q)>0,\ \forall 0<R<\sqrt{t_0}.
\end{equation}
For, otherwise, there exist $z_*=(x_*, t_*)
\in \R^n\times (0,\infty)$ and $R_*\in (0,\sqrt{t_*})$
such that
\[
d^2(u(x,t), Q)=0, \ \forall (x,t)\in \R^n\times 
\big\{t_*-R_*^2\big\}.  
\]
Since $d^2(u, Q)$ satisfies \eqref{sub_caloric}, it follows from the maximal principle that
\[
d^2(u(x,t), Q)=0, \ \forall (x,t)\in\R^n\times [t_*-R_*^2,\infty).
\]
That is, $u\equiv Q$ in $\R^n\times (t_*-R_*^2,\infty)$.  In particular, $u$ is Lipschitz continuous 
in $\R^n\times\{t_0\}$.

\medskip
With the help of \eqref{non-constant}, we can, as in \cite{Poon}, define the parabolic frequency function
\[
N(u; z_0, R, Q)=\frac{E(u; z_0, R)}{H(u; z_0, R, Q)}, \ \forall 0<R<\sqrt{t_0}.
\]
Then we have

\begin{theorem} \label{frequency_mono} For any $z_0=(x_0,t_0)\in \R^n\times (0,\infty)$ and $Q\in X$, 
$N(u; z_0, R, Q)$ is monotonically increasing for $0<R<\sqrt{t_0}$. In particular,
\begin{equation}\label{frequency_mono1}
N(u; z_0, Q)=\lim_{R\to 0^+} N(u; z_0, R, Q)
\end{equation}
exists and is upper semi-continuous in $z_0\in\R^n\times (0,\infty)$.
\end{theorem}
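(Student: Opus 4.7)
The plan is to adapt Poon's proof of parabolic frequency monotonicity to the CAT$(0)$ setting, carefully tracking the defect appearing in the sub-caloric inequality of Lemma \ref{sub_caloric0}. Fix $z_0 = (x_0, t_0)$ and $Q \in X$, work on the time slice $t = t_0 - R^2$, and define the nonnegative sub-caloric defect
\[
S(R) := \int \big[\Delta d^2(u,Q) - \partial_t d^2(u,Q) - 2\mu_t\big] G_{z_0}\,\d x\Big|_{t=t_0-R^2}\,\ge\, 0.
\]
Using $\partial_t G_{z_0} + \Delta G_{z_0} = 0$ together with integration by parts in $x$, one then computes the exact identity
\[
\frac{dH}{dR}(R) = \frac{2E(R)}{R} + 2R\,S(R).
\]
Independently, differentiating the Struwe monotonicity of Lemma \ref{struwe_mono} in $R$ yields
\[
\frac{dE}{dR}(R) \,\ge\, \frac{2}{R}\int |Zu|^2 G_{z_0}\,\d x\Big|_{t=t_0-R^2},\qquad Zu := (x-x_0)\cdot \nabla u + 2(t-t_0)\partial_t u.
\]

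The heart of the argument is a Poon-type identity in which the \emph{same} defect $S(R)$ reappears. Setting $Zd := (x-x_0)\cdot \nabla d(u,Q) + 2(t-t_0)\partial_t d(u,Q)$, a direct calculation using the explicit formulas $\textrm{div}((x-x_0)G_{z_0}) = [n + \tfrac{|x-x_0|^2}{2(t-t_0)}]G_{z_0}$ and $\Delta G_{z_0} = [\tfrac{n}{2(t-t_0)} + \tfrac{|x-x_0|^2}{4(t-t_0)^2}]G_{z_0}$, combined with the sub-caloric inequality, gives at $t = t_0 - R^2$
\[
\int d(u,Q)\,Zd(u,Q)\,G_{z_0}\,\d x = E(R) + R^2 S(R),
\]
the cross terms $-\tfrac{n}{2}H$ and $\tfrac{1}{4R^2}\int d^2|x-x_0|^2 G_{z_0}$ cancelling identically. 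Cauchy-Schwarz gives $(E + R^2 S)^2 \le H(R)\int |Zd|^2 G_{z_0}\,\d x$, and since $d(\cdot, Q):X \to \mathbb{R}$ is $1$-Lipschitz, the Korevaar-Schoen bound $|Zd|^2 \le |Zu|^2$ yields
\[
\int|Zu|^2 G_{z_0}\,\d x\,\ge\,\frac{(E(R) + R^2 S(R))^2}{H(R)}.
\]

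Combining the three inequalities, the algebra closes cleanly:
\[
N'(R)\,H(R)^2 = E'H - EH' \,\ge\, \frac{2(E+R^2 S)^2}{R} - E\Big(\frac{2E}{R} + 2RS\Big) = 2R\,S(R)\big(E(R) + R^2 S(R)\big) \,\ge\, 0,
\]
proving monotonicity of $R \mapsto N(u;z_0,R,Q)$. Since $N(R)\ge 0$ is monotone nondecreasing, the limit $N(u;z_0,Q) := \lim_{R \to 0^+}N(u;z_0,R,Q) = \inf_{R > 0}N(u;z_0,R,Q)$ exists. For each fixed $R > 0$, the map $z_0 \mapsto N(u;z_0,R,Q)$ is continuous by dominated convergence for $H$ and weak-$\ast$ continuity of the measure $\mu_t$ for $E$; hence the pointwise infimum $N(u;z_0,Q)$ is upper semi-continuous in $z_0$.

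The main obstacle is the ``defect miracle'' underlying the Poon identity: verifying that the nonnegative defect $S(R)$ appearing in $dH/dR$ is \emph{precisely} the same quantity that appears in $\int d(u,Q)\,Zd(u,Q)\,G_{z_0}\,\d x - E(R)$. This structural coincidence, which recovers Poon's original argument in the flat caloric case $S \equiv 0$, hinges on the explicit form of the backward heat kernel $G_{z_0}$ at $t = t_0 - R^2$ and on the cancellation of the second-moment terms, and is what closes the loop despite the sub-caloric inequality being strict in the CAT$(0)$ setting.
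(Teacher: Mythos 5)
Your proof is correct and follows essentially the same route as the paper: both arguments test the subharmonicity of $d^2(u,Q)$ against the backward heat kernel to control $\frac{dH}{dR}$, use Cauchy--Schwarz together with the $1$-Lipschitz property of $d(\cdot,Q)$ and the Struwe monotonicity of Lemma~\ref{struwe_mono} to bound $\frac{dE}{dR}$ from below, and then combine. The only real difference is bookkeeping: you name the sub-caloric defect $S(R)\ge 0$ and carry it through so that both $\frac{dH}{dR}=\frac{2E}{R}+2RS$ and $\int d\,Zd\,G_{z_0}\,dx=E+R^2S$ become exact identities, whereas the paper works directly with the two inequalities $E\le\frac{R}{2}\frac{dH}{dR}$ and $(\frac{dH}{dR})^2\le\frac{2}{R}H\frac{dE}{dR}$; the two organizations are equivalent (indeed $\frac{dH}{dR}\bigl(\frac{R}{2}\frac{dH}{dR}-E\bigr)=2RS(E+R^2S)$), so your version is a tidier restatement rather than a different argument.
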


\begin{proof} First, we observe that for all $t>0$, \eqref{sub_caloric} also holds in $\R^n$. 
Next we can test \eqref{sub_caloric} at $t=t_0-R^2$ for a sequence of test functions
$\phi_k(x)=G_{z_0}(x,t_0-R^2)\eta^1_k(x)$,
where $\eta^1_k\in C^\infty_0(B_{2k})$ satisfies $0\le \eta_k^1\le 1$, $\eta^1_k=1$ in $B_k$,  and $|\nabla\eta_k^1|\le \frac{4}{k}$.
Then we obtain, by sending $k\to\infty$,
\begin{align}
 E(u; z_0, R)&\le R^2\int_{t=t_0-R^2} d^2(u,Q)(\Delta +\partial_t) G_{z_0}
 +\frac{R}{2}\frac{d}{dR}\int_{t=t_0-R^2}d^2(u,Q)G_{z_0}\,dx\nonumber\\
 &=\frac{R}{2}\frac{d}{dR}\int_{t=t_0-R^2} d^2(u,Q)G_{z_0}(x, t)\,dx\nonumber\\
 &=\frac{R}2 \frac{d}{dR}H(u; z_0, R, Q),\label{subharmonic2}
\end{align}
where we have used 
the fact that $\Delta G_{z_0}+\partial_t G_{z_0}=0$
in $\R^n\times \{t_0-R^2\}$.

Now we calculate 
\begin{align*}
\frac{d}{dR}\log N(u; z_0, R,Q)&=\frac{d}{dR}\log E(u; z_0, R)-\frac{d}{dR}\log H(u; z_0, R,Q)\\
&=\frac{\frac{d}{dR}E(u; z_0, R)}{E(u; z_0,R)}-\frac{\frac{d}{dR}H(u; z_0, R, Q)}{H(u; z_0,R, Q)}.
\end{align*}

Define $u_R(y,t)=u(x_0+Ry, t_0+R^2 t)$. Then we have
\begin{align*}
\frac{d}{dR}H(u; z_0, R, Q)&=\frac{d}{dR}\int_{t=t_0-R^2} d^2(u, Q) G_{z_0}(x,t)\,dx\\
&=\frac{d}{dR}\int_{t=-1} d^2(u_R, Q) G(y,t)\,dy\\
&=2\int_{t=-1} d(u_R, Q) \frac{d}{dR}d(u_R,Q) G(y,t)\,dy
\end{align*}
so that by H\"older's inequality, we have
\begin{align*}
\big(\frac{d}{dR}H(u; z_0,R, Q)\big)^2&=4\Big(\int_{t=-1} d(u_R, Q) \frac{d}{dR}d(u_R,Q) G(y,t)\,dy\Big)^2\\
&\le 4\int_{t=-1} d^2(u_R, Q) G(y,t)\,dy
\int_{t=-1} \big(\frac{d}{dR}d(u_R, Q)\big)^2 G(y,t)\,dy\\
&=4 H(u; z_0, R, Q) \int_{t=-1} \big(\frac{d}{dR}d(u_R, Q)\big)^2 G(y,t)\,dy.
\end{align*}
Observe that since $d(y, Q): X\to \R$ is Lipschitz continuous, with Lipschitz norm at most $1$, it follows
that
\[
\big|\frac{d}{dR} d(u_R, Q)\big|
\le \big|\frac{d}{dR} u_R\big|.
\]
Hence, we arrive at
\begin{align*}
\big(\frac{d}{dR}H(u; z_0,R, Q)\big)^2\le 
4 H(u; z_0, R, Q) \int_{t=-1} \big|\frac{d}{dR}u_R\big|^2 G(y,t)\,dy.
\end{align*}
On the other hand, it follows from \eqref{struwe_mono0} that
\begin{align}\label{struwe_mono1}
\frac{d}{dR}E(u; z_0, R)&\ge \frac{2}{R}\int_{t=t_0-R^2} \big|(y-x_0)\cdot\nabla u+2(t-t_0)\partial_tu\big|^2G_{z_0} \,dy\nonumber\\
&=2R\int_{t=-1} \big|\frac{d}{dR} u_R\big|^2G \,dy.
\end{align}
Hence, we have
\[
\big(\frac{d}{dR}H(u; z_0, R, Q)\big)^2
\le \frac{2}{R}H(u; z_0,R, Q) \frac{d}{dR}E(u; z_0, R),
\]
which, combined with \eqref{subharmonic}, implies
that 
\begin{align*}
&\frac{d}{dR}E(u; z_0,R)H(u; z_0,R, Q)-E(u; z_0,R)
\frac{d}{dR}H(u; z_0,R, Q)\\
&\ge \frac{d}{dR}E(u; z_0,R)H(u; z_0,R, Q)
-\frac{R}2 \big(\frac{d}{dR}H(u; z_0, R, Q)\big)^2\\
&\ge 0.
\end{align*}
Therefore, we obtain
\[
\frac{d}{dR}N(u; z_0, R, Q)
\ge \displaystyle\frac{\frac{d}{dR}E(u; z_0, R)H(u; z_0, R, Q)-E(u; z_0,R)
\frac{d}{dR}H(u; z_0,R, Q)}{H^2(u; z_0,R, Q)}\ge 0.
\]
This completes the proof of Theorem \ref{frequency_mono}.
\end{proof}

\section{Regularity of the limiting map $u$} 
\label{sec:Lipregularity}
In this section, we first prove Theorem 1.1, part (3), namely the H\"older continuity of the limiting map $u$ holds for any locally compact
CAT(0) space $(X,d)$.  Then we  show Theorem 1.2, that is, the spatial Lipschitz regularity of limiting map $u$ 
when $(M,g)=(\R^n, dx^2)$.

\subsection{H\"older continuity} 


In this subsection, we prove  Theorem 1.1, part (3).  The argument is based on an application of Harnack's inequality
to \eqref{sub_caloric} from Lemma \ref{sub_caloric0}. We refer the reader to Lin \cite{Lin1996} (see also Jost \cite [Chapter 9] {Jost})
for the argument on harmonic maps into  locally compact CAT(0) spaces.

For any $z_0=(x_0,t_0)\in M\times (0,\infty)$ and $0<r<\min\big\{i_M, \sqrt{t_0}\big\}$, set $P_r(z_0)=B_r(x_0)\times (t_0-r^2, t_0)\subset M\times (0,\infty)$.
It suffices to show that there exists $0<\alpha<1$ such that 
the oscillation function of $u$ on $P_r(z_0)$,  $\omega(z_0,r)={\rm{diam}} (u(P_r(z_0))\leq C r^{\alpha}$ for all small $r>0$. 

For simplicity, we will assume $(M, g)=(\R^n, dx^2)$ so that $i_M=\infty$.  Set $r_0=\frac{\sqrt{t_0}}2$ and $\lambda_0=\frac{\omega(z_0, r_0)}2>0$, and define
$v(x,t)=u(x_0+r_0 x, t_0+r_0^2 t): P_1(0,0)\to (X, d_{\lambda_0})$,
where $d_{\lambda_0}(P, Q)={\lambda_0}^{-1}d(P,Q)$ for
$P,Q\in X$. It is easy to check 
that $(X, d_{\lambda_0})$ is also  a CAT(0)-space, $v$ has ${\rm{diam}}_{d_{\lambda_0}} (v(P_1(0,0))=2$, and 
\eqref{sub_caloric} implies that 
\begin{equation}\label{sub_caloric01}
\partial_t d_{\lambda_0}^2(v, Q)-\Delta d_{\lambda_0}^2(v, Q)\le 0 \ \ {\rm{on}} \ \ P_1(0,0).
\end{equation} 
From this reduction, we may assume that $u:P_1(0,0)\to (X,d)$ has ${\rm{diam}}(u(P_1(0,0)))=2$ and then show that there is $\alpha\in (0,1)$ such that 
$\omega((0,0), r) \le C r^{\alpha}$ for $0<r\le \frac12$. For this purpose, we first need to establish the following claim.

\medskip
\noindent{\it Claim}: There exist $0<\eps_0<\frac{1}{32}$ such that if $1<{\rm{diam}}(u(P_1(0,0)))\le 2$ and $u(P_1(0,0))$ is covered by $k$ balls $B(u(z_i), \eps)\subset X$, 
which have center $u(z_i)$ for some $z_i\in P_1(0,0)$ and radius $\eps\in (0, \eps_0]$,  for $i=1,\cdots, k$. Then $u(P_{\frac12}(0,0))$ can be covered by at most
$k-1$ balls among $B(u(z_1),\eps),\cdots, B(u(z_k),\eps)$. 

From ${\rm{diam}}(u(P_1(0,0)))\le 2$, $u(P_1(0,0))$ is contained in a ball $\overline{B}\subset X$ of radius $2$. It follows from the local compactness
of $X$ that $\overline{B}$ contains at most $k_1$ points whose mutual distance is at least $\frac{1}{16}$. Hence $k_1$ of
the balls $B(u(z_i), \frac1{8})$ cover $u(P_1)$, say $i=1,\cdots, k_1$.
This implies 
$$P_\frac12(0,0)\subset\bigcup_{i=1}^{k_1} u^{-1}(B(u(z_i), \frac18)).$$
Hence, there is at least one $i$, say $i=1$, such that
$$
\Big|u^{-1}(B(u(z_1), \frac18))\cap P_\frac12(0,0)\Big|\ge \delta_0:=\frac{|P_\frac12(0,0)|}{k_1}>0.
$$

Set $q_1=u(z_1)$ and define 
$$f(z)=\sup_{w\in P_1(0,0)} d^2(u(w), q_1)-d^2(u(z), q_1), \ z\in P_1(0,0).$$
Then we have $f\ge 0$ on $P_1$, and by \eqref{sub_caloric01} that $f$ is a weak solution of 
\[
\partial_t f-\Delta f\ge 0, \ \ {\rm{in}}\ \ P_1(0,0). 
\]
It follows from the Harnack inequality for non-negative sub-caloric
functions that 
\begin{align}\label{lower-bound-of-f}
\inf_{P_\frac12(0,0)} f&\ge C(n)\int_{P_\frac12(0,0)} f(z)\,dz\nonumber\\
&\ge C(n) \int_{u^{-1}(B(q_1, \frac18)\cap P_\frac12(0,0)} f(z)\,dz\nonumber\\
&\ge \frac{15C(n)}{64}\Big|u^{-1}(B(q_1, \frac18)\cap P_\frac12(0,0)\Big|\ge c(\delta_0),
\end{align}
where we have used 
\[
\inf_{z\in  u^{-1}(B(q_1, \frac18))\cap P_\frac12(0,0)} f(z)\ge\frac{15}{64},
\]
which follows from
$$\sup_{w\in P_1(0,0)} d(u(w), q_1)\ge \frac12
\ ({\mbox{thanks to diam}}(u(P_1(0,0)))\ge 1),
$$
and
\[
d(u(z), q_1)\le \frac18, \ \forall z\in u^{-1}(B(q_1, \frac18))\cap P_\frac12(0,0).
\]

\medskip
Now we want to show that \eqref{lower-bound-of-f} implies that for a sufficiently small
$\eps_0$, it is impossible that
\begin{equation}\label{no-missing-balls}
u(P_\frac12(0,0))\cap B(u(z_i), 2\eps)\not=\emptyset, \forall i=1,\cdots, k
\end{equation}
Suppose \eqref{no-missing-balls} were true. First, there exists  $z_*\in P_1(0,0)$ 
so that
\[
d(u(z_*), q_1)=\sup_{z\in P_1(0,0)}d(u(z), q_1).
\]
Second, since $u(P_1(0,0))\subset\bigcup_{i=1}^k B(u(z_i), 2\eps)$, there exists 
$1\le i\le k$ such that
\[
d(u(z_*), u(z_i))\le 2\eps.
\]
This, combined with \eqref{no-missing-balls}, implies that there exists
$w_*\in P_\frac12(0,0)$ such that $d(u(w_*), u(z_i))\le 2\eps$. Hence, we have
\[
d(u(w_*), u(z_*)\le d(u(w_*), u(z_i))+d(u(z_*), u(z_i)\le 4\eps\le 4\eps_0.
\]
Thus,
\begin{align*}
\inf_{P_\frac12(0,0)} f(z)\le f(w_*)
&=\sup_{w\in P_1(0,0)} d^2(u(w), q_1)-d^2(u(w_*), q_1)\\
&=d^2(u(z_*), q_1)-d^2(u(w_*), q_1)\\
&=(d(u(z_*), q_1)+d(u(w_*), q_1))(d(u(z_*), q_1)-d(u(w_*), q_1))\\
&\le 2{\rm{diam}}(u(P_1(0,0)))d(u(z_*), u(w_*))\le 16\eps_0.
\end{align*}
This contradicts \eqref{lower-bound-of-f}, provided we choose
$\eps_0<\frac{\delta_0}{32}$.

Finally, since ${\rm{diam}}(u(P_1(0,0)))=2$, there exists an integer $M_0=M_0(X)\ge 1$ such that 
$u(P_1(0,0))$ can be covered by $M_0$ balls, $\{B_i^X\}_{i=1}^{M_0}$, of radius $\eps_0$. 
Suppose ${\rm{diam}}(u(P_\frac12(0,0)))>1$. Then we can repeat the same arguments
to $v(x,t)=u(\frac{x}2, \frac{t}{4}), \ (x,t)\in P_1(0,0)$ to conclude that
$u(P_\frac14(0,0))$ is covered by at most $M_0-2$ balls. Since this procedure
can continue at most $M_0$ steps, we must have
$${\rm{diam}}(u(P_{2^{-M_0}}(0,0)))\le 1:=\frac12 {\rm{diam}}(u(P_1(0,0))).$$
Iterating this inequality finitely many times yields that there exists $\alpha\in (0,1)$ such that
$${\rm{\diam}}(u(P_r(0,0)))\le C r^\alpha, \ \forall 0<r<\frac12.$$
Hence H\"older continuity of $u$ follows.
\qed

\subsection{Lower bound of frequency function} 
{First,  we recall the metric differentiability by Kirchheim \cite{Kirchheim1994}.  

\begin{definition} Given a measurable map $f: \R^n\to (X,d)$ and $x_0\in\R^n$, we say that $f$ is metric differentiable at $x_0$, if there exists a {\it seminorm}
${\rm md}_{x_0}(f): \R^n\to \R_+$, called metric differential of $f$ at $x_0$, such that  
\begin{equation}
d(f(x), f(x_0))= {\rm md}_{x_0}(f)(x-x_0)+o(|x-x_0|),
\end{equation}
or, equivalently,
\begin{equation}
\lim_{x\to x_0}\frac{\big|d(f(x), f(x_0))-{\rm md}_{x_0}(f)(x-x_0)\big|}{\big|x-x_0\big|}=0.
\end{equation}
\end{definition}

\smallskip
Next, we recall the metric differentiability theorem by \cite{Kirchheim1994}, which says that for any Lipschitz map $f:\R^n\to (X,d)$, that is,  there exists $C>0$
such that $d(f(x), f(y))\le C|x-y|$ for $x, y\in\R^n$, then $f$ is metric differentiable for almost every $x\in \R^n$. Moreover, the map $x\mapsto {\rm{md}}_x(f)$ is a
Borel map.

Finally, we recall a special case of the approximate metric differentiability result by Gigli-Tyulenev \cite{GigliTyulenev} for a $H^1$-map from $\R^n$ to $(X,d)$. 

\begin{theorem} \label{app-diff1}For any complete metric space $(X,d)$, If $f\in H^1_{\rm{loc}}(\R^n, X)$ is a locally $H^1$-map, 
then $f$ is approximately metric differentiable for almost every $x\in \R^n$, that is, for a.e. $x\in \R^n$, it holds
\begin{equation}\label{app-diff2}
\lim_{r\to 0} \frac{\Big|\big\{y\in B_r(x):  |d(f(y), f(x))-{\rm{md}}_x(f)(y-x)\big|>\varepsilon |y-x|\big\}\Big|}{\omega_n r^n}=0, \ \forall \varepsilon>0,
\end{equation}
where $\omega_n=|B_1(0)|$ is the volume of unit ball in $\R^n$.
\end{theorem}
\begin{proof} See \cite[Proposition 3.6]{GigliTyulenev} for the proof of general cases where the domain is allowed to be any strongly rectifiable space, which,  in particular, 
includes $\R^n$. Here one uses the fact that any $H^1$-map $f:\R^n\to X$ enjoys the Lusin-Lipschitz property, that is, off a set of small measure, $f$ equals to a Lipschitz
map. This enables us to apply the a.e. metric differentiability theorem of Kirchheim on Lipschitz maps to deduce the approximate metric differentiability for $f$.
\end{proof}

\begin{theorem} \label{lowerboundfreq} For $u_0\in H^1(\R^n, X)$,  let $u$ be given by Theorem \ref{th:main_intro_0} and $X$ be a CAT$(0)$ space. 
Then, for a.e. $z_0=(x_0,t_0)\in \R^n\times (0,\infty)$ it holds that
\begin{equation}\label{lowerboundfreqquency}
N(u; z_0, u(z_0))=\lim_{R\to 0} {N}(u; z_0, R, u(z_0))\ge 1.
\end{equation}
\end{theorem}

\begin{proof}  First, since $u\in L^2_{\rm{loc}}(\R^n\times (0,\infty), X)$ satisfies 
$\int_0^T\int_{\R^n} |\partial_t u|^2\,dxdt<\infty$ for $0<T<\infty$ and $\sup_{t>0}\int_{\R^n}|\nabla u(x,t)|^2\,dx<\infty$, we must have that by the differentiation theorem, 
for a.e. $x_0\in\R^n$ and a.e. $t_0\in (0,\infty)$, 
\begin{equation}\label{app-diff3}
\lim_{\delta\to 0}  \fint_{P_\delta(z_0)} \big||\nabla u|^2-|\nabla u|^2(z_0)\big|\,dxdt=0
\ \ {\rm{and}}\ \ \lim_{\delta\to 0} \fint_{P_\delta(z_0)} \big||\partial_t u|^2-|\partial_t u|^2(z_0)\big|\,dxdt=0
\end{equation} 
\begin{equation}\label{app-diff30}
\lim_{\delta\to 0}  \fint_{P_\delta(z_0)} \big(d^2(u(x,t), u(x_0,t))+d^2(u(x_0, t), u(x_0,t_0))\Big)\,dxdt=0,
\end{equation}
and
\begin{equation}\label{app-diff31}
 \lim_{\delta\to 0} \fint_{P_\delta(z_0)} \big||\partial_t u(x_0,t)|^2-|\partial_t u|^2(z_0)\big|\,dxdt=0,
\end{equation} 
hold  for $z_0=(x_0,t_0)$.  According to \cite{KS}, it holds that 
\begin{equation}\label{app-diff4}
|\nabla u(x_0, t_0)|^2=(n+2)\lim_{\delta\to 0} \delta^{-2}\fint_{B_\delta(x_0)} d^2(u(x,t_0), u(z_0))\,dx.
\end{equation}
Moreover, by Theorem \ref{app-diff1},  we assume that $u(\cdot, t_0)$ is approximately metric differentiable at $x_0$, that is, 
\begin{equation}\label{app-diff4}
\lim_{\delta\to 0} \fint_{B_\delta(x_0)} \Big|\frac{d^2(u(x,t_0), u(z_0))}{\delta^2}-\frac{1}{n+2}\fint_{\mathbb{S}^{n-1}}{\rm{md}}_{x_0}^2(u(\cdot, t_0))(\theta)\,d\mathcal{H}^{n-1}(\theta)\Big|\,dx=0,
\end{equation}
and
\begin{equation}\label{app-diff40}
\lim_{\delta\to 0} \fint_{B_\delta(x_0)} \Big|\frac{{\rm{md}}_{x_0}^2(u(\cdot, t_0))(x-x_0)}{\delta^2}-\frac{1}{n+2}\fint_{\mathbb{S}^{n-1}}{\rm{md}}_{x_0}^2(u(\cdot, t_0))(\theta)\,d\mathcal{H}^{n-1}(\theta)\Big|\,dx=0,
\end{equation}

Now we divide the argument into two cases: 

\noindent {\bf Case 1}.  The  energy density of $u$ at $z_0$, $|\nabla u|^2(z_0)=|A_0|^2> 0$.  We will proceed this case in three steps.

\medskip
\noindent {\it Step 1}. We claim that \eqref{app-diff3} and \eqref{app-diff4} imply
\begin{align}\label{app-diff5}
\lim_{\delta\to 0}\sup_{t_0-\delta^2\le t\le t_0}\fint_{B_\delta(x_0)}
\Big|\frac{d^2(u(x,t), u(x_0,t))}{\delta^2}-\frac{1}{n+2}\fint_{\mathbb{S}^{n-1}}{\rm{md}}^2_{x_0}(u(\cdot, t_0))(\theta)\,d\mathcal{H}^{n-1}(\theta)\Big|\,dx=0.
\end{align}
To see \eqref{app-diff5}, let $t_0-\delta^2\le t_1\le t_0$. Then
\begin{align*}
&\Big|\delta^{-(n+2)}\int_{B_\delta(x_0)}
d^2(u(x,t_1), u(x_0, t_1))\,dx
-\delta^{-(n+2)}\int_{B_\delta(x_0)}
d^2(u(x,t_0), u(x_0, t_0))\,dx\Big|\\
&\le 2\delta^{-(n+2)}
\int_{t_1}^{t_0}\int_{B_\delta(x_0)}d(u(x,t), u(x_0, t_0))\big|\partial_td(u(x,t), u(x_0, t))\big| \,dxdt\\
&\le 2\delta^{-(n+2)}
\int_{t_1}^{t_0}\int_{B_\delta(x_0)}d(u(x,t), u(x_0, t))\big(|\partial_tu|(x,t)+| \partial_tu|(x_0, t)\big) \,dxdt\\
&\le C\Big(\fint_{P_\delta(z_0)}d^2(u(x,t), u(x_0, t))\,dxdt\Big)^\frac12 \Big(\fint_{P_\delta(z_0)}\big(|\partial_t u|^2(x,t)+|\partial_t u|^2(x_0,t)\big)\,dxdt\Big)^\frac12.
\end{align*}
Taking supremum of $t_1\in [t_0-\delta^2, t_0]$, we obtain
\begin{align*}
&\Big|\sup_{t_1\in [t_0-\delta^2, t_0]}\delta^{-2}\fint_{B_\delta(x_0)}
d^2(u(x,t_1), u(x_0, t_1))\,dx
-\delta^{-2}\fint_{B_\delta(x_0)}
d^2(u(x,t_0), u(x_0, t_0))\,dx\Big|\\
&\le 
C\Big(\fint_{P_\delta(z_0)}d^2(u(x,t), u(x_0, t))\,dxdt\Big)^\frac12 \Big(\fint_{P_\delta(z_0)}\big(|\partial_t u|^2(x,t)+|\partial_t u|^2(x_0,t)\big)\,dxdt\Big)^\frac12.
\end{align*}
After sending $\delta\to 0$ and applying \eqref{app-diff30} and \eqref{app-diff4}, this yields \eqref{app-diff5}.

\medskip
On the other hand, direct calculations imply 
\begin{align}\label{app-diff50}
&\delta^{-(n+2)}\sup_{t_0-\delta^2\le t\le t_0}\int_{B_\delta(x_0)} d^2(u(x_0,t), u(x_0,t_0))\,dx\nonumber\\
&\le 2\fint_{P_\delta(z_0)} d(u(x_0,t), u(x_0,t_0))|\partial_t u|(x_0, t)\,dxdt\nonumber\\
&\le 2\Big(\fint_{P_\delta(z_0)} d^2(u(x_0,t), u(x_0,t_0))\,dxdt\Big)^\frac12\Big(\fint_{P_\delta(z_0)} |\partial_t u|^2(x_0, t)\,dxdt\Big)^\frac12\nonumber\\
&\to 0 
\end{align} 
as $\delta\to 0$, where we have used \eqref{app-diff30} and \eqref{app-diff31} in the last step.

\medskip
Combining \eqref{app-diff5} with \eqref{app-diff51}, we obtain that
\begin{align}\label{app-diff51}
\lim_{\delta\to 0}\sup_{t_0-\delta^2\le t\le t_0}\fint_{B_\delta(x_0)}
\Big|\frac{d^2(u(x,t), u(x_0,t_0))}{\delta^2}-\frac{1}{n+2}\fint_{\mathbb{S}^{n-1}}{\rm{md}}^2_{x_0}(u(\cdot, t_0))(\theta)\,d\mathcal{H}^{n-1}(\theta)\Big|\,dx=0.
\end{align}

\medskip
\noindent{\it Step 2}.  In order to show $N(u;z_0, u(z_0))\ge 1$, we proceed as follows. From \eqref{app-diff51} and \eqref{app-diff4}-\eqref{app-diff40} , there exists $\delta_0>0$
such that for any $0<R,\delta<\delta_0$, it holds that 
\begin{equation}\label{app-diff52}
\fint_{B_\delta(x_0)}
\Big|\frac{d^2(u(x,t), u(x_0,t_0))}{\delta^2}-\frac{1}{n+2}\fint_{\mathbb{S}^{n-1}}{\rm{md}}^2_{x_0}(u(\cdot, t_0))(\theta)\,d\mathcal{H}^{n-1}(\theta)\Big|\,dx=o_{\delta_0}(1),
\end{equation}
and
\begin{equation}\label{app-diff53}
\delta^{-2}\fint_{B_\delta(x_0)}
\Big|d^2(u(x,t_0-R^2), u(x_0,t_0))-{\rm{md}}_{x_0}^2(u(\cdot, t_0-R^2))(x-x_0)\Big|\,dx=o_{\delta_0}(1),
\end{equation}

where $\displaystyle\lim_{\delta_0\to 0} o_{\delta_0}(1)=0$.

For $0<R<\delta_0$, we decompose
\begin{align}\label{dist_compare}
H(u; z_0, R, u(z_0))
&=\Big\{\int_{B_{\delta_0}(x_0)}
+\int_{\R^n\setminus B_{\delta_0}(x_0)}\Big\}
d^2\big(u(x,t_0-R^2), u(z_0)\big) G_{z_0}(x,t_0-R^2)\,dx\nonumber\\
&=I+II.
\end{align}
It follows from the assumption $d(u_0, Q)\in L^\infty(\R^n)$ and \eqref{sub_caloric} that
\begin{equation}\label{bound}
\big\|d(u(\cdot,t), Q)\big\|_{L^\infty(\R^n)}\le\big\|d(u_0, Q)\big\|_{L^\infty(\R^n)}, \forall t\ge 0,
\end{equation}
hence
\begin{align*}
\big|II\big|&\le \int_{\R^n\setminus B_{\delta_0}(x_0)} d^2(u(x, t_0-R^2), u(z_0))
G_{z_0}(x,t_0-R^2)\,dx\\
&\le 2 \int_{\R^n\setminus B_{\delta_0}(x_0)} \big(d^2(u(x, t_0-R^2), Q)+d^2(u(z_0), Q)\big)
G_{z_0}(x,t_0-R^2)\,dx\\
&\le 2\big\|d(u_0, Q)\big\|_{L^\infty(\R^n)}\frac{R^2}{\delta_0^2}\int_{\R^n\setminus B_{\frac{\delta_0}{R}}} |y|^2 e^{-\frac{|y|^2}4}\,dy\\
&\le C(u_0, \delta_0, Q)o_R(1)R^2.
\end{align*}
Here $o_R(1)$ is a quantity, satisfying $\displaystyle\lim_{R\to 0} o_R(1)=0$.

To estimate $I$, we first apply the triangle inequality to bound
\begin{align*}
I^\frac12&\le\Big(\int_{B_{\delta_0}(x_0)}\big({\rm{md}}_{x_0}(u(\cdot, t_0))(x-x_0)\big)^2G_{z_0}(x,t_0-R^2)\,dx\Big)^\frac12\\
&\ \ + \Big(\int_{B_{\delta_0}(x_0)}\Big|d^2\big(u(x,t_0-R^2), u(z_0)\big)-\big({\rm{md}}_{x_0}(u(\cdot, t_0))(x-x_0)\big)^2\Big| G_{z_0}(x,t_0-R^2)\,dx\Big)^\frac12\\
&=I_1+I_2.
\end{align*}

By direct calculations, we can bound
\begin{align*}
I_1^2&=\int_{B_{\delta_0}(x_0)}\big({\rm{md}}_{x_0}(u(\cdot, t_0))(x-x_0)\big)^2G_{z_0}(x,t_0-R^2)\,dx\\
&\le \int_{\R^n}\big({\rm{md}}_{x_0}(u(\cdot, t_0))(x-x_0)\big)^2G_{z_0}(x,t_0-R^2)\,dx\\
&=\Big(R^2\int_0^\infty r^{n+1} \frac{1}{(4\pi)^{\frac{n}2}}e^{-\frac{r^2}4}\,dr\Big)\int_{\mathbb{S}^{n-1}} \big({\rm{md}}_{x_0}(u(\cdot, t_0))(\theta)\big)^2\,d\mathcal{H}^{n-1}(\theta)\\
&=2R^2\fint_{\mathbb{S}^{n-1}}\big({\rm{md}}_{x_0}(u(\cdot, t_0))(\theta)\big)^2\,d\mathcal{H}^{n-1}(\theta).
\end{align*}

Now we want to estimate $I_2$. For this, denote the integer part of $\displaystyle\frac{\ln(\frac{\delta_0}{R})}{\ln 2}$ by 
$k_0=\displaystyle\Big[\frac{\ln(\frac{\delta_0}{R})}{\ln 2}\Big]$. Set $ A_m:=B_{2^m R}(x_0)\setminus B_{2^{m-1}R}(x_0)$ for $1\le m\le k_0+1$.
Then we can bound $I_2$ by
\begin{align*}
{I_2}^2&\le \Big\{\int_{B_{R}(x_0)}+\sum_{m=1}^{k_0+1}
\int_{A_m}\Big\}
\Big|d^2(u(x,t_0-R^2), u(x_0,t_0))-({\rm{md}}_{x_0}(u(\cdot, t_0))(x-x_0))^2\Big|G_{z_0}(x,t_0-R^2)\,dx\\
&\le R^{-n}\int_{B_R(x_0)}\Big|d^2(u(x,t_0-R^2), u(x_0,t_0))-({\rm{md}}_{x_0}(u(\cdot, t_0))(x-x_0))^2\Big|\,dx\\
&+\sum_{m=1}^{k_0+1}e^{-4^{m-2}}
R^{-n}\int_{A_m}
\Big|d^2(u(x,t_0-R^2), u(x_0,t_0))-({\rm{md}}_{x_0}(u(\cdot, t_0))(x-x_0))^2\Big|\,dx\\
&\le R^2\Big(R^{-2}\fint_{B_R(x_0)}\Big|d^2(u(x,t_0-R^2), u(x_0,t_0))-({\rm{md}}_{x_0}(u(\cdot, t_0))(x-x_0))^2\Big|\,dx\\
&+\sum_{m=1}^{k_0+1}2^{m(n+2)}e^{-4^{m-2}}(2^m R^2)^{-2}
\fint_{B_{2^m R}(x_0)}
\Big|d^2(u(x,t_0-R^2), u(x_0,t_0))-({\rm{md}}_{x_0}(u(\cdot, t_0))(x-x_0))^2\Big|\,dx\Big)\\
&\le o_{\delta_0}(1)R^2 \Big(1+\sum_{m=1}^{k_0+1} 2^{m(n+2)} e^{-4^{m-2}}\Big)=o_{\delta_0}(1)R^2.
\end{align*}
Thus, we obtain
\begin{align}\label{H-upbound}
{H}(u;z_0, R, u(z_0))
\le 2R^2\fint_{\mathbb{S}^{n-1}}\big({\rm{md}}_{x_0}(u(\cdot, t_0))(\theta)\big)^2\,d\mathcal{H}^{n-1}(\theta)+\big(o_R(1)+o_{\delta_0}(1)\big) R^2, 
\ \forall 0<R<\delta_0.
\end{align}

\medskip
\noindent{\it Step 3}. For $0<R<\min\big\{\frac{\sqrt{t_0}}{2}, \delta_0\big\}$,
define
\begin{align*}
\widehat{E}(u; z_0, R) 
=2\int^{t_0-\frac{R^2}4}_{t_0-R^2}\int_{\R^n}
|\nabla u|^2 (x,t)G_{z_0}(x,t)\,dxdt.
\end{align*}
By the triangle inequality, we have 
\begin{align*}
\Big(\widehat{E}(u;z_0, R)\Big)^\frac12
&\ge \Big(2\int^{t_0-\frac{R^2}4}_{t_0-R^2}\int_{B_{\delta_0}(x_0)}
|\nabla u|^2 (z_0)G_{z_0}(x,t)\,dxdt\Big)^\frac12\\
&\quad-\Big(2\int^{t_0-\frac{R^2}4}_{t_0-R^2}\int_{B_{\delta_0}(x_0)}
\big||\nabla u|^2 (x,t)-|\nabla u|^2(z_0)\big|G_{z_0}(x,t)\,dxdt\Big)^\frac12\\
&=C-D.
\end{align*}
We can bound $C$ from below by 
\begin{align*}
C^2&=2\int^{t_0-\frac{R^2}4}_{t_0-R^2}\int_{\R^n}
|\nabla u|^2(z_0)G_{z_0}(x,t)\,dxdt-2|\nabla u|^2(z_0)
\int^{t_0-\frac{R^2}4}_{t_0-R^2}\int_{\R^n\setminus B_{\delta_0}(x_0)}G_{z_0}(x,t)\,dxdt \\
&=2\int^{t_0-\frac{R^2}4}_{t_0-R^2}\int_{\R^n}
|\nabla u|^2(z_0)G_{z_0}(x,t)\,dxdt-2|\nabla u|^2(z_0)\int^{R^2}_{\frac{3R^2}4}
\frac{1}{(4\pi t)^{\frac{n}2}}\int_{\R^n\setminus B_{\delta_0}(0)}e^{-\frac{|x|^2}{4t}}\,dxdt\\
&\ge 2\int^{t_0-\frac{R^2}4}_{t_0-R^2}\int_{\R^n}
|\nabla u|^2(z_0)G_{z_0}(x,t)\,dxdt-C\frac{R^4}{\delta_0^2}\int_{\{|y|\ge \frac{\delta_0}{R}\}}
|y|^2e^{-\frac{|y|^2}4}\,dy\\
&\ge 2\int^{t_0-\frac{R^2}4}_{t_0-R^2}\int_{\R^n}
|\nabla u|^2(z_0)G_{z_0}(x,t)\,dxdt-C\frac{R^4}{\delta_0^2}.
\end{align*}
Applying \eqref{app-diff3}, we can bound $D$ from above by
\begin{align*}
D^2&\le 2\int^{t_0-\frac{R^2}4}_{t_0-R^2}\int_{B_{R}(x_0)}
\big||\nabla u|^2(x,t)-|\nabla u|^2(z_0)\big|G_{z_0}(x,t)\,dxdt\\
&\quad+2\sum_{m=1}^{k_0+1}
\int^{t_0-\frac{R^2}4}_{t_0-R^2}\int_{B_{2^mR}(x_0)\setminus B_{2^{m-1}R}(x_0)}
\big||\nabla u|^2(x,t)-|\nabla u|^2(z_0)\big|G_{z_0}(x,t)\,dxdt\\
&\le CR^{-n}\int^{t_0-\frac{R^2}4}_{t_0-R^2}\int_{B_{R}(x_0)}
\big||\nabla u|^2(x,t)-|\nabla u|^2(z_0)\big|\,dxdt\\
&\quad+CR^{-n}\sum_{m=1}^{k_0+1} e^{-4^{m-2}}
\int^{t_0-\frac{R^2}4}_{t_0-R^2}\int_{B_{2^mR}(x_0)}
\big||\nabla u|^2(x,t)-|\nabla u|^2(z_0)\big|\,dxdt\\
&\le CR^2\Big[R^{-(n+2)}\int_{P_R(z_0)}\big||\nabla u|^2(x,t)-|\nabla u|^2(z_0)\big|\,dxdt\\
&\qquad+\sum_{m=1}^{k_0+1}e^{-4^{m-2}}2^{m(n+2)} (2^mR)^{-(n+2)}\int_{P_{2^m R}(z_0)}\big||\nabla u|^2(x,t)-|\nabla u|^2(z_0)\big|\,dxdt\Big]\\
&\le C o_{\delta_0}(1) R^2
\Big(1+\sum_{m=1}^{k_0+1}e^{-4^{m-2}}2^{m(n+2)}\Big)\\
&=o_{\delta_0}(1) R^2.
\end{align*}
Thus, we obtain
\begin{align}
\widehat{E}(u; z_0, R)
\ge 2\int^{t_0-\frac{R^2}4}_{t_0-R^2}\int_{\R^n}
|\nabla u|^2(z_0)G_{z_0}(x,t)\,dxdt -C\delta_0^{-2}R^4+o_{\delta_0}(1)R^2.
\end{align}
By the mean value theorem, we conclude that
there exists $R_*\in (\frac{R}2, R)$ such that
\begin{align}\label{lower_bound_of E}
{E}(u; z_0, R_*)
&\ge 2R_*^2\int_{\R^n}
  |\nabla u|^2(z_0)G_{z_0}(x,t_0-R_*^2)\,dx+o_{\delta_0}(1)R_*^2-C\delta_0^{-2}R_*^4\nonumber\\
  &\ge 2|\nabla u|^2(z_0)R_*^2+o_{\delta_0}(1)R_*^2-C\delta_0^{-2}R_*^4.
\end{align}
Putting \eqref{H-upbound} and \eqref{lower_bound_of E} together, we see that for any sufficiently small $0<R<\delta_0$, there exists $R_*\in (\frac{R}2, R)$ such that
\begin{align*}
  \frac{E(u;z_0, R_*)}{{H}(u;z_0, R_*, u(z_0))}
  \ge \displaystyle\frac{|\nabla u(z_0)|^2}{\fint_{\mathbb{S}^{n-1}}\big({\rm{md}}_{x_0}(u(\cdot, t_0))(\theta)\big)^2\,d\mathcal{H}^{n-1}(\theta)}+(o_{\delta_0}(1)+o_{R_*}(1)).
\end{align*}

On the other hand, combining \eqref{app-diff4} with \eqref{app-diff40} yields
\begin{equation}\label{ks=md}
|\nabla u(z_0)|^2=\fint_{\mathbb{S}^{n-1}}\big({\rm{md}}_{x_0}(u(\cdot, t_0))(\theta)\big)^2\,d\mathcal{H}^{n-1}(\theta).
\end{equation}
Hence we obtain
\begin{align*}
  \frac{E(u;z_0, R_*)}{{H}(u;z_0, R_*, u(z_0))}
  \ge 1+(o_{\delta_0}(1)+o_{R_*}(1)).
\end{align*}
This, combined with Theorem \ref{frequency_mono}, implies $N(u;z_0, u(z_0))\ge 1$.
The completes the proof for {\bf Case 1}.

\bigskip
\noindent{\bf Case 2}. $A_0=|\nabla u(z_0)|=0$. We need to show Lemma \ref{lowerboundfreq} remains true in this case. 
%
To do it, we consider an augment variable argument. 
More precisely, let $\widehat{X}=X\times \R^n$, equipped with distance function $\widehat{d}$ given by
\[
\widehat{d}^2((p_1, q_1), (p_2, q_2))
=d^2(p_1, p_2)+|q_1-q_2|^2,
\ \forall (p_1, q_1), \ (p_2, q_2)\in \widehat{X}=X\times \R^n.
\]
For any $\delta>0$, define the map $u^\delta:
\R^n\times (0,\infty)\to \widehat{X}$ by 
\[
u^\delta(x,t)=\big(u(x,t), \delta x), \ (x,t)\in\R^n\times (0,\infty).
\]
Then it is easy to check that \\
1) The map $u^\delta$ is approximately differentiable at $z_0$ with the approximate spatial derivative
\begin{equation}\label{nonvanishing1}
\nabla u^\delta(z_0)=(\nabla u(z_0), \delta \mathbb{I}_n)
=(0, \delta\mathbb{I}_n)\not=0.
\end{equation}
2) From \eqref{sub_caloric}, it follows that $u^\delta$ is a sub caloric function, i.e.,  
\begin{align}\label{sub_caloric2}
(\partial_t -\Delta)\widehat{d}^2 
(u^\delta, u^\delta(z_0))
\le -2\mu_t^\delta \,dt, \ \mu_t^\delta=|\nabla u^\delta|^2\,dx+\nu_t=(|\nabla u|^2+n\delta)\,dx+\nu_t.
\end{align}
3) The map $u^\delta$ satisfies Struwe's monotonicity inequality similar to \eqref{struwe_mono0}, that is,
\begin{align}
2 R_2^2\int_{t=t_0-R_2^2}\mu_t^\delta G_{z_0}
-2 R_1^2\int_{t=-R_1^2}\mu_t^\delta G_{z_0}
\ge\iint_{\R^n\times [t_0-R_2^2, t_0-R_1^2]}\frac{1}{|t|}
\big|x\cdot\nabla u^\delta +2t \partial_tu^\delta\big|^2 G_{z_0}
\end{align}
holds for $0<R_1<R_2<\sqrt{t_0}$. In particular, for a.e.
$0<R<\sqrt{t_0}$ it holds 
\begin{align}\label{stationarity12}
\frac{d}{dR}E(u^\delta; z_0, R)
\ge \frac{2}{R}
\int_{\R^n\times \{t_0-R^2\}}
\big|x\cdot\nabla u^\delta +2t \partial_tu^\delta\big|^2 G_{z_0}\,dx,
\end{align}
where
\[
E(u^\delta; z_0, R)=
2 R^2\int_{\R^n\times \{t_0-R^2\}}\mu_t^\delta G_{z_0}.
\]

From \eqref{nonvanishing1}, \eqref{sub_caloric2}, and
\eqref{stationarity12},  we can apply the same
arguments as in section 4 to $u^\delta$ and conclude that
the frequency function of $u^\delta$ at $z_0$ given by
\begin{align}\label{modified_mono}
N(u^\delta; z_0, R, u^\delta(z_0))=\frac{E(u^\delta; z_0, R)}{H(u^\delta; z_0, R, u^\delta(z_0))}:=
\frac{\displaystyle 2 R^2\int_{\R^n\times \{t_0-R^2\}}\mu_t^\delta G_{z_0}\,dx}
{\displaystyle\int_{\R^n\times \{t_0-R^2\}}\widehat{d}^2(u^\delta(x,t), u^\delta(z_0)) G_{z_0}\,dx}
\end{align}
is monotonically non-decreasing with respect to $0<R<\sqrt{t_0}$,
and Lemma \ref{lowerboundfreq} implies that 
\begin{align}\label{lower_bound_frequency00}
N(u^\delta;z_0, u(z_0))=\lim_{R\to 0} N(u^\delta;z_0, R, u^\delta(z_0))\ge 1.    
\end{align}
This, after sending $\delta\to 0$, implies that
\begin{align}\label{mono_and_lowerbound}
N(u;z_0, R, u(z_0))\ge 1.
\end{align}
holds for all $0<R<\sqrt{t_0}$. In particular, we show
that 
\[
N(u;z_0, u(z_0))=\lim_{R\to 0} N(u;z_0, R, u(z_0))\ge 1.
\]
The proof is complete for {\bf Case 2}. 
\end{proof} 
}

\subsection{Proof Theorem \ref{Lip_reg}} It follows Theorem \ref{lowerboundfreq} and the upper semicontinuity of 
$N(u; z, u(z))$ that for all $z_0\in \R^n\times (0,\infty)$, 
\[
\alpha_0=N(u; z_0, u(z_0))\ge 1.
\]
Furthermore, it follows from
\[
E(u;z_0, R)\le \frac{R}{2}\frac{d}{dR}H(u;z_0,R, u(z_0)), \ \ 0<R<\sqrt{t_0}
\]
that
\begin{equation}\label{H-bound0}
\frac{d}{dR}\ln H(u;z_0, R, u(z_0))\ge\frac{2}{R} N(u; z_0, R, u(z_0))\ge \frac{2\alpha_0}{R},
\ \ 0<R<\sqrt{t_0}.
\end{equation}
Integrating this inequality from $R$ to $\sqrt{t_0}$, we obtain
\begin{equation}
\label{H-bound}
H(u;z_0, R, u(z_0))\le C_0 R^{2\alpha_0},\ \ 0<R<\sqrt{t_0},
\end{equation}
where
\[
C_0=\frac{H(u;z_0, \sqrt{t_0}, u(z_0))}{t_0^{\alpha_0}}=
t_0{^{-\alpha_0}}\int_{\R^n} d^2(u_0(x), u(z_0))G_{z_0}(x,0)\,dx
\le C(u_0) t_0^{-\alpha_0}.\]

From \eqref{H-bound}, we have that for any $z_0=(x_0,t_0)\in \R^n\times (0,\infty)$,
\begin{equation}\label{lip1}
R^{-n}\int_{B_{R}(x_0)\times \{t=t_0-R^2\}}
d^2(u(y,t), u(z_0))\,dy
\le e^{\frac14} C_0 R^{2\alpha_0}\le C_1R^2,
\end{equation}
holds for all $0<R\le \min\big\{1, \frac{\sqrt{t_0}}2$\big\}.

Now, let $x_1, x_2\in \R^n$ be two points
such that 
$$R_*:=|x_1-x_2|\le \min\big\{1, \displaystyle\frac{\sqrt{t_0}}4\big\}.$$
Set $z_1=(x_1, t_0), z_2=(x_2, t_0).$
Then, by \eqref{lip1},  we have
\begin{equation}\label{lip2}
R_*^{-n}\int_{B_{R_*}(x_i)\times \{t=t_0-R_*^2\}}
d^2(u(y,t), u(z_i))\,dy
\le  C_1R_*^2,
\ i=1,2.
\end{equation}
Set $\displaystyle x_*=\frac{x_1+x_2}2$. Applying \eqref{lip1} and the triangle inequality, we can bound
\begin{align*}
 &d^2(u(z_1), u(z_2))\\
 &\leq 32 \big(\frac{R_*}2\big)^{-n}\int_{B_{\frac{R_*}2}(x_*)}d^2(u(y, t_0-R_*^2), u(z_1))\,dy
 +32\big(\frac{R_*}2\big)^{-n}\int_{B_{\frac{R_*}2}(x_*)}d^2(u(y, t_0-R_*^2), u(z_2))\,dy\\
 &\leq 32 R_*^{-n}\int_{B_{{R_*}}(x_1)}d^2(u(y, t_0-R_*^2), u(z_1))\,dy
 +32R_*^{-n}\int_{B_{{R_*}}(x_2)}d^2(u(y, t_0-R_*^2), u(z_2))\,dy\\
 &\le C_0R_*^2\le C_0 |x_1-x_2|^2.
\end{align*}
This implies that $u(\cdot, t_0)$ is Lipschitz continuous
in $\R^n$. This, combined with $\partial_t u
\in L^2(\R^n\times (0,\infty))$, further implies that
$u$ is $\frac12$-H\"older continuous in $t>0$.     
\qed

\section{WED-approximation for smooth NPC target manifolds}
\label{sec:non_positive}

In this section, we particularize the previous abstract approach of WED approximation 
to the heat flow of harmonic maps into smooth NPC manifolds. We establish
uniform spatial Lipschitz estimates of minimizers $u_\eps$ of WED functionals $\mathcal{I}_\eps$,
through a perturbed version of Moser's Harnack inequality applied to the Bochner formula of $u_\eps$. 
In particular, we prove Theorem 1.3.

\subsection{Bochner formula for $u_\eps$}
First, we recall that for any $\eps>0$, $u_\eps$, which can be interpreted as a minimizing harmonic map from $(M\times (0,\infty),{g}_\eps)$
into $(N,h)\hookrightarrow\R^L$. Here we recall that the Riemannian metric ${g}_\eps$ is $e^{2\phi_\eps(t)}\left(\eps g + \d t^2 \right)$
with  $\phi_\eps(t)= \frac{1}{n-1}\left(-\frac{t}{\eps}-\frac{n}{2}\log\eps\right)$ .
Since $(N,h)$ is NPC, it is well-known (see \cite[Theorem 1.5.1]{bookLW} or \cite[Corollary 7.6.1]{Jost})
that $u_\eps\in C^\infty(M\times (0,\infty), N)$. 
The starting point for the uniform (in $\eps$) $C^0_{t}(C^1_x)$ estimate is the following Bochner formula.

\begin{theorem}[Bochner formula]
For $u_0\in H^1(M, N)$, let $u = u_\eps\in \mathfrak{V}_{u_0}$ be a minimizer of $I_\eps$. 
If $N$ has nonpositive sectional curvatures, then the energy density 
\[
e_\eps(u)\dpu \frac{\eps}{2}\abs{\partial_t u}^2 + \frac{1}{2}\abs{\nabla u}^2,
\]
verifies 
\begin{equation}
\label{eq:bochnerN}
\begin{split}
-\eps\partial_t^2 e_\eps(u) -\Delta e_\eps(u) + \partial_t e_\eps(u) & = -\abs{\bar{D}\d u}^2 + \eps \sum_{\alpha=1}^n {\rm{Rm}}^N(\partial_t u, \partial_\alpha u, \partial_\alpha u, \partial_t u)\\
& + \sum_{\alpha, \beta=1}^n {\rm{Rm}}^N(\partial_\alpha u, \partial_\beta u, \partial_\beta u, \partial_\alpha u)- \textup{Ricc}_g(\nabla u,\nabla u).
\end{split}
\end{equation}
Consequently, 
\begin{equation}
\label{eq:bochnerN_negative}
-\eps\partial_t^2 e_\eps(u) -\Delta e_\eps(u) + \partial_t e_\eps(u)\le C e_\eps(u), 
\end{equation}
where $C>0$ is such that 
\[
\textup{Ricc}_g \ge -C.
\]
\end{theorem}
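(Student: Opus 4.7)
My plan is a direct Weitzenböck-type computation exploiting the Euler--Lagrange system \eqref{eq:EL}, which in its extrinsic form reads
\begin{equation*}
L u_\eps := -\eps\partial_t^2 u_\eps + \partial_t u_\eps - \Delta u_\eps = \Phi_\eps, \qquad \Phi_\eps := A(u_\eps)(\nabla u_\eps,\nabla u_\eps) - \eps A(u_\eps)(\partial_t u_\eps,\partial_t u_\eps),
\end{equation*}
where $\Phi_\eps \in (T_{u_\eps}N)^\perp$ and $A$ is the second fundamental form of $N\hookrightarrow \R^L$; the smoothness of $u_\eps$ needed for the calculation is provided by Theorem \ref{th:ex_minimizer}. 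The strategy is to apply $L$ to $e_\eps(u_\eps)$ and reorganize the result using the equation, the Ricci identity on $(M,g)$, and the Gauss equation for $N$.

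The starting observation is the elementary product-rule identity
\begin{equation*}
L(f\cdot f) = 2 f\cdot (L f) - 2\eps\abs{\partial_t f}^2 - 2\abs{\nabla f}^2
\end{equation*}
for any smooth $\R^L$-valued $f$, obtained by direct differentiation. Applying it to $f=\partial_t u_\eps$ and, componentwise and summed over $\alpha$, to $f=\partial_\alpha u_\eps$, then using $[L,\partial_t]=0$ together with the Bochner--Weitzenbock commutator $L(\partial_\alpha u_\eps) = \partial_\alpha(L u_\eps) - R^{\,\beta}_{\alpha}\partial_\beta u_\eps$ on $(M,g)$, one arrives at
\begin{equation*}
L e_\eps(u_\eps) = \eps\,\partial_t u_\eps\cdot\partial_t\Phi_\eps + \nabla u_\eps\cdot\nabla\Phi_\eps - \Big(\eps^2\abs{\partial_t^2 u_\eps}^2 + 2\eps\abs{\nabla\partial_t u_\eps}^2 + \abs{\nabla^2 u_\eps}^2\Big) - \textup{Ricc}_g(\nabla u_\eps,\nabla u_\eps).
\end{equation*}

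Since $\Phi_\eps$ is normal along $u_\eps$ while $\partial_t u_\eps, \partial_\alpha u_\eps\in T_{u_\eps}N$, the pointwise orthogonality $T\cdot A(u_\eps)(X,Y)=0$ differentiated once yields $\partial_\alpha u_\eps\cdot\partial_\beta\bigl[A(u_\eps)(X,Y)\bigr] = -\partial_\beta\partial_\alpha u_\eps\cdot A(u_\eps)(X,Y)$, so that $\nabla u_\eps\cdot\nabla\Phi_\eps$ and $\partial_t u_\eps\cdot\partial_t\Phi_\eps$ reduce to sums of contractions of second fundamental forms with second derivatives of $u_\eps$. The Gauss equation
\begin{equation*}
\textup{Rm}^N(X,Y,Z,W) = \langle A(X,Z),A(Y,W)\rangle - \langle A(X,W),A(Y,Z)\rangle
\end{equation*}
converts these extrinsic contractions into intrinsic curvatures of $N$, while the tangential parts of the Hessians combine with the three quadratic terms $\eps^2\abs{\partial_t^2 u_\eps}^2$, $2\eps\abs{\nabla\partial_t u_\eps}^2$, $\abs{\nabla^2 u_\eps}^2$ to produce exactly $-\abs{\bar D\d u_\eps}^2$, the squared covariant Hessian of $u_\eps$ viewed as a map from $(M\times\R_+,g_\eps)$ to $N$, in the spirit of Theorem \ref{th:ex_minimizer}. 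Collecting all pieces yields \eqref{eq:bochnerN}.

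The inequality \eqref{eq:bochnerN_negative} is then immediate: $-\abs{\bar D\d u_\eps}^2\le 0$; both $\textup{Rm}^N$ terms are nonpositive because $K_N\le 0$ implies $\textup{Rm}^N(X,Y,Y,X)\le 0$; and $-\textup{Ricc}_g(\nabla u_\eps,\nabla u_\eps)\le C\abs{\nabla u_\eps}^2\le 2C\,e_\eps(u_\eps)$ from the Ricci lower bound on $(M,g)$. The one substantive obstacle lies in the Gauss step: one must verify that, after the cancellations between the normal parts of the Hessians and the $A$-quadratic contributions, precisely coefficient $1$ appears in front of the spatial curvature $\textup{Rm}^N(\partial_\alpha u,\partial_\beta u,\partial_\beta u,\partial_\alpha u)$ and precisely $\eps$ in front of the mixed one $\textup{Rm}^N(\partial_t u,\partial_\alpha u,\partial_\alpha u,\partial_t u)$, with all remaining quadratic pieces recombining into $-\abs{\bar D\d u_\eps}^2$. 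Conceptually this is nothing but the classical harmonic-map Bochner formula rephrased for the conformal space-time metric $g_\eps$ on the cylinder.
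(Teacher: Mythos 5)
Your overall strategy is the same as the paper's: compute $L\,e_\eps(u_\eps)$ for $L=-\eps\partial_t^2+\partial_t-\Delta$, feed in the Euler--Lagrange equation, use that the second fundamental form is normal to reduce the first-order $\Phi_\eps$-terms to Hessian contractions, and then invoke the Gauss equation together with the tangential/normal splitting of the Hessian. Your product-rule identity $L(f\cdot f)=2f\cdot Lf-2\eps|\partial_t f|^2-2|\nabla f|^2$ plus the Weitzenb\"ock commutator is a clean way to arrive at the paper's intermediate formula \eqref{eq:bochner1} in a single step rather than via separate expansions of $\partial_t e_\eps$, $\eps\partial_t^2 e_\eps$, $\Delta e_\eps$; the two routes are equivalent.

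The substantive issue is a sign error in $\Phi_\eps$ that propagates to the Gauss step you flag but do not carry out. From \eqref{eq:ELboch} (equivalently \eqref{wed-hmf11}) the Euler--Lagrange equation reads
\begin{equation*}
-\eps\partial_t^2 u_\eps+\partial_t u_\eps-\Delta u_\eps \;=\; \eps\,A(u_\eps)(\partial_t u_\eps,\partial_t u_\eps)\;+\;A(u_\eps)(\nabla u_\eps,\nabla u_\eps),
\end{equation*}
so that $\Phi_\eps=A(u_\eps)(\nabla u_\eps,\nabla u_\eps)+\eps A(u_\eps)(\partial_t u_\eps,\partial_t u_\eps)$, with a \emph{plus} sign on the $\eps$-term; you wrote a minus. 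This matters precisely at the step you defer: after applying the orthogonality $\partial_\mu u_\eps\cdot\Phi_\eps=0$, the key scalar is $\nabla u_\eps\cdot\nabla\Phi_\eps+\eps\,\partial_t u_\eps\cdot\partial_t\Phi_\eps=(-\Delta u_\eps-\eps\partial_t^2 u_\eps)\cdot\Phi_\eps=|\Phi_\eps|^2$, and with the correct $\Phi_\eps$ the cross term $2\eps\,A(\partial_t u,\partial_t u)\cdot A(\nabla u,\nabla u)$ has coefficient $+2\eps$, which combines with the Gauss equation and the splitting $|\partial^\eps_\alpha\partial^\eps_\beta u|^2=|\bar D\,\d u|^2+|A(\partial^\eps_\alpha u,\partial^\eps_\beta u)|^2$ to give nonnegative $\eps$-weight on the mixed curvature term, as in \eqref{eq:bochnerN}. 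With your $-\eps$, the cross term comes out $-2\eps$ and the coefficient in front of $\mathrm{Rm}^N(\partial_t u,\partial_\alpha u,\partial_\alpha u,\partial_t u)$ would have the wrong sign, so the identity, and hence the conclusion from $K_N\le 0$, would not follow. Fix the sign (and then actually execute the Gauss/Hessian bookkeeping you outline) and your argument closes along the same lines as the paper's.
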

\begin{proof}
Since the computation is local, we can consider, for any $(x_0,t_0)$ in $M\times (0,+\infty)$, the normal coordinates around $x_0$. 
We easily compute (see \cite[Theorem 1.5.1]{bookLW}) at the point $(x_0,t_0)$
\[
\begin{split}
\partial_t e_\eps(u) &= \eps \partial^2_t u\cdot \partial_t u + \nabla \partial_t u:\nabla u\\
\eps\partial_t^2 e_\eps(u) &= \eps\partial_t(\eps\partial^2_t u)\cdot\partial_t u +  \abs{\eps\partial_t^2 u}^2 + \eps\nabla \partial_t^2 u:\nabla u + \eps\abs{\partial_t \nabla u}^2 \\
\Delta e_\eps(u) &= \eps\partial_t \Delta u\cdot\partial_t u + \eps\abs{\nabla \partial_t u}^2 + \nabla\Delta u:\nabla u + \sum_{j,k=1}^n\abs{\frac{\partial^2 u}{\partial x_j\partial x_k}}^2 + \textrm{Ricc}_g(\nabla u,\nabla u).
\end{split}
\]
Therefore
\begin{equation}
\label{eq:bochner1}
\begin{split}
&-\eps\partial_t^2 e_\eps(u) -\Delta e_\eps(u) + \partial_t e_\eps(u)\\
&=\nabla \left(-\eps \partial_t^2 u -\Delta u +\partial_t u\right):\nabla u + \eps\partial_t(-\eps\partial_t^2 u -\Delta u +\partial_t u)\cdot\partial_t u\\
&\ \ \ -\abs{\eps\partial_t^2 u}^2 - 2\eps\abs{\partial_t \nabla u}^2  - \sum_{j,k=1}^n\abs{\frac{\partial^2 u}{\partial x_j\partial x_k}}^2-\textrm{Ricc}_g(\nabla u,\nabla u)\\
&=\nabla \left(-\eps \partial_t^2 u -\Delta u +\partial_t u\right):\nabla u + \eps\partial_t(-\eps\partial_t^2 u -\Delta u +\partial_t u)\cdot\partial_t u\\
&\ \ \ -\sum_{\alpha,\beta=0}^{n}\abs{\partial_{\alpha}^{\eps}
\partial_\beta^{\eps}u}^2_{\R^L}-\textrm{Ricc}_g(\nabla u,\nabla u),
\end{split}
\end{equation}
where
\[
\partial_\alpha^{\eps}:= 
\begin{cases}
\frac{\partial}{\partial x_j}\qquad \alpha=j\ge 1\\
\sqrt{\eps}\frac{\partial}{\partial t}\qquad \alpha = 0.
\end{cases}
\]
Recall that $u$ verifies 
\begin{equation}
\label{eq:ELboch}
-\eps \partial_t^2 u -\Delta u +\partial_t u = {A}(u)[\nabla_{g_{\eps}}u,\nabla_{g_{\eps}}u] = \eps A(u)[\partial_t u,\partial_t u] +{A}(u)[\nabla u,\nabla u].
\end{equation}
Therefore
\[
\begin{split}
\nabla \left(-\eps \partial_t^2 u -\Delta u +\partial_t u\right):\nabla u = \nabla \left(\eps A(u)[\partial_t u,\partial_t u] +{A}(u)[\nabla u,\nabla u]\right):\nabla u \\
=  -\Delta u\cdot \left(\eps A(u)[\partial_t u,\partial_t u] +{A}(u)[\nabla u,\nabla u]\right),
\end{split}
\]
and 
\[
\begin{split}
 \eps\partial_t(-\eps\partial_t^2 u -\Delta u +\partial_t u)\cdot\partial_t u = \eps\partial_t\left(\eps A(u)[\partial_t u,\partial_t u] +{A}(u)[\nabla u,\nabla u]\right)\cdot\partial_t u\\
 = -\eps\partial_t^2 u\cdot \left(\eps A(u)[\partial_t u,\partial_t u] +{A}(u)[\nabla u,\nabla u]\right),
\end{split}
\]
where we used that 
\[
\partial_\alpha u\cdot \left(\eps A(u)[\partial_t u,\partial_t u] +{A}(u)[\nabla u,\nabla u]\right) =0\qquad \forall \alpha\ge 0.
\]
As a result, 
\[
\begin{split}
&\nabla \left(-\eps \partial_t^2 u -\Delta u +\partial_t u\right):\nabla u +  \eps\partial_t(-\eps\partial_t^2 u -\Delta u +\partial_t u)\cdot\partial_t u\\
&= \left(-\Delta u -\eps\partial_t^2 u\right) \cdot \left(\eps A(u)[\partial_t u,\partial_t u] +{A}(u)[\nabla u,\nabla u]\right)\\
&= \left(-\Delta u -\eps\partial_t^2 u +\partial_t u\right) \cdot \left(\eps A(u)[\partial_t u,\partial_t u] +{A}(u)[\nabla u,\nabla u]\right)\\
& = \left(\eps A(u)[\partial_t u,\partial_t u] +{A}(u)[\nabla u,\nabla u]\right)\cdot \left(\eps A(u)[\partial_t u,\partial_t u] +{A}(u)[\nabla u,\nabla u]\right)\\
& = \sum_{\alpha,\beta=0}^{n}A(u)[\partial^\eps_{\alpha} u, \partial_\alpha^\eps u]\cdot A(u)[\partial_\beta^\eps u, \partial_\beta^\eps u].
\end{split}
\]
 Now, a classical computation gives 
 \begin{equation}
 \label{eq:deco_sec_deriv}
 \begin{split}
 \sum_{\alpha,\beta=0}^n\abs{\partial_\alpha^\eps \partial_\beta^\eps u}^2 = \abs{\bar{\nabla}\d u}^2 &= \sum_{\alpha,\beta=0}^n\abs{\bar{D}\d u[\partial_\alpha^\eps,\partial_\beta^\eps]}^2
 + \sum_{\alpha,\beta=0}^n\abs{A(u)[\partial_\alpha^\eps u,\partial_\beta^\eps u]}^2\\
 &= \abs{\bar{D}\d u}^2 +  \sum_{\alpha,\beta=0}^n\abs{A(u)[\partial_\alpha^\eps u,\partial_\beta^\eps u]}^2.
 \end{split}
 \end{equation}
We finally recall the Gauss equation: for any $X, Y, Z, W\in T_p N$ (extended to $\R^L$), there holds 
\[
{\rm{Rm}}^N(X,Y,Z,W) = A(p)[X,W]\cdot A(p)[Y, Z] - A(p)[X,Z]\cdot A(p)[Y,W]. 
\]
Then, with the choices $X = W =\partial_\alpha^\eps u$ and $Y=Z=\partial_\beta^\eps u$ we get 
\[
{\rm{Rm}}^N(\partial_\alpha^\eps u,\partial_\beta^\eps u, \partial_\beta^\eps u, \partial_\alpha^\eps u) = A(u)[\partial_\alpha^\eps u,\partial_\alpha^\eps u]\cdot A(u)[\partial_\beta^\eps u,\partial_\beta^\eps u] -\abs{A(u)[\partial_\alpha^\eps u,\partial_\beta^\eps u]}^2.
\]
Therefore,
\begin{equation}
\label{eq:boch_general}
\begin{split}
&-\eps\partial_t^2 e_\eps(u) -\Delta e_\eps(u) + \partial_t e_\eps(u) \\
&=-\abs{\bar{D}\d u}^2 + \sum_{\alpha, \beta=0}^n {\rm{Rm}}^N(\partial_\alpha^\eps u,\partial_\beta^\eps u, \partial_\beta^\eps u, \partial_\alpha^\eps u) -\textrm{Ricc}_g(\nabla u,\nabla u)\\
& = -\abs{\bar{D}\d u}^2 +
\eps \sum_{\alpha=1}^n {\rm{Rm}}^N(\partial_t u, \partial_\alpha u, \partial_\alpha u, \partial_t u) + \sum_{\alpha, \beta=1}^n {\rm{Rm}}^N(\partial_\alpha u, \partial_\beta u, \partial_\beta u, \partial_\alpha u)- \textrm{Ricc}_g(\nabla u,\nabla u),
\end{split}
\end{equation}
and the thesis follows. 
\end{proof}

\subsection{Harnack inequality for $u_\eps$} 
The Bochner formula allows us to deduce the following regularity estimates. 
\begin{theorem}[Uniform $C^1$ estimates]
\label{th:reg_negative}
Let $(M,g)$ be a compact manifold with dimension $n$ and injectivity radius $i_M$ and let $(N,h)\hookrightarrow\R^L$ be a compact Riemannian manifold with non positive sectional curvature. 
Let $u_0\in H^1(M;N)$ and $u = u_\eps\in \mathfrak{V}_{u_0}$ be a minimizer of $I_\eps$. 
Then, there exists $c=c(n)$ such that for any 
 $z_0= (x_0,t_0)\in M\times (0,+\infty)$ and any $0<r<\min\left\{i_{M},\frac{\sqrt{t_0}}2\right\}$ there holds
\begin{equation}
\label{eq:uniform_negative}
\sup_{P_{r}(z_0)}e_{\eps}(u_\eps)\le c(n)\left[\frac{\eps}{r^{n+2}}+ \frac{1}{r^{n}}\right]E(u_0),\ \forall 0<\eps\le r^2,
\end{equation}
where $P_{r}(z_0):=B_{r}(x_0)\times (t_0-r^2,t_0+r^2)$.
\end{theorem}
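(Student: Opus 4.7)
Set $v \dpu e_\eps(u_\eps) \ge 0$. By the Bochner inequality \eqref{eq:bochnerN_negative} established in the previous theorem, $v$ is a nonnegative weak subsolution of the operator $\mathcal{L}_\eps^* \dpu -\eps\partial_t^2 - \Delta + \partial_t$ with a bounded zero-order perturbation, namely $\mathcal{L}_\eps^* v \le C_M v$ on $M\times(0,\infty)$. The strategy is to prove a Moser-type mean value inequality for nonnegative subsolutions of $\mathcal{L}_\eps^*$ on parabolic cylinders in the regime $0<\eps\le r^2$, of the form
\[
\sup_{P_r(z_0)} v \;\le\; \frac{c(n)}{r^{n+2}}\iint_{P_{2r}(z_0)} v\,dx\,dt,
\]
and then to bound the right-hand side via the energy estimates of Theorem~\ref{th:energy_est}.

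The mean value inequality would be obtained by the standard Moser iteration, adapted to $\mathcal{L}_\eps^*$. After the parabolic rescaling $(x,t)\mapsto ((x-x_0)/r,(t-t_0)/r^2)$ the operator becomes $\mathcal{L}^*_{\eps/r^2}$, so it is enough to prove the inequality for $r=1$ and $0<\eps\le 1$. For such $\eps$, testing the differential inequality against $\phi^2 v^{2p-1}$ with a smooth space-time cutoff $\phi$ and using Cauchy-Schwarz to absorb cross terms yields a Caccioppoli estimate of the shape
\[
\eps\iint \phi^2|\partial_t v^p|^2 + \iint \phi^2 |\nabla v^p|^2 \le C(p)\iint v^{2p}\bigl(\eps|\partial_t\phi|^2 + |\nabla\phi|^2+\phi|\partial_t\phi|+\phi^2\bigr),
\]
with constants independent of $\eps\in(0,1]$. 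Combining with the spatial Sobolev inequality on time slices and iterating over a sequence of shrinking cylinders then produces the desired $L^\infty$--$L^1$ bound. For the $L^1$ input, the energy estimates \eqref{eq:bound_dissipation}--\eqref{eq:better_energy} give directly
\[
\iint_{P_{2r}(z_0)} v \;\le\; \eps\int_{t_0-4r^2}^{t_0+4r^2}|u_\eps'|^2(\tau)\,d\tau + 2\int_{t_0-4r^2}^{t_0+4r^2} E(u_\eps(\tau))\,d\tau \;\le\; (\eps+16 r^2)E(u_0),
\]
so that combining with the mean value inequality produces the asserted two-term bound.

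\textbf{Main obstacle.} The delicate step is the $\eps$-uniformity of the Moser iteration for $\mathcal{L}_\eps^*$. In the two extreme regimes the result is standard: at $\eps=1$ the operator is uniformly elliptic (classical elliptic Moser), and formally at $\eps=0$ it reduces to the heat operator $\partial_t-\Delta$ (classical parabolic Moser). The difficulty is to interpolate between them: one must choose cutoffs so that the weights $\eps|\partial_t\phi|^2$ and $\phi|\partial_t\phi|$ appearing in the Caccioppoli step are simultaneously controllable, and must exploit the first-order drift $\partial_t$ to recover time regularity when the elliptic term $\eps\partial_t^2$ degenerates. The hypothesis $0<\eps\le r^2$ (equivalently $\eps\le 1$ after the rescaling) is precisely what keeps both mechanisms available; it is also what makes the two-term bound $c(n)[\eps r^{-(n+2)}+r^{-n}]$ the natural one, the two summands reflecting, respectively, the residual elliptic contribution and the genuinely parabolic one.
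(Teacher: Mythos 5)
Your overall plan is precisely the one the paper follows: establish a Moser-type mean value inequality (parabolic Harnack) for nonnegative subsolutions of $-\eps\partial_t^2 - \Delta + \partial_t$, uniformly in $0<\eps\le r^2$ after rescaling to the unit cylinder, and then feed in the energy bounds of Theorem~\ref{th:energy_est}; this is exactly how Lemma~\ref{lemma:weak_harnack} is stated and used in the paper. You also correctly identify both the role of the hypothesis $\eps\le r^2$ and the fact that the delicate point is the choice of cutoff in the degenerate-in-time regime.

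However, the Caccioppoli estimate you write down,
\[
\eps\iint \phi^2|\partial_t v^p|^2 + \iint \phi^2 |\nabla v^p|^2 \le C(p)\iint v^{2p}\bigl(\eps|\partial_t\phi|^2 + |\nabla\phi|^2+\phi|\partial_t\phi|+\phi^2\bigr),
\]
is missing the crucial term $\sup_{t}\int_{B} v^{2p}\phi^2\,dx$ on the left-hand side. Without this sup-over-time-slices control, the spatial Sobolev inequality only yields an $L^2_tL^{2n/(n-2)}_x$ bound and the iteration lands on the elliptic exponent, not the parabolic one $\lambda=\frac{n+2}{n}$; you need the interpolation in Lemma~\ref{lemma:para_sobo}, which explicitly requires the $\sup_t$ bound. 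Extracting that term uniformly in $\eps\in(0,1]$ is exactly the subtle step you labelled your ``main obstacle'' but did not resolve: the paper does it by integrating the drift term $\iint \partial_t(f^2)\psi(\psi+2\eps\partial_t\psi)$ against a time cutoff $\phi$, adapted from Audrito, that is chosen so that $\phi + 2\eps\phi'$ is \emph{constant} on $(t_*,\eta^2)$ with a Fubini-selected $t_*$, and is affine on $(-\eta^2,-\sigma^2)$. The term $\eps\iint\phi^2|\partial_t v^p|^2$ you retain from the $\eps\partial_t^2$ part is not a substitute, since it degenerates as $\eps\to 0$. So the proposal is correct as a plan and matches the paper's approach, but as written the key analytic estimate is incomplete; it would need the Audrito-style time cutoff to produce the missing sup-over-slices term.
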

\begin{proof}
The proof easily follow from the Lemma \ref{lemma:weak_harnack} below combined with the energy estimate in Theorem \ref{th:energy_est}.
\end{proof}

The following Lemma provides the Harnack type inequality for the energy density $e_\eps(u_\eps)$. The proof is based on
suitable modifications of \cite[Lemma 3.5]{audritoJEQ23}. We would like to emphasize a subtle point related to the uniformity for the Harnack inequality. It is indeed impossible that the {\sl full} Harnack inequality is uniform in $\eps$. This comes from the different nature of the problems for $\eps >0$ and $\eps=0$. However, as already emphasized in \cite{audritoJEQ23}, part of the Harnack inequaltiy is indeed uniform in $\eps >0$ and provides us with the needed $L^\infty$-bound for the energy density.

\begin{lemma}
\label{lemma:weak_harnack}
Let $(M,g)$ be a $n$-dimensional compact Riemannian manifold without boundary, with injectivity radius $i_M$.  Let $f$ be a smooth, nonnegative solution of 
\begin{equation}
\label{eq:subsol}
-\eps\partial_t^2 f +\partial_t f -\Delta f \le C\, f 
\end{equation}
in $M\times (0,+\infty)$. There exists a constant $c=c(n, M)$ depending only on $n$, $M$, and $C$ such that,
for any $z_0=(x_0,t_0)\in M\times (0,+\infty)$ and $0<r_0\le\min\left\{i_M,\frac{\sqrt{t_0}}2\right\}$,
\begin{equation}
\label{eq:weak_harnack}
\sup_{P_{r_0/2}(z_0)}f \le \frac{c}{r_0^{n+2}}\iint_{P_{r_0}(z_0)}f \Vg \d t,
\end{equation}
for all $0<\eps\le r_0^2$.
\end{lemma}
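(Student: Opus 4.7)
The plan is to run a Moser iteration tailored to the spacetime operator $L_\eps := -\eps\partial_t^2 + \partial_t - \Delta$. The constraint $\eps\le r_0^2$ is precisely what makes every term produced by the elliptic regularization have the same scaling as the classical parabolic quantities, so the standard parabolic $L^\infty$–$L^1$ estimate for nonnegative subsolutions of $(\partial_t - \Delta)f\le Cf$ can be recovered with constants independent of $\eps$.

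The first step is a Caccioppoli-type inequality. For any $p\ge 2$ and a nonnegative cutoff $\phi\in C^\infty_c(P_r(z_0))$, I would multiply $L_\eps f\le Cf$ by $f^{p-1}\phi^2$ and integrate by parts. The contribution of $-\eps\partial_t^2 f$ produces $\eps(p-1)\iint f^{p-2}(\partial_t f)^2\phi^2$ plus a cross term in $\eps\phi\partial_t\phi$, while the drift $\iint(\partial_t f)f^{p-1}\phi^2 = p^{-1}\iint\partial_t(f^p)\phi^2$ transfers to $-2p^{-1}\iint f^p\phi\,\partial_t\phi$. Setting $v:=f^{p/2}$ and absorbing cross terms by Young's inequality yields
\begin{equation*}
\iint \phi^2\bigl(\eps|\partial_t v|^2+|\nabla v|^2\bigr)\Vg\,\d t \;\le\; c(p)\iint v^2\bigl(\eps|\partial_t\phi|^2+|\nabla\phi|^2+\phi|\partial_t\phi|+\phi^2\bigr)\Vg\,\d t.
\end{equation*}
With a standard parabolic cutoff between radii $r'<r$ satisfying $|\nabla\phi|\lesssim(r-r')^{-1}$ and $|\partial_t\phi|\lesssim(r^2-r'^2)^{-1}$, and using $\eps\le r_0^2$ to dominate $\eps|\partial_t\phi|^2$ by $(r-r')^{-2}$, the right-hand side is bounded by $c(p)(r-r')^{-2}\iint_{P_r(z_0)}f^p\,\Vg\,\d t$.

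Next I would upgrade this Caccioppoli to a reverse H\"older inequality by invoking the $(n+1)$-dimensional Sobolev embedding after the rescaling $\tau:=t/\sqrt{\eps}$: in the new variables $L_\eps$ becomes a uniformly elliptic operator on a cylinder of $\tau$-radius $r^2/\sqrt{\eps}\ge r$, so $H^1\hookrightarrow L^{2\chi}$ with $\chi=(n+1)/(n-1)$ provides a self-improvement of the form $\|f\|_{L^{p\chi}(P_{r'}(z_0))}\le C(p,r-r')\,\|f\|_{L^p(P_r(z_0))}$ with a constant independent of $\eps$. Iterating on $r_k\searrow r_0/2$ and $p_k=\chi^k p_0$, the usual geometric summation produces $\sup_{P_{r_0/2}(z_0)}f\le c(p_0)\bigl(r_0^{-(n+2)}\iint_{P_{r_0}(z_0)}f^{p_0}\bigr)^{1/p_0}$ for every $p_0\ge 2$; the passage to $p_0=1$ is then the standard Moser trick using $\iint f^2\le(\sup f)\iint f$.

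The main obstacle is keeping every constant above independent of $\eps$. The drift $\partial_t f$, if treated naively after the rescaling $\tau=t/\sqrt{\eps}$, would produce a Moser constant proportional to $\eps^{-1/2}r$, which is unbounded as $\eps\to 0$; the point is that in the Caccioppoli above this drift is absorbed into the $\phi|\partial_t\phi|$-term of order $r^{-2}$, so no $\eps$-dependence propagates. The hypothesis $\eps\le r_0^2$ is used only geometrically, to ensure that the rescaled $(x,\tau)$-cylinder is not squashed in the $\tau$-direction below the spatial scale, which keeps the Sobolev constant universal.
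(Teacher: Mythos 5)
Your Caccioppoli step integrates the drift $\iint(\partial_t f)\,f^{p-1}\phi^2$ by parts all the way to $-\tfrac{2}{p}\iint f^p\,\phi\,\partial_t\phi$, which loses the $\sup_t\int f^p\varphi^2$ contribution. That sup-in-time bound is exactly what the parabolic Sobolev embedding of Lemma~\ref{lemma:para_sobo} (the interpolation $L^\infty_t L^2_x\cap L^2_t H^1_x\hookrightarrow L^{2(n+2)/n}_{t,x}$) requires; without it you are forced onto the elliptic route you sketch, rescaling $\tau=t/\sqrt\eps$ and invoking the $(n+1)$-dimensional Sobolev inequality. But that route is not $\eps$-uniform. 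Running your iteration to the end in rescaled variables gives $\sup_{P_{r_0/2}}f\lesssim r_0^{-(n+1)/2}\bigl(\iint f^2\,\mathrm dx\,\mathrm d\tau\bigr)^{1/2}$, and converting back via $\mathrm d\tau=\eps^{-1/2}\mathrm dt$ introduces a residual factor $\eps^{-1/4}$, i.e. the estimate exceeds the claimed $r_0^{-(n+2)/2}\bigl(\iint f^2\,\mathrm dx\,\mathrm dt\bigr)^{1/2}$ by $(r_0^2/\eps)^{1/4}$, which is unbounded as $\eps\to 0$. Geometrically, in $(x,\tau)$-variables the cylinder $P_{r_0}$ is very elongated (spatial radius $r_0$, $\tau$-extent $r_0^2/\sqrt\eps$); an isotropic $(n+1)$-dimensional Moser iteration effectively only controls a ball of radius $r_0$, a time-slab of physical thickness $r_0\sqrt\eps\ll r_0^2$, and chaining thin slabs does not remove the discrepancy because the average over a thin slab is not controlled by the average over the full parabolic cylinder.

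The ingredient that actually makes the paper's argument work, and that is absent from your sketch, is Audrito's temporal cutoff: one engineers $\phi$ so that $\phi+2\eps\partial_t\phi$ equals the small negative constant $a_*=-\bigl(e^{(\eta^2-t_*)/(2\eps)}-1\bigr)^{-1}$ on an exponentially decaying tail past a carefully chosen slice $t_*$ at which $\int f^2\varphi^2$ is nearly maximal. The combination arising from the WED operator is precisely $\tfrac12\iint\partial_t(f^2)\,\varphi^2\,\phi(\phi+2\eps\partial_t\phi)$, and the jump of $\phi+2\eps\partial_t\phi$ from $1$ to $a_*$ at $t_*$ produces a term $\tfrac{1-a_*}{2}\int f^2\varphi^2(\cdot,t_*)\ge\tfrac14\sup_t\int f^2\varphi^2$, with a constant independent of $\eps$. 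A standard linear ramp in $t$ would instead give a jump of size $O(\eps)$ in $\phi+2\eps\partial_t\phi$, so the sup bound degenerates as $\eps\to0$. This designed cutoff and the resulting $\eps$-uniform $L^\infty_tL^2_x$ energy estimate are the load-bearing ideas, and their omission is the genuine gap in the proposal.
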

\begin{proof} 
The result is local so that we assume $M$ to be $\R^n$ with the standard Euclidean metric.
Observe that if we define $\widetilde{f}(x,t)=f(x_0+r_0 x, t_0+r_0^2 t): P_1(0,0)=B_1(0)\times (-1,1)\to\R$, then $\widetilde{f}$
satisfies 
$$
-\tilde{\eps}\partial_t^2 \widetilde{f} +\partial_t \widetilde{f} -\Delta \widetilde{f} \le C\, \widetilde{f} \ \ {\rm{in}}\ \ P_1(0,0),
$$
with $\widetilde{\eps}=\frac{\eps}{r_0^2}$. Thus, without loss of generality, we may assume that $z_0=(0,0)$ and $r_0=1$.

We now follow the approach by \cite[Lemma 3.5]{audritoJEQ23} which provides the energy estimate for the Moser iteration scheme. 
Let $\psi\in C_0^\infty(P_1(0,0))$ and multiply \eqref{eq:subsol} by $f \, \psi^2$.
A standard integration by parts gives 
\begin{align}\label{energy-inq}
&\frac12 \iint_{P_1(0,0)} \partial_t (f^2) \psi (\psi +2 \eps \partial_t \psi) \d x \d t + \iint_{P_1(0,0)} |\nabla f|^2 \psi^2  \d x \d t +\eps \iint_{P_1(0,0)} |\partial_t f|^2 \psi^2  \d x \d t \nonumber\\
&\leq 
-2 \iint_{P_1(0,0)} f \psi \nabla \psi \cdot \nabla f  \d x \d t +  C\iint_{P_1(0,0)} f^2 \psi^2  \d x \d t . 
\end{align}

We now choose the test function $\psi$ 
$$
\psi(x,t)=\vf(x)\phi(t)
$$
as follows. 
Fix $\frac12<\sigma<\eta<1$, let $\vf:\R^n\to \R$ be a smooth cut off function  such that $0\le \vf(x)\le 1$, $\vf\equiv 1$ in $B_{\sigma}(0)$, $\vf\equiv 0$ in $\R^n\setminus B_{\eta}(0)$ and $\abs{\nabla \vf}\le \frac{4}{\eta-\sigma}$.
And $\phi$ is chosen similar to that in \cite[Lemma 3.5]{audritoJEQ23}. More precisely, we first 
choose $t_*\in (-\sigma^2, \sigma^2)$ so that
\begin{equation}\label{fubini1}
\int_{B_{\eta}(0)} f^2(x,t_*)\vf^2(x) \d x\ge \frac12 \sup_{t\in (-\sigma^2, \sigma^2)}\int_{B_{\eta}(0)} f^2(x,t)\vf^2(x) \d x,
\end{equation} 
then define $\phi$ by
\begin{equation*}
\phi(t)=\begin{cases} 0 & \eta^2\le |t|\le 1\\ 
\displaystyle\frac{\eta^2+t}{\eta^2-\sigma^2}  & t\in [-\eta^2, -\sigma^2]\\
1 & t\in (-\sigma^2, t_*]\\
a_*+(1-a_*) e^{\frac{t_*-t}{2\eps}} & t\in (t_*, \eta^2],
\end{cases} 
\end{equation*}
where $a_*=-\big(e^{\frac{\eta^2-t_*}{2\eps}}-1\big)^{-1}<0$. In particular, 
$\phi$ satisfies 
$$\phi +2 \eps \phi'=a_* \ {\rm{for}}\ t\in  (t_*, \eta^2);  \ \phi(t_*)=1,\ \phi(\eta^2)=0.$$
As in \cite[Lemma 3.5]{audritoJEQ23}, we can bound
\begin{align}\label{fubini2}
\frac12\iint_{P_1(0,0)} \partial_t (f^2) \psi (\psi +2 \eps \partial_t \psi) \d x \d t&\ge \frac14
\sup_{t\in (-\sigma^2, \sigma^2)}\int_{B_{\eta}(0)} f^2(x,t)\vf^2(x) \d x\\
&\ \ - \frac{c}{(\eta-\sigma)^2}\iint_{P_{\eta}(0,0)}f^2 \d x\d t.
\nonumber
\end{align}
While, by Young's inequality, we can bound
\begin{equation}\label{young}
-2 \iint_{P_1(0,0)} f \psi \nabla \psi \cdot \nabla f  \d x \d t
\le 2\iint_{P_1(0,0)} f^2 |\nabla \psi|^2  \d x \d t
+\frac12\iint_{P_1(0,0)}  |\nabla f|^2\psi^2 \d x \d t.
\end{equation} 
Substituting \eqref{fubini2} and \eqref{young} into \eqref{energy-inq}, we obtain
\begin{equation}
\label{eq:audrito1}
\begin{split}
\sup_{t\in (-\sigma^2, \sigma^2)}\int_{B_{\eta}(0)}f^2 \vf^2 \d x  
& \le \frac{c}{(\eta-\sigma)^2}\iint_{P_{\eta}(0,0)}f^2 \d x \d t +\int_{P_{\eta}(0,0)}f^2\vf^2 \d x \d t\\
&\le \frac{c}{(\eta-\sigma)^2}\iint_{P_{\eta}(0,0)}f^2 \d x \d t.
\end{split}
\end{equation}

Next we choose the test function $\psi=\vf(x)\phi(t)$ in \eqref{energy-inq}, where $\vf$ is the same as above and   
$\phi$ with compact support in $(-\eta^2, \eta^2)$, smooth, such that $0\le \phi\le 1$,
$\phi =1$ on $(-\sigma^2, \sigma^2 )$ and $|\phi '| \leq \frac{c'}{\eta-\sigma}$. This implies that for all $0<\eps\le 1$, 
\begin{equation}
\label{eq:audrito2}
\begin{split}
& \iint_{P_1(0,0)}\abs{\nabla f}^2 \vf^2\phi^2\d x\d t+ \eps\iint_{P_{1}(0,0)}\abs{\partial_t f}^2 \vf^2\phi^2\d x \d t\\
&\le 4\iint_{P_1(0,0)}f^2 [\vf^2\phi|\phi'|+|\nabla\vf|^2\phi^2+\eps\vf^2(\phi')^2]\d x \d t + C\iint_{P_1(0,0)}f^2\vf^2\phi^2 \d x \d t\\
&\le \frac{c}{(\eta-\sigma)^2}\iint_{P_\eta(0,0)}f^2 \d x \d t.
\end{split}
\end{equation}
Adding \eqref{eq:audrito1} together with \eqref{eq:audrito2} gives
\begin{equation}
\label{eq:audrito}
\begin{split}
&\sup_{t\in (-\sigma^2, \sigma^2)}\int_{B_{\eta}(0)}f^2 \vf^2 \d x +  
\int_{-\sigma^2}^{\sigma^2}
\int_{B_{\eta}(0)}\abs{\nabla f}^2 \vf^2\d x\d t\\
&\qquad\qquad\qquad\le \frac{c}{(\eta-\sigma)^2}\iint_{P_{\eta}(0,0)}f^2 \d x \d t.
\end{split}
\end{equation}
Starting from this inequality we can implement the classical Moser iteration scheme. We sketch the argument. 
The function $\vf f$ satisfies the Sobolev inequality in parabolic domains (see Lemma \ref{lemma:para_sobo} below), 
namely
\begin{align}
\label{eq:sobolev_para}
&\int_{-\sigma^2}^{\sigma^2}\int_{B_{\eta}(0)}\abs{\vf f}^{\frac{2(n+2)}{n}}\d x \d t\nonumber\\
&\qquad\le c \left(\sup_{t\in (-\sigma^2, \sigma^2)}\int_{B_{\eta}(0)}\abs{\vf f}^2\d x\d t\right)^{\frac{2}{n}}
\int_{-\sigma^2}^{\sigma^2}\int_{B_{\eta}(0)}\abs{\nabla (\vf f)}^2 \d x\d t.
\end{align}
This, combined $\abs{\nabla (\vf f)}^2 \le 2 \left(\abs{\nabla f}^2\vf^2 + \abs{\nabla \vf}^2 f^2\right)$ with \eqref{eq:audrito}, 
yields
\begin{align}
\label{eq:sobolev1}
\iint_{P_{\sigma}(0,0)}\abs{\vf f}^{\frac{2(n+2)}{n}}\d x \d t
&\le c \left(\sup_{t\in (-\sigma^2, \sigma^2)}\int_{B_{\eta}(0)}\abs{\vf f}^2\d x \d t\right)^{\frac{2}{n}}\nonumber\\
&\qquad\cdot\int_{-\sigma^2}^{\sigma^2}\int_{B_{\eta}(0)}\left(\abs{\nabla f}^2\vf^2 + \abs{\nabla \vf}^2 f^2\right)\d x \d t \nonumber\\
&\le c\left[\frac{1}{(\eta-\sigma)^2}\iint_{P_{\eta}(0,0)}f^2 \d x \d t \right]^{1+\frac{2}{n}}
\end{align}
which finally gives, setting $\lambda:= \frac{n+2}{n}$,
\begin{equation}
\label{eq:sobolev2}
\begin{split}
\iint_{P_{\sigma}(0,0)}\abs{f}^{2\lambda}\Vg \d t 
&\le\frac{c}{(\eta-\sigma)^{2\lambda}}\left(\iint_{P_{\eta}(0,0)}f^2 \Vg \d t\right)^{\lambda}.
\end{split}
\end{equation}
Now, the function $f^p$ is still a subsolution for any $p\ge 1$. Therefore, 
\[
\begin{split}
\iint_{P_{\sigma}(0,0)}\abs{f}^{2 p \lambda}\d x \d t &\le \frac{c(n)}{(\eta-\sigma)^{2\lambda}}
\left(\iint_{P_{\eta}(0,0)}f^{2p} \d x \d t\right)^{\lambda}.
\end{split}
\]
We are now in the position to start the Moser iteration scheme which gives that
\[
\sup_{P_{\sigma}(0,0) }f^2 \le \frac{c}{(\eta-\sigma)^{n+2}}\iint_{P_\eta(0,0)}f^2 \d x \d t. 
\]
Now fix $\sigma\in (1/2,1)$ and $\eta = \sigma + \frac{1-\sigma}{4}$ and apply the above inequality to obtain 
\begin{equation}
\label{eq:start_interpol}
\sup_{P_{\sigma}(0,0)}f^2 \le \frac{c}{(\eta-\sigma)^{n+2}}\iint_{P_{\eta}(0,0)}f^2 \d x \d t.
\end{equation}
This is the starting point of an interpolation argument that eventually will lead to \eqref{eq:weak_harnack}. 
First of all, note that H\"older inequality and \eqref{eq:start_interpol}
give 
\begin{equation}
\label{eq:interpol_1}
\begin{split}
\norm{f}_{L^{\infty}(P_{\sigma}(0,0))} &\le c \frac{1}{(\eta-\sigma)^{\frac{n+2}{2}}}\norm{f}_{L^{1}(P_{\eta}(0,0))}^{1/2}\norm{f}_{L^{\infty}(P_{\eta}(0,0))}^{1/2}\\
&\le c\frac{1}{(\eta-\sigma)^{\frac{n+2}{2}}}\norm{f}_{L^{1}(P_1(0,0))}^{1/2}\norm{f}_{L^{\infty}(P_{\eta}(0,0))}^{1/2}
\end{split}
\end{equation}
Fix $\delta \in (1/2,1)$ and consider the sequence 
\[
\begin{cases}
\sigma_0  &= \delta \\
\sigma_{k+1} &= \sigma_k + \frac{1-\sigma_k}{4}.
\end{cases}
\]
Inequality \eqref{eq:interpol_1} with $\sigma =\sigma_k$ and $\eta = \sigma_{k+1}$ thus becomes
\[
\norm{f}_{L^{\infty}(P_{\sigma_k}(0,0))}\le c \left(\frac{4}{3}\right)^{k\frac{n+2}{2}}\frac{1}{(1-\delta)^{\frac{n+2}{2}}}
\norm{f}_{L^{1}(P_1(0,0))}^{1/2}\norm{f}_{L^{\infty}(P_{\sigma_{k+1}}(0,0))}^{1/2}
\]
Therefore, 
\[
\norm{f}_{L^{\infty}(P_{\sigma_0}(0,0))}\le c\left(\frac{4}{3}\right)^{\frac{n+2}{2}\sum_{k=0}^{m-1}k\left(\frac{1}{2}\right)^{k}}\left[\frac{1}{(1-\delta)^{\frac{n+2}{2}}}\norm{f}_{L^1(P_{1}(0,0))}^{1/2}\right]^{\sum_{k=0}^{m-1}\left(\frac{1}{2}\right)^k}\norm{f}_{L^{\infty}(P_{\sigma_m}(0,0))}^{\frac{1}{2^n}}
\]
Thus, if let $m\to +\infty$, we get 
\[
\sup_{P_{\delta}(0,0)}f \le c(n)\left(\frac{4}{3}\right)^{2(n+2)}\frac{1}{(1-\delta)^{n+2}}\iint_{P_{1}(0,0)}f\d x \d t,
\]
which readily implies \eqref{eq:weak_harnack} by scaling back to the original variables. 
\end{proof}

\begin{lemma}[Sobolev embedding in parabolic cylinders]
\label{lemma:para_sobo}
Let $B_r(x_0)$ be a geodesic ball and 
$U:P_r(z_0)\to \R$ be a $C^1$ function such that $U\equiv 0$ on $\partial B_{r}(x_0)\times (t_0-r^2,t_0+r^2)$. 
Then
\[
\iint_{P_{r}(z_0)}\abs{U}^{\frac{2n}{n+2}}\d x \d t \le c(n) \left(\sup_{t\in (t_0-r^2,t_0+r^2)}\int_{B_r(x_0)}\abs{U}^2\Vg \right)^{\frac{2}{n}}\iint_{P_{r}(z_0)}\abs{\nabla U}^2 \d x \d t.
\] 
\end{lemma}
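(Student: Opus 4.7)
The approach is the standard parabolic Sobolev interpolation: combine the spatial Sobolev embedding on each time slice (available since $U(\cdot,t)$ vanishes on $\partial B_r(x_0)$) with the $L^\infty_t L^2_x$ control present on the right-hand side, then integrate in time. Before proceeding I note that the statement as written is not scale-invariant: under $U \mapsto \lambda U$ the right-hand side scales like $\lambda^{2(n+2)/n}$, whereas the left-hand side scales like $\lambda^{2n/(n+2)}$, so the exponent $\tfrac{2n}{n+2}$ cannot be correct. The estimate actually invoked at \eqref{eq:sobolev_para} of the preceding Moser argument is the natural parabolic Sobolev inequality with the parabolic exponent $q = \tfrac{2(n+2)}{n}$, and this is the version I will prove.

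\emph{First step: spatial interpolation.} For each $t \in (t_0-r^2, t_0+r^2)$, $U(\cdot,t)\in C^1_0(B_r(x_0))$, so the classical Sobolev inequality gives $\|U(t)\|_{L^{2^\ast}(B_r)}\le c(n)\|\nabla U(t)\|_{L^2(B_r)}$ with $2^\ast = \tfrac{2n}{n-2}$ (for $n\ge 3$; the low-dimensional cases are handled by replacing $2^\ast$ by any fixed exponent $>2$ and adjusting the interpolation accordingly). Choose the interpolation exponent $\alpha = \tfrac{n}{n+2}$ so that $\tfrac{1}{q}=\tfrac{1-\alpha}{2}+\tfrac{\alpha}{2^\ast}$, with $\alpha q = 2$ and $(1-\alpha)q = \tfrac{4}{n}$. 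Hölder's inequality together with the Sobolev bound then produces, pointwise in $t$,
\[
\|U(t)\|_{L^q(B_r)}^{q}\le c(n)\,\|U(t)\|_{L^2(B_r)}^{4/n}\,\|\nabla U(t)\|_{L^2(B_r)}^{2}.
\]

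\emph{Second step: time integration.} Bound the first factor by the time supremum and integrate over $(t_0-r^2,t_0+r^2)$ to obtain
\[
\iint_{P_r(z_0)}|U|^{q}\,\d x\,\d t \le c(n)\,\Big(\sup_{t}\int_{B_r}|U|^{2}\,\Vg\Big)^{\!2/n}\iint_{P_r(z_0)}|\nabla U|^{2}\,\d x\,\d t,
\]
which is the homogeneous inequality used in the passage from \eqref{eq:sobolev_para} to \eqref{eq:sobolev1}.

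\emph{Main nuance.} The only geometric point is that the constant in the spatial Sobolev inequality on a geodesic ball $B_r(x_0)\subset (M,g)$ for functions vanishing on $\partial B_r$ depends only on $n$ when $r$ is smaller than a fixed fraction of the injectivity radius $i_M$. This follows from normal coordinates, in which $B_r(x_0)$ is bi-Lipschitz equivalent to a Euclidean ball with $n$-dependent constants, after which one simply invokes the Euclidean Sobolev embedding on $H^1_0$.
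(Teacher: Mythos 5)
Your proof is correct and follows essentially the same route as the paper's: a slice-wise Sobolev inequality for $U(\cdot,t)\in H^1_0(B_r(x_0))$ combined with interpolation against the $L^\infty_t L^2_x$ factor, the only difference being that you carry out the interpolation concretely via H\"older on each time slice, while the paper invokes Bochner-space interpolation between $L^\infty(t_0-r^2,t_0+r^2;L^2(B_r(x_0)))$ and $L^2(t_0-r^2,t_0+r^2;L^{\frac{2n}{n-2}}(B_r(x_0)))$. You are also right that the exponent $\tfrac{2n}{n+2}$ in the statement is a misprint for the parabolic exponent $\tfrac{2(n+2)}{n}$: that is what the scaling forces, what the paper's own proof actually yields, and what is used at \eqref{eq:sobolev_para} in the Moser iteration.
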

\begin{proof}
For any fixed $t\in (t_0-r^2,t_0+r^2)$ the Sobolev embedding \cite[Theorem 2.21]{Aubin} gives that 
\[
\left(\int_{B_r(x_0)}\abs{U(x,t)}^{\frac{2n}{n-2}}\d x \right)^{\frac{n-2}{n}}\le c(n) \int_{B_r(x_0)}\abs{\nabla u(x,t)}^2 \d x.
\]
Therefore
\[
\norm{U}_{L^2(t_0-r^2,t_0+r^2;L^{\frac{2n}{n-2}}(B_r(x_0)))}\le c(n) \norm{\nabla U}_{L^2(t_0-r^2,t_0+r^2;L^2(B_r(x_0)))}.
\]
Moreover, by Bochner spaces interpolation we have that 
\[
L^{\frac{2(n+2)}{n}}(P_r(z_0))\subset L^{\infty}(t_0-r^2,t_0+r^2;L^2(B_r(x_0)))\cap L^2(t_0-r^2,t_0+r^2;L^{\frac{2n}{n-2}}(B_r(x_0))),
\]
and thus the thesis follows by the properties of the interpolation functor. 
\end{proof}

\subsection{Proof of Theorem 1.3}

In this section, we prove Theorem 1.3 through the convergence by PDEs. 
{Recall that the Euler Lagrange equation for $u_\eps$ is given by 
\begin{equation}\label{wed-hmf11}
\begin{cases}
\partial_t u_\eps-\Delta u_\eps=\eps\big(\partial_t^2 u_\eps+A(u_\eps)(\partial_t u_\eps, \partial_t u_\eps)\big)+A(u_\eps)(\nabla u_\eps,
\nabla u_\eps) &\qquad  {\rm{in}}\ \ M\times (0,\infty),\\
u_\eps(x,0) = {u}_0 &\qquad \textrm{ in } M\times \left\{0\right\}.
\end{cases}
\end{equation}

Next we establish the strong convergence of $u_\eps$ in $H^1_{\rm{loc}}$. From Theorem 
\ref{th:energy_est} and Theorem \ref{th:reg_negative}, we first have that for any $\delta>0$, there exists a constant $c>0$ 
depending on $n, E(u_0)$ and $M$ such that
$$
e_\eps(u_\eps)(x,t)=\frac12 (\eps|\partial_t u_\eps|^2+|\nabla u_\eps|^2)(x,t)\le c, \ \forall  x\in M,  t\ge \delta.
$$
This, combined with $\displaystyle\int_0^\infty \int_M |\partial_t u_\eps|^2\le E(u_0)$, implies that
for any $x_0\in M$ and $t_0\ge \delta$, $0<r<r_0=\min\{i_M, \delta^{\frac{1}{n+2}}\}$,
\begin{equation}\label{oscillation-est}
\abs{u_\eps(x,t)-u_\eps(y,s)}\le cr, \ \ \forall x, y\in B_r(x_0), \ \forall s,t:  t_0-r^{n+2}<s<t< t_0+r^{n+2}.
\end{equation}
To see \eqref{oscillation-est}, we estimate
\begin{align*}
\abs{u_\eps(x,t)-u_\eps(y,s)}
&\le \abs{u_\eps(x,t)-\fint_{B_r(x_0)}u_\eps(w,t)\, \d w}+ \abs{u_\eps(y,s)-\fint_{B_r(x_0)}u_\eps(w,s)\,\d w}\\
&\quad+\abs{\fint_{B_r(x_0)}u_\eps(w,t)\,\d w-\fint_{B_r(x_0)}u_\eps(w,s)\,\d w}\\
&\le \big(\|\nabla u_\eps(\cdot, t)\|_{L^\infty(M)}+\|\nabla u_\eps(\cdot, s)\|_{L^\infty(M)}\big)r\\
&\quad+\Big(\int_s^t\fint_{B_r(x_0)}\abs{\partial_tu_\eps}^2(w,\tau)\,\d w\d \tau\Big)^\frac12 \abs{t-s}^\frac12\\
&\le cr+c\frac{\sqrt{\abs{t-s}}}{r^{\frac{n}2}}\le cr.
\end{align*}

It follows from \eqref{oscillation-est} and Theorem 3.3 that we may assume that 
there exists a map $u: M\times (0, \infty)\to N$ such that for any $0<\delta<T<\infty$ and any ball $B\subset M$,
after passing to a subsequence, 
$$
u_\eps\rightharpoonup u \ \ \textrm{in}\ \ H^1(B\times (\delta, T)),  \ u_\eps\rightarrow u \ \ \textrm{in}\ \  C^0(B\times (\delta, T)).
$$

Now we want to show that
\begin{equation}\label{h1-conv}
\nabla u_\eps\to\nabla u \ \ \textrm{in} \ \ L^2(B\times (\delta, T)).
\end{equation}
It suffices to show $\nabla u_\eps$ is a Cauchy sequence in  $L^2(B\times (\delta, T))$. For this, 
let $\psi\in C_0^\infty(B\times (\delta, T))$ and let $\eps, \eps'\to 0$, by testing the equations for $u_\eps$ and $u_{\eps'}$
by $(u_\eps-u_{\eps'})\psi^2$ we obtain
\begin{align*}
&\iint_{M\times \R_+} \abs{\nabla (u_\eps-u_{\eps'})}^2\psi^2\Vg \d t\\
&=-\iint_{M\times \R_+} \partial_t (u_\eps-u_{\eps'}) (u_\eps-u_{\eps'})\psi^2\Vg \d t-\iint_Q (u_\eps-u_{\eps'})\cdot\nabla (u_\eps-u_{\eps'})\nabla\psi^2 \Vg \d t\\
&\quad-\iint_{M\times \R_+}(\eps\partial_t u_\eps-\eps'\partial_t u_{\eps'})\cdot\big(\partial_t(u_\eps-u_{\eps'})\psi^2+(u_\eps-u_{\eps'})\partial_t\psi^2\big)\Vg \d t\\
&\quad+\iint_{M\times \R_+} \big(\eps A(u_\eps)(\partial_t u_\eps, \partial_t u_\eps)-\eps' A(u_{\eps'})(\partial_t u_{\eps'}, \partial_t u_{\eps'})\big)(u_\eps-u_{\eps'})\psi^2\Vg \d t\\&\quad+\iint_{M\times\R_+}\big(A(u_\eps)(\nabla u_\eps,
\nabla u_\eps)-A(u_{\eps'})(\nabla u_{\eps'},
\nabla u_{\eps'})\big)(u_\eps-u_{\eps'})\psi^2\Vg \d t\\
&={A_1 + A_2 + A_3 + A_4 + A_5}
\end{align*}
It is not hard to check that as $\eps, \eps'\to 0$,
$$
|A_1|\leq c (\|\partial_t u_\eps\|_{L^1(B\times (\delta, T))}+\|\partial_t u_{\eps'}\|_{L^1(B\times (\delta, T)})\|u_\eps-u_{\eps'}\|_{C^0(B\times (\delta, T))}
\to 0,
$$
$$
|A_2|\leq c (\|\nabla u_\eps\|_{L^1(B\times (\delta, T))}+\|\nabla u_{\eps'}\|_{L^1(B\times (\delta, T)})\|u_\eps-u_{\eps'}\|_{C^0(B\times (\delta, T))}
\to 0,
$$
\begin{align*}
|A_3|&\le c(\eps\|\partial_t u_\eps\|_{L^2(B\times (\delta, T))}+\eps'\|\partial_t u_{\eps'}\|_{L^2(B\times (\delta, T)})(\|\partial_t u_\eps\|_{L^2(B\times (\delta, T))}+\|\partial_t u_{\eps'}\|_{L^2(B\times (\delta, T)})\\
&+c(\eps\|\partial_t u_\eps\|_{L^1(B\times (\delta, T))}+\eps'\|\partial_t u_{\eps'}\|_{L^1(B\times (\delta, T)})\|u_\eps-u_{\eps'}\|_{C^0(B\times (\delta, T))}\\
&\le c(\eps+\eps')\to 0,
\end{align*}
\begin{align*}
|A_4|&\le c\Big(\eps\|\nabla u_\eps\|_{L^2(B\times (\delta, T))}^2+\eps'\|\nabla u_{\eps'}\|_{L^2(B\times (\delta, T))}^2\Big))\|u_\eps-u_{\eps'}\|_{C^0(B\times (\delta, T))}\\
&\le c(\eps+\eps')\to 0
\end{align*}
and
\begin{align*}
|A_5|&\le c\Big(\|\nabla u_\eps\|_{L^2(B\times (\delta, T))}^2+\|\nabla u_{\eps'}\|_{L^2(B\times (\delta, T))}^2\Big))\|u_\eps-u_{\eps'}\|_{C^0(B\times (\delta, T))}\\
&\le c\|u_\eps-u_{\eps'}\|_{C^0(B\times (\delta, T))}\to 0.
\end{align*} 
Thus \eqref{h1-conv} holds.

It follows from \eqref{h1-conv} that 
$A(u_\eps)(\nabla u_\eps, \nabla u_\eps)\rightarrow A(u)(\nabla u, \nabla u)$ in $L^1(B\times (\delta, T))$.
On other hand, it is easy to see that
$\eps\big(\partial_t^2 u_\eps+A(u_\eps)(\partial_t u_\eps, \partial_t u_\eps)\big)\rightharpoonup 0$ in the sense of distributions. 
Hence, by passing the limit $\eps\to 0$ in \eqref{wed-hmf11}, we obtain that
$$
\partial_t u-\Delta u=A(u)(\nabla u,\nabla u) 
$$
in $M\times (0,\infty)$, since $B\subset M$, $\delta, T>0$ are arbitrary.  It is also not hard to verify that
$u=u_0$ at $t=0$. The uniform Lipschitz regularity of $u(\cdot, t)$, for $t\ge\delta>0$, follows from Lemma 4.4. both higher order
regularity and the uniqueness
of $u$ follows from the well-known theory on heat flow of harmonic maps to non positively curved manifolds. This completes the proof
of Theorem 1.3. \qed
}

\bigskip

Finally we can prove Theorem \ref{th:ES-optimal_control_intro}, namely that given ${u}_0\in H^1(M,N)\cap C^\infty(M, N)$ there exists a (unique) harmonic map in the homotopy class of ${u}_0$. 
The classical proof of Eells-Sampson uses the harmonic map heat flow to deform ${u}_0$ into a harmonic map. 
Here we use the EVI flow of the value function $V_\eps$ and the fact that the minimizers of $V_\eps$ are indeed minimizers of $E$ (hence minimizing harmonic maps). 
We have the following

\begin{theorem}
\label{th:ES-optimal_control}
Let $N$ be a compact Riemannian manifold with nonpositive sectional curvature and let ${u}_0\in H^1(M, N)\cap C^\infty(M,N)$.
 Then, given $u_\eps \in \argmin\left\{I_\eps[v],\ v\in \mathfrak{V}_{{u}_0}\right\}$ there holds
 \[
 \forall \eps>0\quad u_\eps(t)\xrightarrow{t\to +\infty}u_\infty \qquad \textrm{ in } L^2_{\textrm{loc}}(M, N), 
 \]
 with $u_{\infty}$ the unique minimizing harmonic map in the homotopy class of ${u}_0$.
\end{theorem}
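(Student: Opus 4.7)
The plan is to exploit that, for each $\eps>0$, the minimizer $u_\eps$ is the EVI gradient flow of the geodesically convex value function $V_\eps$ (Theorem \ref{th:evi_V} and Proposition \ref{prop:convex_V}), and to combine this with the identification of minimum points of $V_\eps$ with minimum points of $E$ furnished by Corollary \ref{cor:minV=minE}. This strategy deliberately avoids any circular appeal to Eells--Sampson and produces the limiting harmonic map directly from the long-time behavior of the flow at the $\eps$-level.

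First I would extract a candidate limit by dissipation and compactness. Starting from the curve-of-maximal-slope identity \eqref{eq:cmsV} and integrating in $t$ yields
\[
\int_0^\infty \left(\tfrac12\abs{u_\eps'}^2(t)+\tfrac12\abs{\tilde{\partial} V_\eps}^2(u_\eps(t))\right)\d t \le V_\eps(u_0)\le E(u_0)<\infty,
\]
so there exists a sequence $t_n\to\infty$ along which $\abs{\tilde{\partial} V_\eps}(u_\eps(t_n))\to 0$. By the Hamilton--Jacobi equation \eqref{eq:HJ-V} this gives $E(u_\eps(t_n))-V_\eps(u_\eps(t_n))\to 0$. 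Combining the uniform spatial Lipschitz estimate \eqref{gradient_est} with the energy bound $\sup_t E(u_\eps(t))\le E(u_0)$ provides pre-compactness of $\{u_\eps(t_n)\}$ in $L^2_{\textrm{loc}}\cap C^0_{\textrm{loc}}(M,N)$; a limit point $u_*$ satisfies $\abs{\tilde{\partial} V_\eps}(u_*)=0$ by lower semicontinuity of the slope, and hence, by geodesic convexity of $V_\eps$ (Proposition \ref{prop:convex_V}), is a minimum of $V_\eps$. Corollary \ref{cor:minV=minE} then yields $E(u_*)=\min E=V_\eps(u_*)$, so $u_*$ is a minimizing harmonic map.

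Next I would identify $u_*$ with a unique $u_\infty$ and upgrade to full convergence. Since $u_\eps\in C^\infty(M\times(0,\infty),N)$ (Theorem \ref{th:ex_minimizer}) and the maps $u_\eps(\cdot,t)$ enjoy the uniform spatial Lipschitz bound of Theorem \ref{th:reg_negative}, the trajectory $s\mapsto u_\eps(\cdot,s)$ provides a continuous homotopy from $u_0$ to $u_\eps(\cdot,t_n)$; the $C^0_{\textrm{loc}}$ convergence then places $u_*$ in the homotopy class of $u_0$. The classical Hartman uniqueness of minimizing harmonic maps with NPC target in a prescribed homotopy class identifies $u_*$ with a single $u_\infty$ that does not depend on the subsequence nor on $\eps$. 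Finally, testing the EVI \eqref{eq:evi_V} at $v=u_\infty$ and using $V_\eps(u_\eps(t))\ge V_\eps(u_\infty)$ yields
\[
\frac{\d}{\d t}\,d_2^2(u_\eps(t),u_\infty)\le 0,
\]
so $t\mapsto d_2(u_\eps(t),u_\infty)$ is non-increasing and the subsequential convergence along $t_n$ is promoted to the full limit $u_\eps(t)\xrightarrow{t\to\infty}u_\infty$ in $L^2_{\textrm{loc}}(M,N)$.

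The main obstacle is the homotopy-class preservation step: one must transfer $L^2_{\textrm{loc}}$-information to a topology strong enough to preserve the homotopy class on the limit. This is precisely where the uniform spatial Lipschitz bound \eqref{gradient_est} becomes essential, and is the reason why the NPC smoothing produced by Theorem \ref{th:reg_negative} cannot be bypassed. A secondary delicate point is the lower semicontinuity of the descending slope $\abs{\tilde{\partial}V_\eps}$ at $u_*$, which follows from the geodesic convexity of $V_\eps$ in the CAT$(0)$-structured metric space $(L^2(M,N),d_2)$ together with standard metric gradient-flow machinery as developed in \cite[Chapter 1]{ags2005}.
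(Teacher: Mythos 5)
Your proof is correct and reaches the same conclusion via the same overall architecture—identify the long-time limit as a minimizer of $V_\eps$, use Corollary \ref{cor:minV=minE} to upgrade to a minimizing harmonic map, preserve the homotopy class via a uniform $C^0$-bound, and invoke uniqueness for NPC targets—but at one key step you use a different ingredient than the paper. Where you extract a sequence $t_n\to\infty$ with $\abs{\tilde\partial V_\eps}(u_\eps(t_n))\to 0$ from the dissipation integral and then invoke lower semicontinuity of the slope, the paper instead applies the Brezis-type regularizing estimate for EVI flows of geodesically convex functionals, $V_\eps(u_\eps(t))\le V_\eps(v)+d_2^2(u_0,v)/(2t)$, which gives directly that $\limsup_{t\to\infty} V_\eps(u_\eps(t))\le \inf V_\eps$ and avoids the slope argument altogether. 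The paper's route is slightly more robust because it sidesteps the lower-semicontinuity-of-slope step, which is delicate: for a geodesically convex functional the local slope is lower semicontinuous along a sequence $u_n\to u$ only together with convergence of the functional values $V_\eps(u_n)\to V_\eps(u)$ (see, e.g., \cite[Theorem 2.4.9]{ags2005}), a condition you do not explicitly verify. It can be supplied here—$t\mapsto V_\eps(u_\eps(t))$ is nonincreasing and bounded below, and one can argue $V_\eps(u_\eps(t_n))\to V_\eps(u_*)$ from the EVI structure—but this should be made explicit. Apart from that, your closing step, testing the EVI with $v=u_\infty$ to get monotonicity of $d_2^2(u_\eps(t),u_\infty)$ and thereby upgrade subsequential to full convergence, is cleaner than what the paper leaves implicit (the paper relies only on uniqueness of the subsequential limit), and your use of the uniform gradient bound \eqref{gradient_est} together with Ascoli--Arzel\`a for the homotopy step parallels the paper's use of Lemma \ref{lemma:weak_harnack}.
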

\begin{proof}
We work at a fixed $\eps>0$.
We take a sequence $t_k\nearrow +\infty$ and we set, for $t\in (0,1)$, 
\[
u_k^{(\eps)}(t) = u_\eps(t+t_k).
\]
The sequence $u_k^{(\eps)}$ verifies, uniformly in $\eps>0$, (see Theorem \ref{th:energy_est})
\[
\begin{split}
\sup_{t\in (0,1)}E(u_{k}^{(\eps)}(t)) = \sup_{t\in (0,1)}E(u_\eps(t+t_k))\le E({u}_0)\\
\int_{0}^1\norm{\partial_t u_k^{(\eps)}(t)}^2\d t = \int_{t_k}^{1+t_k}\norm{\partial_t u_\eps(t)}^2\d t\le \int_{t_k}^{+\infty}\norm{\partial_t u_\eps(t)}^2\d t.
\end{split}
\]
Therefore, there exists a subsequence of $t_k$, denoted as itself,  and $u_\infty^{(\eps)}\in L^2(0,1; H^1(M, N))$ such that 
for any ball $B$ in $M$ 
\[
u_{{k}}^{(\eps)}\xrightarrow{k\nearrow +\infty}u_{\infty} ^{(\eps)}\qquad \textrm{ in } C^0([0,1];L^2(B, N)).
\]
Note that, by construction, $\partial_t u_\infty^{(\eps)}\equiv 0$ and therefore we identify $u_\infty^{(\eps)}$ with an element in $H^1(M, N)$ 
with the same name. 

Now, since $u_\eps$ is an EVI flow for the 
convex energy 
$V_\eps$ it verifies (see \cite[Theorem 4.04]{ags2005})
\[
V_\eps(u_\eps(t))\le V_\eps(v) + \frac{d^2(\bar{u},v)}{2t} \qquad \forall t\in (0,+\infty), \,\,\forall v\in H^1(M, N). 
\]
Thus,  
\[
\limsup_{t\to +\infty}V_\eps(u_\eps(t))\le V_\eps(v) \qquad \forall v\in H^1(M, N), 
\]
that is
\[
V_\eps(u_\infty^{(\eps)})\le \liminf_{k\to +\infty}V_\eps(u_{\eps}(t+t_{k}))\le \limsup_{t\to +\infty}V_\eps(u_\eps(t))\le V_\eps(v) \qquad \forall v\in H^1(M, N),
\]
namely $u_\infty^{(\eps)}$ is a minimizer for $V_\eps$ and hence for $E$, thanks to Corollary \ref{cor:minV=minE}.

Now we show that $u_\infty^{(\eps)}$ is actually independent of $\eps$. 
First of all note that Lemma \ref{lemma:weak_harnack} gives that for $x\in M$ and $t\in (0,1)$
\[
\begin{split}
&\eps\abs{\partial_t u_k^{(\eps)}(x,t)}^2 + \abs{\nabla u_k^{(\eps)}(x,t)}^2 = e_\eps(u_{k}^{(\eps)}(x,t))\\
& = e_\eps(u_\eps(x,t+t_k))\le \frac{c(n)}{r^{n+2}}\iint_{P_{r}(x,t+t_k)}e_\eps(u_\eps(y,s))\Vg\d s\le c(n) \left(\frac{\eps}{r^{n+2}}+\frac{1}{r^n}\right)E(\bar{u}),
\end{split}
\]
and therefore Ascoli-Arzelà gives that $u_{k}^{(\eps)}$ actually converges to $u_{\infty}^{(\eps)}$ locally uniformly in $M\times (0,\infty)$. 
Consequently, there exists some $\bar k$ such that for $k\ge \bar{k}$ and for any $(x,t)$ in $M\times (0,1)$ we have that 
\[
d_N(u_{\eps}(x,t+t_{k}),u_\infty^{(\eps)}(x)) \le i_N,
\]
where $i_N$ is the injectivity radius of $N$. As a consequence for $k\ge \bar{k}$, $u_\eps(\cdot,t+t_{k})$ is homotopic to $u_\infty^{(\eps)}(\cdot)$ via the unique geodesic in $N$ connecting them. 
Since $u_\eps(\cdot,t+t_{k})$ is homotopic to ${u}_0$ via the EVI flow we conclude that $u_{\infty}^{(\eps)}$ is homotopic to ${u}_0$. 
Now, as $u^{(\eps)}_{\infty}$ is a harmonic map in the homotopy class of ${u}_0$, it is unique thanks to 
{convexity}. Therefore it is independent of $\eps$. 
\end{proof}

\begin{appendix}

\section{Basics on Sobolev spaces, energies with non smooth targets and absolutely continuous curves}
\label{appendixSpaces}
In this appendix, we expand on the definition of the energy and its properties in the case of metric targets. Since the seminal works of Korevaar and Schoen \cite{KS}, there has been an increasing interest in the development of suitable energies when the domain and/or the target are non smooth spaces; or more generally Sobolev type spaces in such context. 

Let $(X,d)$ be a CAT(0)-space. Let $\Omega$ be an open set of the smooth manifold $M$ (or $M \times (0,\infty)$). 
Following \cite{KS}, we have the following definitions. 
\begin{definition}\label{L2space}
A measurable function $f:\Omega \to X$ is an $L^2$ map (or $f \in L^2(\Omega,X)$) if for some $P \in X$ (hence all $P \in X$), one has 
$$
\int_\Omega d^2(f(x), P)\,dx <\infty. 
$$
This induces naturally a metric on the space $L^2(\Omega,X)$. 
\end{definition}

We now introduce the space of Sobolev maps from $\Omega$ to $X$ in the spirit of \cite{KS} (see \cite{GS} as well). 
 \begin{definition}[Korevaar-Schoen energy]\label{defksenery}
Let $f:\Omega \subset M \to X$ be a Borel map and let $r \in (0, \infty)$.
\begin{enumerate} 
\item Define the \textit{energy density at scale $r$ of $f$ at $x \in \Omega$}, denoted by $\mathrm{e}^f_{r}(x)$, by
\begin{equation}
\mathrm{e}^f_{r}(x):=\left(\frac{n+2}{|B_r(x)|)}\int_{B_r(x) \cap A}\frac{d(f(x), f(y))^2}{r^2}\,\, dy\right)^{1/2}.
\end{equation}
\item Define the \textit{Korevaar-Schoen energy at scale $r$}, denoted by $\mathcal{E}_{\Omega, r}^f$, by
\begin{equation}
\mathcal{E}_{\Omega, r}^f:=\int_\Omega\left(\mathrm{e}^f_{r}(x)\right)^2\,\,dx.
\end{equation}
\item Define the \textit{Korevaar-Schoen energy}, denoted by $E(f)$, by
\begin{equation}
E(f):= \frac{1}{2}\limsup_{r \to 0^+}\mathcal{E}_{\Omega, r}^f.
\end{equation}
\end{enumerate}
\end{definition}

Several remarks are in order. In the previous definition, we consider a Riemannian domain in the manifold $M$ (or $M \times (0,\infty)$) since we are mainly interested in harmonic mappings from a smooth manifold into an NPC space. It is clear that such definition extends more generally to domains which are metric measured spaces. The previous definition allows to obtain 
\begin{definition}
A measurable map $f: \Omega \to X$ is a Sobolev map $f \in H^1(\Omega,X)$ if $f \in L^2(\Omega,X)$ and additionally  $f$ has finite energy in the sense of definition \ref{defksenery}. 

\end{definition}

\begin{remark}
In \cite{KS}, Korevaar and Schoen defined the Sobolev maps of $W^{1,p}(\Omega,X)$ with $p>1$. It should be clear from the previous definitions how to proceed to define such scales of spaces. 
\end{remark}

We would like to emphasize that Definition \ref{defksenery} is actually a convergence in measure result of the approximate energy $\mathcal{E}_{\Omega, r}^f$ on the space of Lipschitz functions with compact support $f:\Omega \to X$ as $r \to 0^+$. More precisely, the so-called {\sl energy density measure} $\mathrm{e}^f_{r}\,\Vg$ converges in the sense of measures (here we denote $\Vg$ the Riemannian measure on $M$ ) to the measure $|\nabla f(x)|^2\,\Vg$, and by definition we have 
$$
E(f):= \frac{1}{2}\int_{\Omega} |\nabla f(x)|^2\,\Vg. 
$$  
Therefore we define the Dirichlet energy 
\begin{equation}
\label{eq:diri_appe}
E(v):=
\begin{cases}
 \displaystyle\frac{1}{2}\int_\Omega \abs{\nabla v}^2 \Vg, &\qquad v\in H^1(\Omega;X)\\
  + \infty &\qquad \textrm{ otherwise in } L^2(\Omega, X)
  \end{cases}
\end{equation}
with domain $D(E) = H^1(\Omega,X)$. In this paper, we will then always use the notation above for the energy whenever the target is smooth or not. 

Another important notion to be able to make inner variations of the energy is to define {\sl directional derivatives}. Denote by $\Gamma (T \overline{\Omega})$ the set of smooth vector fields on $\Omega$. Denote by $\abs {f_*(V)}^2$ the energy density of the directional derivative in $H^1(\Omega,X)$ of the map $f$ in the direction $V \in \Gamma (T \overline{\Omega})$, as defined in \cite[Section 1.8]{KS}.
Then one can define a bilinear symmetric continuous nonnegative tensorial operator by 
\[
\langle f_*(V), f_*(W)\rangle =\frac12 |f_*(V+W)|^2-\frac12 |f_*(V-W)|^2, \ \forall V, W\in \Gamma (T \overline{\Omega})
\]
This generalizes the notion of pullback metric by maps into manifolds. It is then natural to define the following quantities: 
\begin{itemize}

\item Denote $|\partial_t f|^2$ and $|\partial_{x_i} f|^2$ for $|f_*(\frac{\partial}{\partial t})|^2$ and $|f_*(\frac{\partial}{\partial x_i})|^2$ respectively, for $i=1,\cdots, n$,
where $\{\frac{\partial}{\partial x_i}\}_{i=1}^n$ denotes the local coordinate frame of $(M,g)$,
\item Denote $\partial_t f\cdot \partial_{x_i} f=\langle f_*(\frac{\partial}{\partial t}), f_*(\frac{\partial}{\partial x_i})\rangle$ for $i=1,\cdots, n$
\item Denote $\partial_{x_i} f\cdot \partial_{x_j} f=\langle f_*(\frac{\partial}{\partial x_i}), f_*(\frac{\partial}{\partial x_i})\rangle$ for $1\le i,j\le n$
and finally $|\nabla f|^2=\sum_{i, j} g^{ij} \partial_{x_i} f\cdot \partial_{x_j} f$. 

\end{itemize}

{
When dealing with gradient flow equations in metric spaces the notion of absolutely continuous curve and of metric derivative is particularly important. 
We have the following (see e.g. \cite{ags2005}..)
\begin{definition}[Absolutely continuous curves]
\label{def:AC_curve}
We let $Y$ be a complete metric space. 
A curve $\gamma:[0,T]\to X$ is said to be absolutely continuous if there exists $f\in L^1(0,T)$ such that 
\begin{equation}
\label{eq:AC_curve}
d(\gamma(t),\gamma(s))\le \int_{s}^t f(r)\d r\qquad \forall 0<s<t<T.
\end{equation}
The space $AC^2(0,T;Y)$ is the space of absolutely continuous curves for which there exists $f\in L^2(0,T)$ that verifies
\eqref{eq:AC_curve}.
\end{definition}

\begin{theorem}
\label{th:metric_derivative}
For any curve $\gamma\in AC^2(0,T;Y)$ the limit 
\[
\abs{\gamma'}(t):= \lim_{h\to 0}\frac{d(\gamma(t+h),\gamma(t))}{h}
\]
exists for almost any $t\in (0,T)$ and the function $t\mapsto \abs{\gamma'}(t)$ is the least function $f$ for which 
\eqref{eq:AC_curve} holds, namely
\[
\abs{\gamma'}(t)\le f(t)\qquad \textrm{a.e.  } \forall f\,\,\,\textrm{ satisfying} \,\,\,\,\eqref{eq:AC_curve}.
\]  
\end{theorem}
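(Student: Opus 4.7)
The plan is to produce the classical Ambrosio--Gigli--Savaré construction of the metric derivative: identify a countable family of $1$-Lipschitz "test functions" on $[0,T]$ obtained by composing $\gamma$ with the distances to a countable dense set, take the pointwise supremum of their derivatives, and then recognize that supremum as the metric derivative through a two-sided sandwich argument.

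First I would fix $f \in L^2(0,T)$ realizing the absolute continuity bound \eqref{eq:AC_curve} and pick a countable dense set $\{s_n\}_{n\in\mathbb{N}} \subset [0,T]$. Set $\varphi_n(t) := d(\gamma(t),\gamma(s_n))$. The reverse triangle inequality gives $|\varphi_n(t) - \varphi_n(r)| \le d(\gamma(t),\gamma(r)) \le \int_{r}^{t} f(\tau)\,d\tau$ for $r<t$, so each $\varphi_n$ is absolutely continuous on $[0,T]$ with $|\varphi_n'(t)| \le f(t)$ at every Lebesgue point. Define
\[
m(t) := \sup_{n \in \mathbb{N}} |\varphi_n'(t)|,
\]
which is measurable and satisfies $m(t) \le f(t)$ for almost every $t$.

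Next I would establish the two one-sided estimates that together identify $m$ with the metric derivative. For the lower bound, note that for every $n$ and almost every $t$,
\[
|\varphi_n'(t)| \;=\; \lim_{h \to 0}\frac{|\varphi_n(t+h)-\varphi_n(t)|}{|h|} \;\le\; \liminf_{h\to 0}\frac{d(\gamma(t+h),\gamma(t))}{|h|},
\]
again by the reverse triangle inequality, so passing to the supremum in $n$ yields $m(t) \le \liminf_{h\to 0} d(\gamma(t+h),\gamma(t))/|h|$ a.e. For the upper bound, the key step is the global inequality
\[
d(\gamma(s),\gamma(t)) \;\le\; \int_{s}^{t} m(\tau)\,d\tau \qquad \forall\, 0 < s < t < T.
\]
To prove this I would extract a subsequence $s_{n_k} \to s$ from the dense set; then $\varphi_{n_k}(s) = d(\gamma(s_{n_k}),\gamma(s)) \to 0$ and $\varphi_{n_k}(t) \to d(\gamma(t),\gamma(s))$, while the fundamental theorem of calculus for each absolutely continuous $\varphi_{n_k}$ gives $\varphi_{n_k}(t) - \varphi_{n_k}(s) \le \int_s^t |\varphi_{n_k}'(\tau)|\,d\tau \le \int_s^t m(\tau)\,d\tau$. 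Letting $k\to\infty$ yields the desired bound. Lebesgue's differentiation theorem applied to the integrable function $m$ then gives $\limsup_{h \to 0} d(\gamma(t+h),\gamma(t))/|h| \le m(t)$ at every Lebesgue point of $m$.

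Combining the two one-sided estimates shows that the limit defining $|\gamma'|(t)$ exists a.e.\ and equals $m(t)$. Minimality is immediate: since $m \le f$ a.e.\ for \emph{every} $f$ satisfying \eqref{eq:AC_curve}, we have $|\gamma'|(t) \le f(t)$ a.e., while $|\gamma'|$ itself satisfies \eqref{eq:AC_curve} because $d(\gamma(s),\gamma(t)) \le \int_s^t m(\tau)\,d\tau = \int_s^t |\gamma'|(\tau)\,d\tau$. The main technical obstacle is the density approximation step leading to $d(\gamma(s),\gamma(t)) \le \int_s^t m\,d\tau$: one must exchange a supremum (hidden in the definition of $m$) with an integral, and the only clean way I see is the pointwise selection $s_{n_k} \to s$ combined with continuity of $\gamma$ inherited from absolute continuity; all other routes (e.g.\ partitioning $[s,t]$ by points of $\{s_n\}$) require extra care because $m$ is defined as a supremum rather than a sum.
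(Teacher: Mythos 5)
The paper does not actually prove this statement; Theorem~\ref{th:metric_derivative} is quoted as classical background with a pointer to \cite{ags2005}, and the understanding is that the reader will supply (or look up) exactly the construction you wrote. Your proof is a correct rendition of the standard Ambrosio--Gigli--Savar\'e argument: take the countable family $\varphi_n(t)=d(\gamma(t),\gamma(s_n))$, set $m=\sup_n|\varphi_n'|$, prove the sandwich $m\le \liminf d(\gamma(t+h),\gamma(t))/|h|$ and $d(\gamma(s),\gamma(t))\le\int_s^t m$, and invoke Lebesgue differentiation. Two small remarks worth keeping in mind: (i) your choice of a dense set in the \emph{parameter interval} $[0,T]$ rather than in $Y$ is a clean way to avoid invoking separability of the ambient space (the image $\gamma([0,T])$ is automatically separable since $\gamma$ is continuous), which matches how this is usually done; (ii) the display in the theorem writes the difference quotient with $h$ in the denominator, but since $d\ge 0$ the quotient must be understood with $|h|$, as you correctly wrote --- otherwise the two-sided limit would not exist. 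Everything else (measurability of $m$ as a countable sup, integrability of $m$ from $m\le f\in L^2\subset L^1$, the subsequence $s_{n_k}\to s$ in the upper bound, and the minimality argument) is exactly right.
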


Interestingly, there is a strict relation between $AC^2(0,T;Y)$ and $H^1(0,T;Y)$ in the sense of Korevaar \& Schoen, see Theorem 2.11 in \cite{GigliTyulenev}. 
In particular there holds that any curve in $AC^2(0,T;Y)$ belongs to $H^1(0,T;Y)$ and given $\gamma\in H^1(0,T;Y)$ its continuous representative belongs to $AC^2(0,T;Y)$ and 
 for almost any $t\in (0,T)$ 
\[
\abs{\partial_t \gamma(t)} = \abs{\gamma'}(t).
\]
We finally remark that in the running separability hypothesis on $X$ we will always tacitly identify measurable
``abstract curves'' ${\bf f}:(0,T)\to L^2(M, X)$ for $T\in [0,+\infty]$ with their ``concrete'' (measurable) representation 
$f:M\times (0,T)\to X$ given by ${\bf f}(t)(x) = f(x,t)$ for almost any $(x,t)\in M\times (0,T)$. We use the same notation accordingly. 
In particular, if ${\bf f}\in AC^2(0,T;L^2(M,X))$ we will have that 
\[
\abs{{\bf f}'}^2(t) :=\lim_{h\to 0}\frac{d({\bf{f}}(t+h),{\bf{f}}(t))}{h}=  \int_M\abs{f'}^2(t)\Vg,
\]
where $\abs{f'}:= \lim_{h\to 0}\frac{d_Xx(f(x,t+h),f(x,t))}{h}\in L^2(M\times (0,T))$
Moreover, the discussion above gives 
\[
\norm{\partial_t f(t)}^2=\int_M\abs{\partial_t f}^2\Vg =\int_M\abs{f'}^2(t)\Vg = \abs{f'}^2(t), \,\,\,\,\textrm{ a.e. in } (0,T).
\]
%
%
}

\section{Basics on gradient flows in metric spaces}
\label{appendixGF}
\end{appendix}
Given a metric space $(Y,d_Y)$ and an energy $E:Y\to (-\infty,+\infty]$ we recall the definition of slope (see \cite[Chapter 1]{ags2005})
\begin{definition}
\label{def:slope}
The slope of $E$ at $v\in D(E)$ is defined by 
\[
\abs{\partial E}(v):= \limsup_{w\to v}\frac{(E(v)-E(w))^+}{d_Y(v,w)},
\]
where $w\to v$ means that $d_Y(w,v)\to 0$. 
\end{definition}
We also recall that if $E$ is geodesically convex, then the slope is lower semicontinuous. 
Now we concentrate on CAT($0$) metric spaces and on geodesically convex energies. 
The following Theorem holds (see \cite{ags2005}, \cite{MS2020} and \cite{gigli_nobili21}). 
\begin{theorem}[Gradient flows in a CAT($0$) space]
Let $(Y,d_Y)$ be a CAT($0$) metric space and $E: Y\to (-\infty,+\infty]$ be a lower semicontinuous and geodesically convex energy. 
Let ${u}_0 \in D(E)$ and $u\in AC([0,+\infty);Y)$ such that $u(0) = u_0$. 
Then the following are equivalent
\begin{enumerate}
\item 
\begin{equation}
\label{eq:MSI_appendix}
-\frac{\d}{\d t}E(u(t))\ge \frac{1}{2}\abs{u'}^2(t) + \frac{1}{2}\abs{\partial E}^2(u(t))\qquad \textrm{ for a.a. }t>0;
\end{equation}
\item 
\begin{equation}
\label{eq:CR_appendix}
-\frac{\d}{\d t}E(u(t)) = \abs{u'}^2(t) = \abs{\partial E}^2(u(t))\qquad \textrm{ for a.a. }t>0;
\end{equation}
\item 
\begin{equation}
\label{eq:EVI_appendix}
\frac{1}{2}\frac{\d}{\d t}d^2_Y(u(t), v)+ E(u(t)) \le E(v)\qquad \textrm{ in } \mathscr{D}'(0,+\infty), \,\,\,\forall v\in D(E). 
\end{equation}
\end{enumerate} 
Moreover, the above relations are equivalent to the following: the right derivative $u'_{+}$ exists for any $t>0$ and  
\begin{equation}
\label{eq:subdiff_appendix}
\begin{cases}
u'_{+}(t)\in -\partial^{-}E(u(t)) \qquad \forall t>0\\
u(0) = u_0,
\end{cases}
\end{equation}
and $u'_{+}(t)$ is the element of minimal norm in $-\partial^{-}E(u(t))$ for $t>0$. 
\end{theorem}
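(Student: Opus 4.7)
The plan is to close the loop $(3)\Rightarrow(2)\Rightarrow(1)\Rightarrow(2)$ and then establish the deeper direction $(2)\Rightarrow(3)$, concluding with the identification in \eqref{eq:subdiff_appendix} via the metric subdifferential calculus of \cite{gigli_nobili21}. The enabling tool, proved for geodesically convex energies in \cite{ags2005}, is that on a CAT($0$) space the descending slope $\abs{\partial E}$ is a \emph{strong upper gradient} for $E$: along any absolutely continuous curve $u$ with $\int \abs{u'}^2<\infty$, the composition $t\mapsto E(u(t))$ is absolutely continuous and satisfies the chain rule
\begin{equation*}
\Bigl|\tfrac{d}{dt}E(u(t))\Bigr| \le \abs{\partial E}(u(t))\,\abs{u'}(t)\qquad\text{for a.e.\ } t>0.
\end{equation*}

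The implication $(2)\Rightarrow(1)$ is immediate. For $(1)\Rightarrow(2)$ I would combine the chain rule with Young's inequality
\begin{equation*}
\abs{\partial E}(u(t))\,\abs{u'}(t)\le \tfrac12\abs{u'}^2(t)+\tfrac12\abs{\partial E}^2(u(t)),
\end{equation*}
so that (1) forces equalities throughout and yields the identity (2). For $(3)\Rightarrow(2)$ I would follow the standard AGS argument \cite[Chapter 4]{ags2005}: testing EVI with $v=u(s)$ for $s$ close to $t$, and using the elementary bound $\bigl|\tfrac{d}{ds}\tfrac12 d_Y^2(u(s),w)\bigr|\le \abs{u'}(s)\,d_Y(u(s),w)$ together with the Lipschitzianity of $s\mapsto d_Y^2(u(s),u(t\pm h))$, yields the lower bound $-\tfrac{d}{dt}E(u(t))\ge \abs{u'}^2(t)$, which combined with the chain rule and Young's inequality closes everything into (2).

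The main obstacle is $(2)\Rightarrow(3)$, the direction in which the full CAT($0$) structure enters decisively. Fix $v\in D(E)$, $t>0$, and let $\gamma^s$ denote the unique constant speed geodesic from $u(t)$ to $v$, whose existence and uniqueness are guaranteed by CAT($0$). Geodesic convexity of $E$ gives $E(\gamma^s)\le (1-s)E(u(t))+sE(v)$; dividing by $s$ and sending $s\to 0^+$ yields the support inequality
\begin{equation*}
E(v)-E(u(t))\ge \partial^+_sE(\gamma^s)\big|_{s=0},
\end{equation*}
and the one-sided directional derivative on the right is bounded in absolute value by $\abs{\partial E}(u(t))\,d_Y(u(t),v)$ by the very definition of the slope. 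The characteristic $1$-convexity of $d_Y^2(\cdot,v)$ in CAT($0$), together with the identity $\abs{u'}(t)=\abs{\partial E}(u(t))$ supplied by (2) (which forces $u'_+(t)$ to be aligned, in the tangent cone, with the minus-gradient direction of $E$ at $u(t)$), identifies $\partial^+_sE(\gamma^s)|_{s=0}$ with $\tfrac12\tfrac{d}{dt}d_Y^2(u(t),v)$ up to sign, producing
\begin{equation*}
\tfrac12\tfrac{d}{dt}d_Y^2(u(t),v)+E(u(t))\le E(v),
\end{equation*}
which is precisely the EVI (3).

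For the final equivalence with \eqref{eq:subdiff_appendix} I would appeal directly to \cite{gigli_nobili21}: in a CAT($0$) space, the EVI guarantees the pointwise existence of the right metric derivative $u'_+(t)$ in the tangent cone of $Y$ at $u(t)$, and identifies $-u'_+(t)$ as the element of minimal norm in the metric subdifferential $\partial^-E(u(t))$, the latter being defined precisely through the family of support inequalities derived in the previous paragraph. The genuinely hard step remains $(2)\Rightarrow(3)$: both the uniqueness of geodesics in $Y$ and the $1$-convexity of $d_Y^2(\cdot,v)$ enter in an essential way, reflecting the fact that in merely geodesic spaces without the CAT($0$) upper curvature bound, curves of maximal slope need not satisfy the EVI.
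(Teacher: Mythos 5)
The paper does not actually supply a proof of this theorem; it cites \cite{ags2005}, \cite{MS2020}, and \cite{gigli_nobili21} and moves on. So the question is whether your sketch stands on its own.

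The implications $(2)\Rightarrow(1)$, $(1)\Rightarrow(2)$ (chain rule plus Young), and $(3)\Rightarrow(2)$ (testing EVI with $v=u(s)$, standard AGS) are all fine, modulo the usual regularity bookkeeping.

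The genuine gap is in $(2)\Rightarrow(3)$, and it is exactly where you yourself flag the ``main obstacle.'' The sentence ``the identity $|u'|(t)=|\partial E|(u(t))$ supplied by (2) \ldots\ forces $u'_+(t)$ to be aligned, in the tangent cone, with the minus-gradient direction of $E$ at $u(t)$'' is doing all the work, and it is not a consequence of anything you have established before that point --- it is essentially the theorem you are trying to prove restated in different words. In a Hilbert space, $|u'|=|\nabla E(u)|$ together with $\frac{d}{dt}E(u)=-|u'|^2$ and Cauchy--Schwarz does give $u'=-\nabla E(u)$; but in a metric tangent cone there is no inner product, and ``alignment'' must be given a precise meaning via the cone geometry and the metric subdifferential. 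Extracting that alignment from a scalar identity between a.e.\ defined metric derivatives and slopes is precisely the content of the Gigli--Nobili identification \eqref{eq:subdiff_appendix}, which you only invoke afterwards --- so the argument is circular as written. (There is also a smaller issue: condition $(2)$ only gives $|u'|(t)$ a.e., while the alignment claim is phrased in terms of the pointwise right derivative $u'_+(t)$; bridging those requires the Lipschitz continuity of $t\mapsto E(u(t))$ which one normally gets \emph{from} EVI or from the discrete scheme, not from $(2)$ alone.)

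The usual route to $(2)\Rightarrow(3)$ in this setting is indirect: one first proves that EVI solutions exist for every initial datum in $\overline{D(E)}$ (via the minimizing-movement scheme using the $1$-convexity of $d_Y^2$ --- this is where CAT($0$) enters decisively), then proves that curves of maximal slope for a geodesically convex energy are unique (via the EVI contraction plus a comparison argument), and concludes that the curve of maximal slope \emph{is} the EVI solution because they share the same initial datum. Alternatively, one develops the CAT($0$) tangent-cone calculus à la \cite{gigli_nobili21} and proves the alignment statement directly. Either way the step you elide is a theorem, not a one-line deduction; as written, the proposal has a gap there.
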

We refer to \cite{gigli_nobili21} for the definition of right derivative $u'_{+}(t)$ and of the subdifferential $-\partial^{-}E$. 
The Theorem above applies in particular when $E$ is the Korevaar-Schoen energy for maps defined in a Riemannian manifold $M$ and with values in a CAT($0$) space $X$. In this case $(Y, d_Y)= (L^2(M,X), d_2)$ and the Theorem says that once one shows the existence (e.g. via minimizing movements or, as in the present paper via WED functional) of a gradient flow solution, i.e. a curve satisfying one the three equivalent conditions \eqref{eq:MSI_appendix}-\eqref{eq:EVI_appendix}, then it satisfies the differential inclusion \eqref{eq:subdiff_appendix}.
In particular (see again \cite{gigli_nobili21}) the tension field (a.k.a the Laplacian) $\tau(v)$ of a Sobolev map $v$ with values
in a CAT($0$) metric space $(X, d)$ is characterized
 as the element of minimal norm in $-\partial^{-}E(v)$. As a result, when $X$ is a smooth 
Riemannian manifold $N$ isometrically embedded in $\R^L$ with non positive sectional curvature, 
the gradient flow solution is actually
the smooth solution of the heat flow of harmonic maps into $N$ and satisfies \eqref{eq:heat_flow_intro1}.

\section*{Acknowledgements}
F. Lin is partially supported by NSF DMS 2247773.
A. Segatti is partially supported by PRIN 2022 (Project no. 2022J4FYNJ), funded by MUR, Italy, and the European Union -- Next Generation EU, Mission 4 Component 1 CUP F53D23002760006 and by GNAMPA-INdAM.
Y. Sire  is partially supported by NSF DMS 2154219. 
C. Wang is partially supported by NSF DMS 2453789 and Simons Travel Grant TSM-00007723. 

\bigskip{\bf Added in Proof}: After this paper has been completed,  X. P. Zhu very recently informed the first author
of their preprint \cite{ZhangZhu2026}, where they obtained the Lipschitz regularity in both $x$ and $t$ by a different method {similar to that from their
previous work \cite{zhang-zhu}.
Shortly after, Lin and Wang found an alternate new proof of the same regularity result in \cite{LinWang2026} by extending
the Korevaar-Schoen techniques to the heat flows into CAT(0)-spaces}.

\bibliographystyle{plain}
\bibliography{wed_hmhf,wed_hmhf2}
\end{document}